\documentclass[12pt]{amsart}
\usepackage{amsthm,amsmath,amssymb,amscd,graphics,enumerate, stmaryrd,xspace,verbatim, epic, eepic,color,url}
\usepackage{enumitem}
\usepackage{chngcntr}

 \usepackage[mathscr]{eucal}
\usepackage{hyperref}

\usepackage[all]{xypic}
\SelectTips{cm}{}

\newtheorem{thm}{Theorem}[subsection]
\newtheorem{theorem}[thm]{Theorem}
\newtheorem{defn}[thm]{Definition}
\newtheorem{prop}[thm]{Proposition}
\newtheorem{proposition}[thm]{Proposition}
\newtheorem{lemma}[thm]{Lemma}
\newtheorem{corollary}[thm]{Corollary}
\theoremstyle{remark}
\newtheorem{remark}[thm]{Remark}
\newtheorem{conv}[thm]{Convention}
\newtheorem*{construction}{Construction}

\theoremstyle{definition}
\newtheorem{definition}[thm]{Definition}
\newtheorem{assumption}[thm]{Assumption}
\newtheorem{example}[thm]{Example}

\newcommand{\marg}[1]{}
\newcommand{\Barbara}[1]{\marg{Barbara: #1}}
\newcommand{\Dan}[1]{\marg{Dan: #1}}
\newcommand{\Jonathan}[1]{\marg{Jonathan: #1}}
\newcommand{\Chuck}[1]{\marg{Chuck: #1}}

\newcommand{\cA}{{\mathcal{A}}}
\newcommand{\cB}{{\mathcal{B}}}
\newcommand{\cE}{{\mathcal{E}}}
\newcommand{\eps}{\epsilon}
\newcommand{\cO}{{\mathcal{O}}}
\renewcommand{\cL}{{\mathcal{L}}}
\newcommand{\cM}{{\mathcal{M}}}
\newcommand{\cU}{{\mathcal{U}}}

\newcommand{\cT}{{\mathcal{T}}}
\newcommand{\thickslash}{\fatslash\,}
\newcommand{\cX}{{\mathcal{X}}}
\newcommand{\Id}{{\mathrm{id}}}
\renewcommand{\cD}{{\mathcal{D}}}
\newcommand{\cW}{{\mathcal{W}}}
\newcommand{\smooth}{{\mathrm{smooth}}}
\newcommand{\ocM}{{\overline{\mathcal{M}}}}

\newcommand{\bC}{\mathbf{C}}

\newcommand{\bN}{\mathbf{N}}

\newcommand{\bZ}{\mathbf{Z}}

\newcommand{\bc}{\mathbf{c}}
\newcommand{\br}{\mathbf{r}}
\newcommand{\bs}{\mathbf{s}}

\newcommand{\bDelta}{{\boldsymbol\Delta}}
\newcommand{\bmu}{{\boldsymbol\mu}}

\newcommand{\fC}{\mathfrak{C}}
\newcommand{\fM}{{\mathfrak{M}}}
\newcommand{\fN}{{\mathfrak{N}}}

\newcommand{\fT}{{\mathfrak{T}}}

\newcommand{\fm}{\mathfrak{m}}

\newcommand{\fr}{\mathfrak{r}}

\newcommand{\hA}{\widehat{A}}
\newcommand{\hL}{\widehat{L}}
\newcommand{\hS}{\widehat{S}}
\newcommand{\hs}{\hat{s}}
\newcommand{\hT}{\widehat{T}}

\newcommand{\oalpha}{\overline{\alpha}}
\newcommand{\opi}{\overline{\pi}}
\newcommand{\oA}{\overline{A}}
\newcommand{\oC}{\overline{C}}
\newcommand{\oE}{\overline{E}}
\newcommand{\oM}{\overline{M}}

\newcommand{\om}{\overline{m}}
\newcommand{\ox}{\overline{x}}
\newcommand{\oX}{\overline{X}}

\newcommand{\osX}{\overline{\mathscr{X}}}
\newcommand{\osY}{\overline{\mathscr{Y}}}

\newcommand{\sA}{\scr{A}}
\newcommand{\sB}{\scr{B}}
\newcommand{\sC}{\mathscr{C}}
\newcommand{\sD}{\scr{D}}
\newcommand{\sE}{\mathscr{E}}
\newcommand{\sL}{\scr{L}}
\newcommand{\sP}{\scr{P}}
\newcommand{\sS}{\scr{S}}
\newcommand{\sT}{{\mathscr{T}}}

\newcommand{\sX}{\scr{X}}
\newcommand{\sY}{\scr{Y}}
\newcommand{\sZ}{\mathscr{Z}}

\newcommand{\tS}{\widetilde{S}}
\newcommand{\ts}{\widetilde{s}}

\newcommand{\tY}{\widetilde{Y}}

\newcommand{\toset}{\rm to}

\newcommand{\BGm}{\cB\mathbb{G}_m}

\newcommand{\uHom}{\underline{\Hom}}

\usepackage{ifthen}

\newcommand{\setlike}[4]{
  { \left#3 
      \ifthenelse{\not\equal{#1}{}}
      {\left. #1 \vphantom{#2} \: \right| {#2} \:}
      {#2}
    \right#4
  }
}

\providecommand{\set}[2][]{{\setlike{#1}{#2}{\{}{\}}}}

\RequirePackage{ifthen}

\providecommand{\mathscr}{\mathcal}

\def\scr#1{\mathscr{#1}}

\def\cal#1{\mathcal{#1}}

\newcommand{\tensorlike}[2]  
{
  \ifthenelse   
  {
    \equal{#1}{}
  }
  {#2}          
  {		
    \underset{#1}{#2}
  }
}

\providecommand{\dblar}[1]{\ar@<2pt>[#1] \ar@<-2pt>[#1]}




\newcommand{\Ext}{\operatorname{Ext}}
\newcommand{\cExt}{{\mathcal{E}xt}}

\newcommand{\Gm}{\mathbb{G}_m} 

\newcommand{\Hom}{\operatorname{Hom}}

\newcommand{\Isom}{\operatorname{Isom}}
\newcommand{\uIsom}{\underline{\Isom}}
\newcommand{\Li}{{\operatorname{Li}}}
\newcommand{\Morph}[1][{}]{\ifthenelse{\equal{{#1}}{{}}}{\operatorname{Morph}}{\operatorname{{#1}-Morph}}}

\newcommand{\Spec}{\operatorname{Spec}}

\renewcommand{\AA}{{\mathbb A}}
\newcommand{\CC}{{\mathbb C}}
\newcommand{\LL}{{\mathbb L}}

\newcommand{\PP}{{\mathbb P}}

\newcommand{\RR}{{\mathbb R}}
\newcommand{\ZZ}{{\mathbb Z}}
\newcommand{\GG}{{\mathbb G}}

\newcommand{\id}{\operatorname{id}}

\newcommand{\naive}{\text{na\"\i ve}}

\renewcommand{\bar}[1]{\overline{#1}}
\newcommand{\colim}{\varinjlim}
\newcommand{\rest}[1]{\big|_{#1}}

\newcommand{\tensor}{\mathop{\otimes}}
\newcommand{\fp}{\mathop{\times}}
\newcommand{\cp}{\mathop{\amalg}}
\newcommand{\fprod}[1][]{\fp_{#1}}

\newcommand{\cat}[1]{\mathsf{#1}}

\newcommand{\Arel}{\cA^{\mathrm{rel}}}
\newcommand{\Zrel}{\mathfrak{Z}^{\mathrm{rel}}}

\newcommand{\twst}{{\rm tw}}
\newcommand{\tw}{{\rm tw}}
\newcommand{\sst}{{\rm ss}}
\newcommand{\spl}{{\rm spl}}
\newcommand{\twss}{{\rm twss}}
\newcommand{\maps}{{\rm maps}}

\newcommand{\cTtw}{\cT^\tw}
\newcommand{\fTtw}{\fT^\tw}
\newcommand{\cTLi}{\cT_{\rm Li}}
\newcommand{\fTLi}{\fT_{\rm Li}}

\newcommand{\cTnaive}{\cT_\naive}
\newcommand{\fTnaive}{\fT_\naive}
\newcommand{\cTnaivetw}{\cT^\tw_\naive}
\newcommand{\fTnaivetw}{\fT^\tw_\naive}

\newcommand{\cTmaps}{\cT_\maps}

\newcommand{\cTmapstw}{\cT^\tw_\maps}

\newcommand{\Tpairs}{\cT}
\newcommand{\Tmaps}{\cT_{\rm maps}}
\newcommand{\Tdeg}{\fT}
\newcommand{\TGV}{\cT_{\rm GV}}
\newcommand{\Tsc}{\cT_{\rm sc}}
\newcommand{\Ttoset}{\cT_{\toset}}
\newcommand{\fTtoset}{\fT_{\toset}}
\newcommand{\TLi}{\cT_{\rm Li}}
\newcommand{\Tlog}{\cT_{\log}}

\newcommand{\Tmid}{\widetilde{\cT}}
\newcommand{\Tmapsdeg}{\cT_{\rm maps,deg}}

\newcommand{\Tspl}{\cT_{\rm spl}}
\newcommand{\Ttwspl}{\cT_{\spl}^{\twst}}
\newcommand{\Ttwpairs}{\Tpairs^{\twst}}
\newcommand{\Ttwdeg}{\Tdeg^{\twst}}
\newcommand{\TtwGV}{\TGV^{\twst}}

\newcommand{\Xlog}{\sX_{\log}}

\providecommand{\fMtw}{\ensuremath{\fM^{\tw}}}

\title{Expanded degenerations and pairs}
\author[D. Abramovich]{Dan Abramovich}
\thanks{Research of D.A. partially supported by NSF grants DMS-0603284 and DMS-0901278}
\address[Abramovich]{Department of Mathematics, Box 1917, Brown University,
Providence, RI, 02912, U.S.A}
\email{abrmovic@math.brown.edu}
\author[C. Cadman]{Charles Cadman}
\address[Cadman]{Atlanta, GA}
\email{math@charlescadman.com}
\author[B. Fantechi]{Barbara Fantechi}
\thanks{Research of B.F. partially supported by: GNSAGA, prin, Dubrovin}
\address[Fantechi]{SISSA,
Via Beirut 4,
34014 Trieste, Italy}
\email{fantechi@sissa.it}
\author[J. Wise]{Jonathan Wise}
\address[Wise]{Stanford University, Department of Mathematics, building 380, Stanford, California 94305}
\email{jonathan@math.stanford.edu}
\thanks{Research of J.W. partially supported by NSF-MSPRF 0802951}

\date{\today}

\begin{document}

\maketitle

\begin{abstract}
Since Jun Li's original definition, several other definitions of expanded pairs and expanded degenerations have appeared in the literature.  We explain how these definitions are related and introduce several new variants and perspectives.  Among these are the twisted expansions used by Abramovich and Fantechi as a basis for orbifold techniques in degeneation formulas.
\end{abstract}

\setcounter{tocdepth}{1}
\tableofcontents

\section{Introduction}

\subsection{Expansions in Gromov--Witten theory}
Let  $X \rightarrow B$ be a flat morphism from a smooth variety to a
smooth curve, with a unique critical value $b_0\in B$.  Suppose that the critical fiber $X_0$ is the  union of smooth varieties $Y_1$ 
and $Y_2$ along a closed subscheme $D$ that is a smooth divisor in each $Y_i$.  Working over $\CC$ and assuming that $X$ is projective over $B$, Jun Li associated a family of \emph{expanded degenerations} $X_{\Li}^{\exp} \to \fTLi(X/B,b_0)$ to $X/B$ (the symbol $\fT$ is the Gothic letter {\it T}).  The fibers of $X_{\Li}^{\exp}$ over $\fTLi(X/B,b_0)$ are different semistable models of $X_0$.  
A typical degenerate fiber of $X_{\Li}^{\exp}$ over $\fTLi(X/B,b_0)$ consists of a chain
\begin{equation} \label{Eq:deg}
 Y_1 \mathop{\sqcup}\limits_D P \mathop{\sqcup}\limits_D \cdots   \mathop{\sqcup}\limits_D  P\mathop{\sqcup}\limits_D
  Y_2
\end{equation}
where $P:= \PP_D(\cO\oplus N_{D/ Y_1})$ is a $\PP^1$ bundle over $D$ (see Definition~\ref{Def:standard-deg}). The copies of $D$ used in this gluing are called {\em splitting divisors}. 
 
Li also defined related families of \emph{expanded pairs} (see Definition~\ref{Def:standard-pair}), denoted here by $(Y^{\exp}_i,D^{\exp})_{\Li} \to \cTLi(Y_i,D)$.  These parameterize different models for the pair
$(Y_i, D)$, obtained by splitting the chain above along a splitting divisor:
\begin{equation} \label{Eq:pair}
 Y_1 \mathop{\sqcup} P \mathop{\sqcup} \cdots   \mathop{\sqcup}  P.
\end{equation}

We record Li's definitions of $\fTLi(X/B, b_0)$ and $\cTLi(Y_i,D)$  in Definition \ref{def:Li-deg} and Section \ref{sec:exp-pairs-li}. It has been observed that the base stacks $\fTLi(X/B, b_0)$ and $\cTLi(Y_i,D)$ should be independent of the varieties $X$ and $Y_i$ involved, but to our knowledge a proof has not yet appeared. This will follow from Theorem \ref{Th:main-T} below.\Dan{Maybe insert the statement of theorems here?}

These stacks form the foundation for Li's approach to the {\em
  degeneration formula}.   This formula originated in symplectic
geometry in the work of A.-M.\ Li and Y.\ Ruan \cite{Li-Ruan} (see also
\cite{Ionel-Parker}) relating Gromov--Witten invariants of the generic
fiber of  $X \rightarrow V$ to appropriately defined  {\em relative} Gromov--Witten invariants of the pairs $(Y_i, D)$.

The stacks $\fTLi(X/B, b_0)$ and $\cTLi(Y,D)$ continue to be important in
Gromov--Witten theory, with new applications and generalizations
emerging in the orbifold theory \cite{AF,ACW} and in related theories, e.g.
 \cite[Sec 3.3]{Tzeng}.

 Li's definitions are geometrically appealing, but are necessarily subtle for reasons explained below. We are not aware of an earlier treatment showing that  $\fTLi(X/B, b_0)$ and $\cTLi(Y,D)$ are algebraic stacks, or detailing the relationships between them necessary for the degeneration formula. Such statements are proven in this paper. 
 
\subsection{The definition of Graber and Vakil}
Anticipating that the stacks of expansions are independent of the target, Graber and Vakil constructed them as the moduli stacks of expansions of curves.  We record their definition in Definition~\ref{Def:TGV} and call their moduli space $\TGV$. One appealing feature of Graber's and Vakil's definition is its close relationship to the already well-understood moduli space of Deligne--Mumford pre-stable curves:  the algebraicity of $\TGV$ is immediate, for example.

Graber and Vakil's define $\TGV$ so that it clearly  parametrizes expansions of the pair $(\PP^1,0)$; at least intuitively, one can construct an expansion of any pair $(Y,D)$ from an expansion of $(\PP^1,0)$ by ``doing the same thing'' to $(Y,D)$ as was done to $(\PP^1,0)$.  However, we do not know of anywhere that this procedure is spelled out, nor of a demonstration of its equivalence to Li's definition.  This equivalence follows from our Theorem \ref{Th:main-T}.

A central  impetus for this paper is the need to generalize expansions to {\em twisted expansions}, as applied in \cite{AF,ACW}.  Taking the point of view of Graber and Vakil, twisted expansions replace the semistable curves appearing in $\TGV$ with \emph{twisted semistable curves} \cite{AV, AOV}.  Our approach to the stacks of expansions $\cT$ and $\fT$ affords an immediate generalization to stacks $\cT^\tw$ and $\fT^\tw$ of twisted expansions (see Definition \ref{Def:twisted-exp}).

\subsection{Results: algebraicity and comparison}

In Definitions  \ref{Def:pair-exp} and \ref{Def:deg-exp} we introduce stacks $\fT$ and $\cT$ of expanded degenerations and expanded pairs, along with their universal families, with no restrictions on $X$ or $Y$.  For pairs, our definition of $\cT$ is similar to Graber's and Vakil's, but the pair $(\PP^1, 0)$ is replaced by the \emph{universal pair} $(\sA, \sD)$, where $\sA = [\AA^1/\GG_m]$ and $\sD = [0\,/\,\GG_m]$.  The universal virtues of $\sA$ are explained in \cite[Lemma 2.1.1]{cadman}: any pair $(Y,D)$ admits a canonical morphism to $(\sA,\sD)$ such that $D = Y \fp_{\sA} \sD$.  An expansion $(\sA',\sD')$ of $(\sA,\sD)$ induces an expansion of $(Y,D)$ by base change:  $(Y',D')= (Y\times_\sA \sA', D\times_\sD \sD')$. 

For degenerations, we also use base change to reduce to the universal example.  The universal degeneration is the multiplication map $\sA^2 \rightarrow \sA$, in the sense that every degeneration $(X/B, b_0)$ admits a canonical map to $(\sA^2 / \sA, \sD)$ (see Section~\ref{Sec:deg-deg}).  Expansions of $(X/B,b_0)$ are defined by pulling back expansions of $(\sA^2 / \sA, \sD)$.

To prove that $\fT$ and $\cT$ are algebraic we provide an isomorphism with stacks of curves similar to $\TGV$.

We prove the following:

\begin{theorem}[Algebraicity theorem]\label{Th:twisted-algebraic}
The stacks $\fTtw$ and $\cTtw$ are algebraic stacks, locally of finite type over $\ZZ$, containing the stacks $\fT$ and $\cT$ as open and dense substacks.
\end{theorem}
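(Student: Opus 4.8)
The plan is to transport algebraicity from the moduli of twisted prestable curves, by realizing $\fTtw$ and $\cTtw$ as stacks of twisted expansions of curves in the spirit of the Graber--Vakil stack $\TGV$, and then to settle the open--dense claim by an inertia-and-smoothing argument. Concretely, I would first introduce a twisted analogue $\TtwGV$ of $\TGV$ whose objects are twisted semistable chains: genus-zero twisted curves in the sense of \cite{AV, AOV} whose coarse spaces are the semistable chains parametrized by $\TGV$, carrying orbifold structure concentrated at the splitting nodes (and, for pairs, at the distinguished marking). As being semistable of this combinatorial type is an open condition, $\TtwGV$ is an open substack of the moduli stack of marked twisted prestable curves; since the latter is algebraic and locally of finite type over $\ZZ$ by \cite{AV, AOV}, so is $\TtwGV$.

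The central step is to produce isomorphisms $\cTtw \cong \TtwGV$ and an analogous one for $\fTtw$, compatibly with universal families. Here I would exploit the universal pair $(\sA, \sD)$ with $\sA = [\AA^1/\Gm]$ and the universal degeneration $\sA^2 \to \sA$: by definition an object of $\cTtw$ over a base $S$ is a twisted expansion $(\sA', \sD')$ of $(\sA, \sD)$, and base change along the canonical map $(Y,D) \to (\sA, \sD)$ converts it into a twisted expansion of pairs whose chain of $\PP^1$-bundles, together with the orbifold orders along its splitting divisors, determines a twisted semistable chain; conversely such a chain reconstructs the twisted expansion of $(\sA, \sD)$. Checking that this assignment is functorial, fully faithful, and essentially surjective---in particular that the twisting prescribed on splitting divisors matches the orbifold structure at the nodes, naturally in $S$---is the heart of the matter. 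Granting the isomorphisms, algebraicity and local finiteness of type over $\ZZ$ descend to $\cTtw$ and $\fTtw$.

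It remains to show that $\fT \hookrightarrow \fTtw$ and $\cT \hookrightarrow \cTtw$ are open and dense. Openness is immediate once the identification is in place, since $\fT$ and $\cT$ are precisely the loci of untwisted expansions, that is, where the associated twisted curve has trivial relative inertia, and triviality of inertia in the fibers is an open condition. For density I would smooth the twisting: a twisted splitting divisor of order $r$ has local model $[\{xy = t\}/\mu_r]$ with $\mu_r$ acting by $(x,y) \mapsto (\zeta x, \zeta^{-1} y)$, and its fiber over $t \neq 0$ is representable. Smoothing the splitting divisors of an arbitrary twisted expansion thus yields a one-parameter family inside $\fTtw$ (resp.\ $\cTtw$) whose general fiber lies in $\fT$ (resp.\ $\cT$) and whose special fiber is the given object, so every point lies in the closure of the untwisted locus.

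The step I expect to be the main obstacle is the construction and verification of the isomorphism with the twisted curve stack: matching, naturally in families, the orbifold structure carried by twisted nodes against the twisting data prescribed on the splitting divisors, and establishing essential surjectivity after the reduction to the universal objects $(\sA, \sD)$ and $\sA^2 \to \sA$.
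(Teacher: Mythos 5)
Your overall strategy is the same as the paper's---transport algebraicity from moduli of twisted genus-zero curves to the stacks of expansions of the universal targets---but there is a genuine gap, and it sits exactly at the step you flag as ``the heart of the matter.'' Producing the inverse functor is not a matter of matching orbifold orders at nodes with twisting data; the missing idea is a \emph{group action}. Since $\sA = [\AA^1/\Gm]$, the only way to pass from a family of twisted semistable chains (equivalently, twisted expansions of $(\PP^1,0)$) over $S$ to a family of expansions of $(\sA,\sD)$ is to take a stack quotient by $\Gm$: the paper's quasi-inverse sends $f:(Y',D')\to(\PP^1,0)$ to $\bigl(\bigl[(Y'\smallsetminus f^{-1}(\infty))/\Gm\bigr],\,[D'/\Gm]\bigr)$, using $[(\PP^1\smallsetminus\{\infty\})/\Gm]=\sA$. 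For this to be well defined and functorial on families one needs two nontrivial facts: the canonical balanced $\Gm$-action on each geometric fiber extends \emph{uniquely} to every family of $2$-marked twisted semistable curves, and every morphism of such families is automatically $\Gm$-equivariant. These are Propositions~\ref{prop:Gm-action} and~\ref{prop:equivariant}, whose proofs occupy Appendix~\ref{sec:Gm-action} and require an equivariant deformation-theory computation, Grothendieck algebraization, and Artin approximation. Your proposal supplies no mechanism in place of this, so the isomorphism $\cTtw\simeq\TGV^{\tw}$ (and hence algebraicity) is not established; this is a missing idea, not a routine verification.

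Two secondary points. First, the degeneration case is not ``analogous'' in the naive sense: the stack of $2$-pointed twisted semistable curves is not isomorphic to $\fTtw$ but to $\fTtw\times\BGm$ (cf.\ Proposition~\ref{prop:pairsiso} and its corollaries), so an extra rigidification or torsor must be handled. The paper avoids this by working with semistable \emph{maps} to the basic degeneration $X\to\AA^1$ of Definition~\ref{Def:basicdeg}, obtaining $\fTtw\times_\sA\AA^1\simeq\fTnaivetw(X/\AA^1,0)$ (Proposition~\ref{Prop:basic-deg-isom}), and then descending algebraicity along the smooth surjection $\AA^1\to\sA$ using \cite[Lemma C.5]{AOV}; some such step is unavoidable in your setup too. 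Second, your density argument (smoothing twisted nodes) is correct for $\fTtw$, but for $\cTtw$ the twisting order at the distinguished marking is locally constant---markings are never smoothed---so $\cT$ can only be dense in the open-and-closed locus where the marking is untwisted; compare $\cTtw\simeq\fTtw\times\ZZ_{>0}$ (Proposition~\ref{prop:TisT}). Your argument as written does not produce untwisted fibers from an object whose marking is twisted.
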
 

\begin{theorem}[Comparison theorem]\label{Th:main-T}
\begin{enumerate} 
\item We have an isomorphism $\cT \simeq \TGV$.
\item Assume $Y$  is smooth and projective over a field, and $D$ is irreducible. Then we have a canonical  isomorphism $\cTLi(Y,D) \simeq \cT$. 
\item Assume $X$ above  is projective over $B$ and $D$ is connected. Then we have a canonical isomorphism $\fTLi(X/B, b_0)\simeq \fT\times_\sA B$.
\end{enumerate}
\end{theorem}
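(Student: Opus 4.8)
The plan is to treat all three comparisons uniformly. By the definitions of $\cT$ and $\fT$ (Definitions~\ref{Def:pair-exp} and~\ref{Def:deg-exp}), every pair and every degeneration acquires an expansion by base change from the universal pair $(\sA,\sD)$, respectively the universal degeneration $\sA^2\to\sA$. This base change produces, for each chosen target, a canonical morphism out of $\cT$ (for pairs) or out of $\fT\fp_\sA B$ (for degenerations) to the corresponding Li or Graber--Vakil stack, and the content of the theorem is that these morphisms are isomorphisms. Since Theorem~\ref{Th:twisted-algebraic} supplies algebraicity on both sides, I would reduce the claim to checking that each comparison morphism is an isomorphism on $S$-points for arbitrary $S$, i.e.\ to producing a two-sided inverse functorially in families; all three parts then come down to understanding expansions of $(\sA,\sD)$ and of $\sA^2/\sA$ explicitly.

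For part (1), I would define $\cT\to\TGV$ by pulling the universal expansion of $(\sA,\sD)$ back along the canonical map $(\PP^1,0)\to(\sA,\sD)$ of \cite[Lemma 2.1.1]{cadman}; the resulting expanded $\PP^1$ is an accordion, hence an object of $\TGV$ (Definition~\ref{Def:TGV}). For the inverse I would show that a Graber--Vakil accordion descends to an expansion of $(\sA,\sD)$: each of its $\PP^1$-components carries the fiberwise scaling $\GG_m$-action fixing its two special points, these actions are compatible across the nodes, and the stack quotient of the whole chain by $\GG_m$ — together with the image of the marked divisor — is an expansion of $\sA=[\AA^1/\GG_m]$ with divisor $\sD=[0/\GG_m]$. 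A dimension count ($\dim\PP^1-\dim\GG_m=0=\dim\sA$) confirms the quotient is of the right type, and mutual inverseness is then the statement that quotienting by $\GG_m$ and base-changing back along $(\PP^1,0)\to(\sA,\sD)$ undo one another, which can be verified component by component along the chain.

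For parts (2) and (3) I would run the same argument relative to a fixed target. In part (2) the morphism $\cT\to\cTLi(Y,D)$ sends an expansion $(\sA',\sD')$ to the base change $(Y\fp_\sA\sA',\,D\fp_\sD\sD')$, which is a Li expansion (Definition~\ref{Def:standard-pair}) because near $D$ the inserted components are exactly the projective bundles $\PP_D(\cO\oplus N_{D/Y})$ produced from the universal chain. The inverse requires showing that \emph{every} Li expansion of $(Y,D)$ arises this way: such an expansion is supported in a neighborhood of $D$, where it is controlled entirely by $N_{D/Y}$, equivalently by the classifying map $Y\to\sA$, and irreducibility of $D$ guarantees this is governed by a single line bundle, so the local chain is pulled back from a unique expansion of $(\sA,\sD)$. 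Part (3) is structurally identical, using the canonical map of a degeneration to $(\sA^2/\sA,\sD)$ from Section~\ref{Sec:deg-deg}; the extra factor $B$ and the fiber product $\fT\fp_\sA B$ appear because the smoothing parameter $b_0$ is recorded by a map $B\to\sA$, along which the universal expansion stack $\fT$ over $\sA$ is pulled back, and connectedness of $D$ plays the role that irreducibility did for pairs, ensuring a single node/smoothing parameter.

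I expect the main obstacle to be the inverse directions in parts (2) and (3): proving that Li's concrete expansions are \emph{all} base changes of the universal expansion, with no exotic families left over. This is a local analysis near $D$ matching Li's $\PP^1$-bundle insertions from \eqref{Eq:deg}--\eqref{Eq:pair} with the toric geometry of iterated expansions of $[\AA^1/\GG_m]$, together with a check that the identification respects the splitting-divisor structure and varies correctly in families. The hypotheses that $D$ be irreducible (resp.\ connected) and that $Y$ (resp.\ $X$) be smooth and projective are precisely what reduce the Picard-theoretic bookkeeping of the normal, resp.\ smoothing, data to the single $\GG_m$ present in $\sA$, so that the classifying map to the universal example is defined canonically and the comparison morphisms are genuine inverses.
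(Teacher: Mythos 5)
For part (1) your route is essentially the paper's: base change along $(\PP^1,0)\to(\sA,\sD)$ one way, and a $\GG_m$-quotient the other way (the paper factors this through semistable maps, $\cT\simeq\cTnaive(\PP^1,0)\simeq\cTmaps\simeq\TGV$, via Lemmas~\ref{Lem:TP1}, \ref{Lem:GV-maps} and Proposition~\ref{Prop:basic-pair-isom}). But you underestimate the one step that carries real weight there: the quotient must be taken on a \emph{family} over an arbitrary base $S$, so you need the balanced $\GG_m$-action to exist on the total space of any family of (twisted) semistable curves and you need the structure map to $\PP^1$ to be equivariant. Checking this ``component by component along the chain'' only treats geometric fibers; the paper spends its entire appendix (Propositions~\ref{prop:Gm-action} and~\ref{prop:equivariant}: infinitesimal extension, algebraization, Artin approximation) establishing the family statement. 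Without that input your inverse functor is not defined on objects over a general $S$.

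For parts (2) and (3) there is a genuine gap, and it sits exactly where you predicted the ``main obstacle'' would be. Your plan is to write down a two-sided inverse on $S$-points, but no such direct construction is available: for a general pair $(Y,D)$ there is no group action to quotient by, so a family of Li expansions over $S$ does not visibly descend to a family of expansions of $(\sA,\sD)$; ``it is controlled by $N_{D/Y}$ near $D$'' is a fiberwise remark, not a construction in families. Note also that even the forward factorization $\cT\to\cTLi(Y,D)\subset\cTnaive(Y,D)$ is a family-level claim: Li's Definition~\ref{def:Li-deg} demands that the family be \'etale-locally pulled back from a standard model $X[\ell]\to\AA^\ell$, which is strictly stronger than having standard fibers (this distinction is the whole point of Section~\ref{Sec:trouble}), and the paper obtains it only after showing the charts $\AA^n\to\cT$ form a smooth cover.

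The paper's actual argument avoids constructing an inverse altogether: it first proves $\cTLi$ and $\fTLi$ are algebraic (Proposition~\ref{prop:TLi-algebraic}, using Li's built-in formal smoothness of $\AA^n\to T_{\Li}$ and representability of the diagonal of $T_\naive$); it then shows $\sA^n=[\AA^n/\Gm^n]$ is \'etale over \emph{both} $\cT$ (versality of node-smoothing in $\TGV$) and $\cTLi(Y,D)$ (Li's smoothness plus the automorphism computation of Lemma~\ref{lem:accordion-auts}), so the comparison map is \'etale; finally it invokes the equivalence on geometric points (Proposition~\ref{Prop:geometric-points}), and \'etale plus an equivalence on geometric points gives an isomorphism. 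This is also where the hypotheses really enter: projectivity and connectedness/irreducibility of $D$ force $\Gm(D)=\Gm(k)$, so the automorphism group of a length-$\ell$ expansion is $\Gm(k)^\ell$ on both sides and the geometric-point comparison goes through. Their role is to match automorphism groups, not, as you suggest, to make the classifying map to $\sA$ canonical --- that map exists for any pair.
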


\subsection{Split expansions and gluing maps}\Dan{to be rewritten}
A key element of the degeneration formula, which too easily might slip from one's attention, is a {\em gluing map} relating the stacks $\fT$ and   $\cT$ as well as their twisted versions. Let us first describe the untwisted situation.

Define $\fT_\spl= \cT\times \cT$. We can view it as parametrizing pairs consisting of an expansion of $(Y_1,D)$ together with an expansion of $(Y_2,D)$. Gluing these along the isomorphism 
\begin{equation} \label{Eq:normal-isom}
N_{D/ Y_1} \otimes N_{D/ Y_2} \simeq O_D
\end{equation}
we get an expansion of $X_{0}$ together with a choice of a splitting divisor. In fact $\fT_\spl$ is canonically the stack parametrizing expansions of $X_0$ together with  a choice of a splitting divisor. Forgetting this choice gives an object of  $\fT_0 = 0\times_\sA\fT$. We obtain the following:

\begin{proposition}\label{Prop:gluing}
There is a canonical morphism $\fT_\spl \to \fT_0$ of pure degree 1.
\end{proposition}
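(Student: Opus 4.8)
The plan is to construct the morphism as a forgetful map, and then to show it is proper, representable, and an isomorphism over a dense open substack, so that its pure degree is $1$. First I would make the gluing construction of the preceding paragraph functorial. Over the universal base $\cT\times\cT$ we have the two universal expanded pairs. Using the presentation of the universal degeneration as the multiplication $\sA^2\to\sA$ together with the canonical maps $Y_i\to\sA$ attached to $D\subset Y_i$, I would glue the two universal pair-families along their boundary divisors by means of the isomorphism \eqref{Eq:normal-isom}. The result is a family of expanded degenerations of $X_0$; since $X_0$ is the fiber over $0\in\sA$, the family factors through $\fT_0=0\times_\sA\fT$, and the locus along which the gluing was performed is a distinguished node. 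Forgetting that node is the asserted morphism $\fT_\spl\to\fT_0$. The bulk of the work here is checking that the glued total space is flat and is again an expanded degeneration in the sense of the definition of $\fT$, and that the construction is independent of the auxiliary choices so that it descends to a morphism of stacks; algebraicity of the target is guaranteed by Theorem \ref{Th:twisted-algebraic}.

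Next I would identify the fibers. Cutting an expanded degeneration of $X_0$ along any chosen node produces an expansion of $(Y_1,D)$ on one side and an expansion of $(Y_2,D)$ on the other, inverting the gluing; this both realizes $\fT_\spl$ as the stack of expansions of $X_0$ equipped with a choice of splitting divisor, and shows that the fiber of $\fT_\spl\to\fT_0$ over an expansion with $n$ inserted components is the finite set of its $n+1$ nodes. An automorphism of such an expansion preserving the chosen node corresponds precisely to a compatible pair of automorphisms of the two sides, so the morphism is injective on automorphism groups and hence representable. It is moreover quasi-finite, and I would upgrade this to properness by the valuative criterion: a one-parameter family of expansions together with a section of its relative nodes has a unique limiting node, and the gluing and cutting operations are mutually inverse closed constructions. (Alternatively one reads all of this off the combinatorial, Artin-fan models furnished by Theorems \ref{Th:twisted-algebraic} and \ref{Th:main-T}.)

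Finally I would compute the degree. Both $\fT_\spl=\cT\times\cT$ and $\fT_0$ are irreducible, and their dense open substacks parametrize the trivial expansions: on the source, the pair $(Y_1,D)$ together with $(Y_2,D)$ with no inserted components, and on the target the unexpanded central fiber $X_0=Y_1\cup_D Y_2$. Over this locus there is a single node, namely $D$, so the forgetful morphism restricts to an isomorphism of dense open substacks. A proper, representable morphism of irreducible stacks of the same dimension that is generically an isomorphism is of pure degree $1$, which is the assertion. The main obstacle is the first step: making the gluing of the two universal families algebraic and fiberwise correct over an arbitrary base, and verifying that the identification \eqref{Eq:normal-isom} is exactly what is needed for the glued total space to be a flat expanded degeneration lying over $0\in\sA$. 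Once the morphism is in hand, representability and the generic isomorphism make the pure-degree statement essentially formal.
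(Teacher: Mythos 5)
Your proof is correct and takes essentially the same approach as the paper: the morphism is the gluing map $\fT_\spl = \cT\times\cT \to \fT_0$, and pure degree $1$ follows from the observation that it restricts to an isomorphism over the dense open substack of $\fT_0$ where the fiber of the singular locus of the universal expansion has a single irreducible component (so there is a unique choice of splitting divisor). The extra verifications you include --- the fiber description in terms of nodes, representability, and properness --- go beyond the paper's one-line justification but are consistent with it, and your emphasis that the isomorphism \eqref{Eq:normal-isom} is what places the glued family over the point $0\in\AA^1\to\sA$ (rather than merely over the divisor $\sD$) is precisely the point that makes the generic automorphism groups, and hence the dense opens, match.
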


This morphism is implicit in Li's treatment of the degeneration formula; see \cite[Propositions~4.12 and~4.13]{Li1}.

The situation is slightly more subtle in the twisted case. First, in order to glue twisted expanded pairs the twisting along the marked divisor must coincide. Denoting by $\fr:\cTtw \to \ZZ_{>0}$ the locally constant twisting function, we consider the stack  $\cTtw\times_{\ZZ_{>0}} \cTtw$ of pairs of twisted expansions with the same twisting at the marking. Second, we do not have a morphism from this stack to $\fT$: the isomorphism (\ref{Eq:normal-isom}) must be lifted to the $\fr$-th roots. Denoting by $\fTtw_\spl$ the stack parametrizing such liftings, we obtain a pair of morphisms $\fTtw_\spl \to \cTtw\times_{\ZZ_{>0}} \cTtw$ and $\fTtw_\spl \to \fTtw_0 := 0\times_\sA \fTtw$. The analogous gluing result is the following proposition.  
 \begin{proposition}\label{Prop:gluing-tw}
 \begin{enumerate}
 \item The morphism  $\fTtw_\spl \to \cTtw\times_{\ZZ_{>0}} \cTtw$ is a gerbe banded by $\bmu_\fr$, and thus has degree $1/\fr$ 
 \item The morphism $\fTtw_\spl \to \fTtw_0$ has degree $1/\fr$. 
 \end{enumerate}
 \end{proposition}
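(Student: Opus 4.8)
The plan is to treat both statements as the twisted refinement of Proposition~\ref{Prop:gluing}, in which the single factor $1/\fr$ is produced by the band $\bmu_\fr$ of the twisted marking. Throughout I would work over a fixed value of the locally constant function $\fr$.

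For part~(1), I would first unwind the definition: a point of $\fTtw_\spl$ is a pair of twisted expansions with common twisting $\fr$ together with a lift of~(\ref{Eq:normal-isom}) to $\fr$-th roots, and the morphism in question forgets this lift. To identify the fibre I would argue that the stack of such lifts is the classifying stack $\cB\bmu_\fr$. Étale-locally on the base an $\fr$-th root of~(\ref{Eq:normal-isom}) exists, so lifts exist; any two differ by multiplication by an element of $\bmu_\fr$; and that element is exactly the band of the twisted marking, so it acts as a $2$-automorphism rather than producing a genuinely new lift. Hence the stack of lifts is connected with automorphism group $\bmu_\fr$, the morphism is a gerbe banded by $\bmu_\fr$, and its degree is $1/\fr$.

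For part~(2) I would compare automorphisms directly over a point of $\fTtw_0$, using Proposition~\ref{Prop:gluing} for the underlying coarse gluing. The coarse datum underlying the morphism $\fTtw_\spl\to\fTtw_0$ is precisely the untwisted gluing $\cT\times\cT\to\fT_0$, which has degree $1$, so it suffices to see that the twisting contributes exactly the factor $1/\fr$. The key point is an asymmetry of bands: in $\fTtw_\spl$ the lift exhibits the band $\bmu_\fr$ shared by the two twisted markings as a genuine $2$-automorphism, exactly as in part~(1); but in $\fTtw_0$, where the glued expansion is parametrized relative to the rigid $X_0$, the band of the resulting twisted node does not extend to an automorphism of the object, since it does not extend over the rigid components $Y_i$. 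Thus the morphism kills a copy of $\bmu_\fr$ and, over the degree-$1$ coarse gluing, is itself a gerbe banded by $\bmu_\fr$; its degree is therefore $(1/\fr)\cdot 1=1/\fr$.

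The main obstacle will be the inertia bookkeeping at the balanced twisted node. One must verify that gluing the two $\fr$-th-root markings yields a single band $\bmu_\fr$—not $\bmu_\fr\times\bmu_\fr$—so that exactly one factor of $\fr$ appears, and that this single band is the $2$-automorphism already accounted for in part~(1). Concretely the delicate step is to check, in the balanced-node convention of \cite{AV,AOV}, that the two marking-bands are identified (one as the inverse of the other) and that the resulting node-band is visible in $\fTtw_\spl$ through the lift but invisible in $\fTtw_0$ by rigidity of the $Y_i$. Once this matching of bands is pinned down, both degree computations follow formally from Proposition~\ref{Prop:gluing}.
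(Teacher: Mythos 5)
Your part~(1) is correct and is essentially the paper's own argument: \'etale-local existence of the band-inverting gluing isomorphism together with the identification of its $2$-automorphism group with $\bmu_\fr$ is exactly how the paper exhibits $\fTtw_\spl \to \cTtw\times_{\ZZ_{>0}}\cTtw$ as a $\bmu_\fr$-gerbe.

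Part~(2), however, rests on a false mechanism. You claim that in $\fTtw_0$ ``the band of the resulting twisted node does not extend to an automorphism of the object,'' so that the morphism $\fTtw_\spl\to\fTtw_0$ kills a $\bmu_\fr$ and is itself a gerbe banded by $\bmu_\fr$. This is wrong on both counts, and the paper explicitly warns against it: immediately after the statement it remarks that the second morphism ``is not a gerbe but rather more like the embedding of a scheme in an $(r{-}1)$-st infinitesimal neighborhood.'' Ghost automorphisms of twisted curves and twisted expansions live at the \emph{nodes} (it is the ghosts at twisted \emph{markings} that are $2$-isomorphic to the identity); they act trivially on coarse spaces, so they extend by the identity over the rigid components $Y_i$, and a one-node expansion with twisting $\fr$ has automorphism group $\bmu_\fr$ as a point of $\fTtw_0$. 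Hence $\fTtw_\spl\to\fTtw_0$ induces an \emph{isomorphism} on stabilizers (both are $\bmu_\fr$), its relative inertia is trivial, and it cannot be a gerbe.

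The true source of the factor $1/\fr$ in part~(2) is nilpotent rather than stacky, and this is what the paper computes. Since pure degree in Costello's sense may be checked on a dense open substack, the paper restricts to the locus $\cU$ where the expansion has at most one node. There the untwisting map on the component with twisting $\fr$ is identified with the $\fr$-th power map $\sA\to\sA$, so the fiber over the origin is $\cU^\tw_0 \cong \bigl[\bigl(\Spec \CC[t]/t^\fr\bigr)/\bmu_\fr\bigr]$, a nonreduced stack of length $\fr$, while $\cU^\tw_\spl \cong B\bmu_\fr$ is precisely its reduced substack, cut out by $t=0$. The pushforward of the fundamental class therefore picks up the factor $1/\fr$ from the length-$\fr$ thickening, with no change in stabilizers at all. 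Your argument, made precise, would instead predict a flat map with fibers $B\bmu_\fr$, which is incompatible with this local model; so part~(2) needs an explicit computation of this kind (or an equivalent identification of the nonreduced structure of $\fTtw_0$ along the splitting locus), not a band-counting argument layered over Proposition~\ref{Prop:gluing}.
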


This is used to prove the orbifold degeneration formula of \cite{AF}. The second morphism is not a gerbe but rather more like the embedding of a scheme in an $(r{-}1)$-st infinitesimal neighborhood.

\subsection{Labels and twists}\Dan{to be rewritten}
Another result of \cite{AF} shows that relative orbifold Gromov--Witten invariants are independent of twisting. This relies on Costello's formalism of labeling by a set $\sS$ \cite{Costello} applied to expanded degenerations.  For this purpose we define algebraic stacks $\cT^\sS$ and $\fT^\sS$ of expansions with splitting divisors labeled by $\sS$ (Section \ref{sec:label-by-set}) and show that they are algebraic and representable over the respective $\cT$ and $\fT$ (Proposition \ref{Prop:labels-algebraic}). Furthermore given a function $\fr: \sS \to \ZZ_{>0}$ we define algebraic stacks $\cT^\fr$ and $\fT^\fr$ of $\fr$-twisted $\sS$-labelled expansions, where the twisting of a splitting divisor labelled by $\delta\in \sS$ is   determined as $\fr(\delta)$; accordingly the function $\fr$ is called a {\em twisting choice}. We prove

\begin{proposition}\label{Prop:untwisting}
If $\fr, \fr'$ are twisting choices and $\fr$ divides $\fr'$, then the structure map $\cT^{\fr'} \to \cT^\sS$ factors canonically as $\cT^{\fr'} \to \cT^\fr \to\cT^\sS$. The same holds for $\fT^{\fr'} \to \fT^\fr \to\fT^\sS$.  If further $\fr'$ divides $\fr''$ then the resulting triangle is canonically commutative. \Dan{State the exact comparison needed in \cite{AF}?} \end{proposition}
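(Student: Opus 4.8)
The plan is to realize the twisting along each labelled splitting divisor as a root-stack structure and to build the comparison morphisms from the canonical maps between root stacks of different orders. Recall that on the universal pair $(\sA,\sD)$ the $r$-th twisting is the root stack $\sqrt[r]{(\sA,\sD)}$, which is again of the form $[\AA^1/\GG_m]$ whose structure map to $\sA$ is the $r$-th power map on both $\AA^1$ and $\GG_m$; thus $\cT^\fr$ is obtained from $\cT^\sS$ by inserting, at the divisor labelled $\delta$, the $\fr(\delta)$-th root structure. With this description the stack $\cT^\sS$ is the case $\fr\equiv 1$, and the structure map $\cT^\fr\to\cT^\sS$ is complete untwisting.

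First I would treat a single splitting divisor. Assuming $\fr(\delta)\mid\fr'(\delta)$, write $\fr'(\delta)=\fr(\delta)\,s_\delta$. The factorization $\sqrt[\fr'(\delta)]{(\sA,\sD)}\to\sqrt[\fr(\delta)]{(\sA,\sD)}\to(\sA,\sD)$, in which the first arrow is the $s_\delta$-th power map and the composite is the $\fr'(\delta)$-th power map, is canonical: once $s_\delta$ is fixed there is no further choice. Because root stacks are functorial and commute with the base changes from $(\sA,\sD)$ that define the universal families, these divisor-by-divisor power maps assemble into a canonical morphism $\cT^{\fr'}\to\cT^\fr$ over $\cT^\sS$ realizing partial untwisting. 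Composing with $\cT^\fr\to\cT^\sS$ recovers the structure map $\cT^{\fr'}\to\cT^\sS$, giving the factorization in the pair case.

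For the degeneration stacks the argument is formally identical: since $\fT^\fr$ is produced from the universal degeneration $\sA^2\to\sA$ by the same root-stack twisting along splitting divisors, the same power maps furnish $\fT^{\fr'}\to\fT^\fr$ and the factorization through $\fT^\sS$. For the final compatibility, suppose in addition $\fr'(\delta)\mid\fr''(\delta)$ and write $\fr''(\delta)=\fr'(\delta)\,s'_\delta=\fr(\delta)\,s_\delta s'_\delta$. In the coordinates above the composite of the $s'_\delta$-th and $s_\delta$-th power maps is $u\mapsto u^{s_\delta s'_\delta}$, which is exactly the $(\fr''(\delta)/\fr(\delta))$-th power map; the two sides agree strictly, so the triangle commutes on the nose and the canonical witnessing $2$-cell is the identity.

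I expect the main obstacle to be bookkeeping rather than conceptual content: one must verify that forming the $\fr(\delta)$-th root structure out of the $\fr'(\delta)$-th one is well defined in families --- that it glues over an arbitrary base and is compatible with isomorphisms of expansions --- so that it genuinely defines a $1$-morphism of stacks, and that the identifications in the last part are natural in the base rather than merely pointwise. Both reduce to the functoriality of the root-stack construction together with the strict associativity of power maps noted above; the only real care is in making these naturalities precise, so that the displayed triangles commute up to a specified, and not just an abstract, $2$-isomorphism.
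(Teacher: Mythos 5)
Your proposal has a genuine gap, and it sits exactly at the step you dismiss as bookkeeping. The claim that the ``divisor-by-divisor power maps assemble into a canonical morphism $\cT^{\fr'}\to\cT^{\fr}$'' over an arbitrary base is the entire content of the proposition, and your mechanism for it does not work. First, the universal families over $\cT^{\fr}$ are \emph{not} obtained by base change from $(\sA,\sD)$: in this paper only expansions of other pairs are pulled back from expansions of $(\sA,\sD)$; the expansions of $(\sA,\sD)$ themselves are families $(\sA',\sD')\to S$ equipped with a (non-flat, non-cartesian) map to $(\sA,\sD)$. In particular the twisted expansion is not $(\sA',\sD')\fp_{(\sA,\sD)}\sqrt[r]{(\sA,\sD)}$ --- the preimage of $\sD$ in an expansion is a whole chain of components, not a Cartier divisor, so there is no base change along which your power maps $\sqrt[\fr'(\delta)]{(\sA,\sD)}\to\sqrt[\fr(\delta)]{(\sA,\sD)}$ can be transported. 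The root-stack description of twisting (root each component along its two boundary divisors and reglue, Definition~\ref{Def:twisted-standard}) is valid \emph{fiberwise}, but over a base $S$ on which nodes are smoothed, the components and the splitting divisors do not persist as closed substacks of the total space (the non-smooth locus has codimension two and is not Cartier), so there is nothing for the power maps to act on near the smoothing locus. Producing the partial coarsening of a \emph{family} of twisted expansions is a real theorem --- it is what Olsson's equivalence between twisted curves and log-twisted curves provides --- not a consequence of functoriality of root stacks.

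The paper's proof avoids this geometry entirely by using its actual definition of $\cT^{\fr}$ (Definition~\ref{Def:r-twisted-T}): an object is an object of $\cT^{\sS}$ together with an extension of $\oM_{\cT}\to\sA$ to $\oM_{\cT}(\fr)\to\sA$. With that definition, $\fr\prec\fr'$ gives canonical inclusions of sheaves of monoids $\oM_{\cT}(\fr)\subset\oM_{\cT}(\fr')$ and $\oM_{\sX}(\fr)\subset\oM_{\sX}(\fr')$, and \emph{restriction} of extension data along these inclusions defines $\cT^{\fr'}\to\cT^{\fr}$ (and the map on universal families), compatibly with the projections to $\cT^{\sS}$; the triangle for $\fr\prec\fr'\prec\fr''$ commutes strictly because restriction along composed inclusions is associative. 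Your fiberwise power maps are what this restriction looks like on geometric points (via Lemma~\ref{lem:tw-fibers}), but the logarithmic formulation is what makes ``canonically in families'' literally true. To repair your argument you should either adopt the log-theoretic definition from the start, or genuinely prove family-level partial untwisting, e.g.\ by exhibiting the local models $[\{xy=t\}/\mu_{\fr'(\delta)}]\to[\{uv=t^{s_\delta}\}/\mu_{\fr(\delta)}]$, $(x,y)\mapsto(x^{s_\delta},y^{s_\delta})$, and showing these glue canonically and compatibly with base change --- which is precisely the work the paper's definitions are designed to avoid.
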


The map $\cT^{\fr'} \to \cT^\fr$ is named the {\em partial untwisting map}.

\subsection{Na\"{\i}ve expansions} \Dan{to be rewritten}
The definitions employed here may not be the first one would consider.  Why not simply define a family of expansions to be an arbitrary flat family with fibers of the forms~\eqref{Eq:deg} or~\eqref{Eq:pair}?  We refer to these families as \emph{na\"ive expansions} and write $\cTnaive(Y,D)$ for pairs and $\fTnaive(X/B,b_0)$ for degenerations.  The problem with these definitions is that in any reasonable generality the automorphism group and deformation space of a na\"ive expansion are inappropriate; it is false, for example, that na\"ive expansions are independent of the target.  See Section~\ref{Sec:trouble} for more about the trouble with the na\"ive definition.




\subsection{Expansions and combinatorial structures} 
Jun Li's treatment also requires a number of  properness and connectedness assumptions. Under these assumptions one can also use logarithmic structures on expanded degenerations. We do not follow this approach in this paper, as it is studied in some detail elsewhere \cite{Kim, AMW}.\Jonathan{I don't understand this discussion.  How does one use logarithmic structures:  with expansions or without?} However, in Section~\ref{Sec:otherdef} we reinterpret our definitions of $\cT$ and  $\fT$ in more combinatorial terms, using logarithmic structures and configurations of line bundles.

\subsection{Acknowledgement}
We are happy to acknowledge Michael Thaddeus for several useful
conversations about the topics of Section~\ref{sec:TLi}
and Appendix~\ref{sec:Gm-action}.

\section{Definitions of the stacks}
\label{sec:defs}

\subsection{Expanded pairs: na\"{\i}ve and universal approaches}

Let $Y$ be either a  scheme or an Artin stack over a
field $k$, and
$D\subset Y$ a 
Cartier divisor. We call such $(Y,D)$ a {\em pair}.


Fix a pair $(Y,D)$ and write $\cO_Y(D)|_D = \cO_D(D)$. Let $P$ be the projective completion $\PP(\cO_D(D) \oplus \cO)$ of the
normal bundle of $D$.  It is a $\PP^1$-bundle over $D$ with two
sections, denoted $D_+$ and $D_-$ with normal bundles 
$N_{D_-/P} \simeq \cO_D(-D)$ and $N_{D_+/P} \simeq \cO_D(D)$.\Jonathan{notation conflict for normal bundles; resolve}  These isomorphisms  are \emph{canonical}
since the  complement of $D_+$ (resp. $D_-$) in $P$ is canonically
isomorphic to the total space of the line bundle $\cO_D(-D)$ (resp. $\cO_D(D)$) and $D_-$
(resp. $D_+$) is the image of the zero-section. 

The action of $\Gm(D)$ on the line bundle $\cO_D(D)$ induces an action of $\Gm(D)$ on $P$ whose fixed loci are $D_-$ and $D_+$.

\begin{definition}\label{Def:standard-pair}
The \emph{standard expansion of length $\ell$} of $(Y,D)$ is obtained by gluing $\ell$ copies of $P$ to $Y$ along the distinguished divisors:
\begin{equation*}
  Y[\ell]  :=  Y \mathop{\sqcup}\limits_{D=D_-}  P \mathop{\sqcup}\limits_{D_+=D_-}  \cdots   \mathop{\sqcup}\limits_{D_+=D_-}  P.
\end{equation*}

The notation is meant to indicate that the first copy of $P$ is joined
to $Y$ along $D_-$ and to the second copy of $P$ along $D_+$, etc.
Thus the normal bundles of each  copy of $D$ in the two
components containing it are dual to each other.  The divisor $D_+$ of the last copy
of $P$ is contained in the smooth locus of $Y[\ell] $.  Denoting this
divisor $D[\ell] $, there is a morphism of pairs 
\begin{equation*}
  (Y[\ell] , D[\ell] ) \rightarrow (Y, D) 
\end{equation*}
collapsing all of the copies of $P$ onto $D$.  This maps $D[\ell]$
isomorphically onto $D$.

 We
allow $\ell=0$ by setting $(Y[0] , D[0] ) := (Y, D)$. 
\end{definition}

Such expansions have appeared also under the name {\em accordions} \cite{GV}. A closely related notion is  {\em
  Zollst\"ocke} \cite{Kausz}.
We note that even when
$Y$ is a stack, the morphism $Y[\ell] \to Y$ is representable, and in
fact projective.

Here is the na\"\i ve notion of expanded pairs over a base scheme:

\begin{definition}  \label{Def:pair-naive}
\begin{enumerate}[leftmargin=0pt,itemindent=3em]
\item 
  Let $(Y, D)$ be a pair. A
  {\em na\"\i ve expansion of $(Y, D)$ over a scheme $S$} is a flat family of pairs $(Y',D')
  \to S$, locally of finite presentation over $S$,   with a proper map
  $(Y',D') \to (Y,D)$ such that each geometric fiber of $(Y',D')$ over $S$ is
  isomorphic over  $(Y, D)$  to a standard expansion of some length $\ell\geq 0$.

\item
A morphism from a na\"\i ve expansion $(Y',D')/S'$ to $(Y'',D'')/S''$ is a fiber diagram
$$\xymatrix{
Y' \ar[r]\ar[d] & Y''\ar[d] \\ S'\ar[r] & S'' .
}
$$
\end{enumerate}
\end{definition}

We note that  when $Y$ is a stack,  this construction a priori gives a 2-category, whose objects are  na\"\i ve expansions $(Y',D')/S$ 
of $(Y,D)$,  arrows given as above, and 2-arrows are isomorphisms of arrows. But Since the morphism $Y' \to Y$ is representable, this 2-category is  isomorphic to a category,  since 2-isomorphisms are unique when they exist; see \cite[Lemma 3.3.3]{AGV}. In fact it suffices that $Y'\to Y$ restricts to a representable morphism on a schematically dense substack $Y'_0\subset Y'$, see \cite[Lemma 4.2.3]{AV};  this is used when discussing twisted expansions (Definition \ref{Def:twisted-exp}).

 The functor $(Y',D')/S \mapsto S$ makes this a fibered category.

\begin{definition} Denote the resulting category $\cTnaive(Y,D)$.  Let $(Y^{\exp}, D^{\exp})_\naive\to \cTnaive(Y,D)$\Jonathan{I don't like the notation for the universal na\"ive expanded pair.  Do we actually use this object anywhere?  Maybe the notation $(Y^{\exp}_{\naive}, D^{\exp}_{\naive})$ would be better} be the universal na\"\i ve expansion. 
\end{definition}

We note that even when $Y$ is a scheme, an expansion $Y'$ is only guaranteed to be an algebraic space. An example that is not a scheme is given in Remark \ref{Rem:Li-etale}.

 We have the following:

\begin{lemma}\label{Lem:T-stack}
The category  $\cTnaive(Y,D)$ is a stack.
\end{lemma}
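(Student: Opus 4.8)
The plan is to verify the stack axioms for the fibered category $\cTnaive(Y,D)$ directly: descent for objects and descent for morphisms (fppf or étale, as appropriate), building on the fact that each of these amounts to a statement about flat families of schemes or algebraic spaces, which satisfy descent. First I would recall that an object of $\cTnaive(Y,D)$ over a scheme $S$ is a flat, locally finitely presented family $(Y',D')\to S$ equipped with a proper $S$-map to $(Y,D)$ whose geometric fibers are standard expansions, and a morphism is a Cartesian diagram. So the claim reduces to two things: (i) that morphisms in this fibered category form a sheaf in the chosen topology, and (ii) that descent data for objects are effective.

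\medskip
For the first point, I would fix a cover $\{S_i \to S\}$ and two objects $(Y',D')$, $(Y'',D'')$ over $S$; a morphism between them is a map $Y' \to Y''$ over $(Y,D)$ inducing a fiber diagram over $S$. Since $Y' \to S$ and $Y''\to S$ are separated and flat of finite presentation (being proper over $(Y,D)$, hence separated, and fiberwise the standard expansions), the datum of such a Cartesian map is equivalent to a section of an appropriate $\uIsom$/$\Morph$ space, and morphisms of algebraic spaces satisfy fppf descent. Concretely, a family of local morphisms $Y'\times_S S_i \to Y''\times_S S_i$ that agrees on the overlaps $S_i\times_S S_j$ glues to a unique morphism over $S$, because $Y''\to S$ is separated and the gluing is a statement about maps into a separated algebraic space. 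This handles gluing of morphisms and their uniqueness.

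\medskip
For effectivity of descent on objects, I would start with descent data: objects $(Y'_i, D'_i)$ over $S_i$ together with isomorphisms over the overlaps satisfying the cocycle condition. Because each $Y'_i \to S_i$ is proper over $(Y,D)$ and hence (via the projective map $Y[\ell]\to Y$ noted after Definition~\ref{Def:standard-pair}) carries a relatively ample line bundle pulled back from a fixed relatively ample bundle on $Y$, I can invoke fppf descent for such relatively (quasi-)projective morphisms to descend $Y'$ to an algebraic space proper over $(Y,D)$ and flat over $S$. The divisor $D'$ descends likewise. It remains to check that the two defining conditions are preserved under descent: flatness and local finite presentation are fppf-local on the base, and the condition that each geometric fiber be a standard expansion over $(Y,D)$ can be checked after the faithfully flat base change $\coprod S_i \to S$, since geometric fibers of the descended family are identified with geometric fibers of the $Y'_i$.

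\medskip
I expect the main obstacle to be the effectivity of descent for the \emph{objects}, precisely because, as the text emphasizes (see the remark before this lemma and Remark~\ref{Rem:Li-etale}), the total space $Y'$ need only be an algebraic space, not a scheme, even when $Y$ is a scheme. Thus I cannot simply appeal to descent along a quasi-projective morphism in the category of schemes; I must work with algebraic spaces and use the relative ampleness coming from $Y'\to Y$ to guarantee effectivity there. The gluing of morphisms is comparatively routine once separatedness is in hand, so the real care goes into (a) exhibiting the relative polarization uniformly across the cover and (b) confirming that ``being a standard expansion of some length $\ell$'' is an fppf-local condition on $S$, which follows since it is defined on geometric fibers.
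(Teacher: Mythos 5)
Your descent-for-morphisms argument is fine (indeed it is simpler than you make it: morphisms of sheaves, hence of algebraic spaces, satisfy fppf descent with no separatedness hypothesis needed), but your treatment of effectivity of descent for \emph{objects} contains a genuine gap. You propose to descend $Y'$ by exhibiting a relatively ample line bundle ``uniformly across the cover'' and invoking descent for relatively (quasi-)projective morphisms. Such a polarization does not exist in general. The projectivity of each standard model $Y[\ell]\to Y$, and of each geometric fiber of $(Y',D')\to S$ over $(Y,D)$, does not produce a relatively ample bundle on the \emph{family}, and Remark~\ref{Rem:Li-etale} of the paper exhibits exactly this failure: a family of expansions, \'etale-locally pulled back from the standard models (hence \'etale-locally quasi-projective), whose total space is not quasi-projective over any Zariski neighborhood of the node --- the degrees $m_1,m_2,m_3$ of any ample class on the degenerate fiber would have to satisfy $m_1+m_2=m_1$ and $m_2+m_3=m_3$, forcing the middle component to be collapsed. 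In such a Hironaka-type situation the members of your descent datum may individually carry ample bundles, but no choice of them is compatible with the descent isomorphisms, which is precisely why the descended object is only an algebraic space and not a scheme. So step (a) of your plan cannot be carried out, and with it the appeal to projective descent collapses.

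The correct observation --- and it is the entire content of the paper's one-line proof --- is that no polarization is needed at all: since objects of $\cTnaive(Y,D)$ are permitted to be algebraic spaces (or stacks), effectivity of descent is automatic, because algebraic spaces form a stack in the fppf (a fortiori \'etale) topology. A descent datum of algebraic spaces glues as a sheaf, the glued sheaf is again an algebraic space by the bootstrap theorem, it is flat and locally of finite presentation over $S$ and proper over $(Y,D)$ because these properties are fppf-local on the base, and the fiberwise condition of being a standard expansion is checked after the faithfully flat base change exactly as you say. Your instinct that one ``cannot simply appeal to descent in the category of schemes'' was right, but the conclusion to draw is the opposite of the one you drew: allowing algebraic spaces \emph{removes} the need for ampleness rather than relocating it.
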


\begin{proof} Since the pullback of an expansion is an expansion, and since arrows are fiber diagrams, the category $\cTnaive(Y,D)$ is
fibered in groupoids over the category of schemes. The
descent properties are automatic since we allow a na\"\i ve expansion $Y'$ to be an algebraic
space or a stack.
\end{proof}

Definition \ref {Def:pair-naive} above is problematic for a number of reasons we detail in Section \ref{Sec:trouble}. But we will use it in the following particular case:

As discussed in the introduction, there is a remarkable stack $\sA  :=
[\AA^1 / \Gm]$ with a Cartier divisor $
\sD:= [0 / \Gm] \simeq \cB\GG_m$, which together form the {\em universal pair $(\sA,\sD)$}: given a pair $(Y,D)$,  the sheaf homomorphism
$\cO_X(-D) \rightarrow \cO$ induces a  morphism $Y \rightarrow \sA$ and $D$ is the pre-image of $\sD$. The morphism $Y \rightarrow \sA$ is flat since $D$ is a Cartier divisor.
\begin{definition}\label{Def:pair-exp}
Denote by $\cT:=\cTnaive(\sA,\sD)$ the category of na\"ive expansions of the pair $(\sA, \sD)$. Its universal family is accordingly denoted $(\sA^{\exp}, \sD^{\exp})\to \cT$.
\end{definition}

We can use this to redefine expansions for any pair:
\begin{definition}\label{Def:pair-exp-Y}
Let $(Y, D)$ be a pair with associated flat morphism $Y \to \sA$.  An {\em expansion of $(Y,D)$ over $S$} is a commutative diagram with fiber square
$$
\xymatrix{
(Y',D') \ar[r]\ar[d] & (\sA',\sD')\ar[d]\\
(Y,D)_S \ar[r]\ar[dr] & (\sA,\sD)_S\ar[d]\\
& S,}
$$
where the right column is a na\"\i ve expansion of $(\sA,\sD)$, and the arrow $(Y,D)_S \to (\sA,\sD)_S$ in the middle is the canonical arrow. 
\end{definition}

\begin{remark} We could define a stack $\cT(Y,D)$ parametrizing expansions of $(Y,D)$ but the universal property of fibered products gives a morphism $\cT \to \cT(Y,D)$, sending $(\sA',\sD')\to \sA$ to the diagram above,  which is evidently an isomorphism. This means that the stack $\cT$ is the stack of expansions of any pair $(Y,D)$.
Note that this is defined without properness or connectedness assumptions, and readily applies to any geometry where $\sA$ is the moduli stack of line bundles with sections. Part (1) of Theorem \ref{Th:main-T} says in particular that $\cT$ is an  {\em algebraic} stack.

Having the universal case $Y=\sA$ is one  reason to allow $Y$
to be a stack rather than an algebraic space. 
\end{remark}

Note that the arrow $(Y',D') \to (Y,D)_S$ on the left of the diagram above is a na\"\i ve expansion. This follows since na\"\i ve expansions satisfy a basic compatibility for flat morphisms:\Dan{this needs to be moved}
\begin{lemma}\label{Lem:exp-basic-comp}
Let $f:Z \to Y$ be a flat morphism and $(Y',D') \to (Y,D)$ a na\"\i ve 
expansion. Let $Z' = Z\times_YY'$, $E =Z\times_YD$   and
$E=Z\times_YD'$. Then $(Z',E') \to (Z,E)$ is a na\"\i ve expansion.
\end{lemma}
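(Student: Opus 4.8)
The plan is to check, one by one, the conditions of Definition~\ref{Def:pair-naive} for the map $(Z',E')\to(Z,E)$: that both are pairs, that $Z'\to Z$ is proper, that $(Z',E')$ is flat and locally of finite presentation over the base $S$ of the expansion, and---the only nontrivial point---that every geometric fiber is a standard expansion of $(Z,E)$. (Here I read the second displayed equality in the statement as $E'=Z\times_Y D'$.)

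The reductions all come from flat base change. Since $f$ is flat and $D\subset Y$ is Cartier, its preimage $E=Z\times_Y D$ is Cartier with $\cO_Z(E)=f^*\cO_Y(D)$, so $(Z,E)$ is a pair. The projection $Z'=Z\times_Y Y'\to Y'$ is the base change of $f$ along $Y'\to Y$, hence again flat and locally of finite presentation; since $E'=Z'\times_{Y'}D'$ and $D'$ is Cartier in $Y'$, it follows that $E'$ is Cartier in $Z'$ and $(Z',E')$ is a pair. Properness of $Z'\to Z$ holds because it is the base change along $f$ of the proper map $Y'\to Y$. Finally, factoring $Z'\to Y'\to S$, the first arrow is flat and locally of finite presentation by the previous sentence and the second is so by hypothesis, so $(Z',E')\to S$ is flat and locally of finite presentation.

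The heart of the matter is the fibral condition. Fix a geometric point $s\to S$; by hypothesis there is an isomorphism $Y'_s\simeq Y_s[\ell]$ over $Y_s$ for some $\ell\geq 0$, and I must identify $Z'_s=Z_s\times_{Y_s}Y'_s\simeq Z_s\times_{Y_s}Y_s[\ell]$ with the standard expansion $Z_s[\ell]$. This amounts to showing that the standard-expansion construction commutes with flat base change of pairs. Writing $Y_s[\ell]=Y_s\sqcup_{D_s}P\sqcup_{D_s}\cdots\sqcup_{D_s}P$ with $P=\PP_{D_s}(\cO_{D_s}(D_s)\oplus\cO)$, base change along the flat map $Z_s\to Y_s$ (itself the base change of $f$ along $Y_s\to Y$) carries $Y_s$ to $Z_s$, each gluing divisor $D_s$ to its preimage $E_s$, and each $P$ to $Z_s\times_{Y_s}P$. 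Since $P\to Y_s$ factors through $D_s$, this last fiber product equals $E_s\times_{D_s}P=\PP_{E_s}(\cO_{E_s}(E_s)\oplus\cO)$, where I use that the projective completion of a line bundle commutes with base change together with the identification $\cO_{E_s}(E_s)=f^*\cO_{D_s}(D_s)$ from the previous paragraph. Hence $Z'_s\simeq Z_s[\ell]$, compatibly with the maps to $(Z_s,E_s)$, which is exactly the standard expansion of length $\ell$ of $(Z_s,E_s)$.

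I expect the main obstacle to be this very last identification---that forming the fiber product $Z_s\times_{Y_s}(-)$ commutes with the pushouts that glue the chain $Y_s[\ell]$ together. This is precisely where flatness of $f$ is used: it ensures that the Cartier divisor $D_s$ and its normal data pull back correctly, and that the closed immersions $D_s\hookrightarrow Y_s$ and $D_\pm\hookrightarrow P$ defining the gluing remain closed immersions with the expected scheme structure after base change, so that the Zariski gluing is preserved and no new components or embedded points are introduced.
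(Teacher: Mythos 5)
Your proof is correct and follows the same route as the paper: the paper's entire proof is the one-line observation that $f^*\cO_Y(D) = \cO_Z(E)$, which is exactly the key identity you isolate (it is what makes $P$ pull back to $\PP_{E_s}(\cO_{E_s}(E_s)\oplus\cO)$ and hence standard expansions pull back to standard expansions). You have simply written out in full the routine base-change verifications that the paper leaves implicit.
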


\begin{proof}
This follows since $f^*O_Y(D) = O_Z(E)$.
\end{proof}

\subsection{Types of degenerations}

Before considering expanded degenerations, we consider which types of degenerations we want to study.
\subsubsection{Basic degenerations}\label{Sec:basic-deg}
The most basic case of degeneration considered here is a flat morphism
$p : X \rightarrow B$ from a smooth variety to a smooth curve, with
unique critical value $b_0\in B$, where the fiber is the union of two
smooth varieties, $Y_1$ and $Y_2$, meeting along a smooth subvariety $D$ which
is itself a divisor in $Y_1$ and $Y_2$. Note that we have an
isomorphism of line bundles $\cO_X(Y_1) \otimes \cO_X(Y_2) \simeq
p^*\cO_B(b_0)$ such that the defining sections are compatible: $\mathbf{1}_{Y_1}
\tensor \mathbf{1}_{Y_2} = p^* \mathbf{1}_{b_0}$. 

\subsubsection{Degenerate fiber}\label{Sec:deg-deg} Along with the above case one might
consier the degenerate fiber abstractly: we have a scheme $X =
Y_1\sqcup_D Y_2$, with a specified isomorphism $\cO_{Y_1}(D)|_D \otimes
\cO_{Y_2}(D)|_D \simeq \cO_D$. 

\subsubsection{Letting the degeneration vary}\label{Sec:general-deg} Still more generally, we
can remove the assmption that $B$ is a curve or $b_0$ is a point: assume
$B$ is a scheme or algebraic stack, $\cL$ an invertible sheaf and $s:
\cL \to \cO_V$ a 
sheaf homomorphism, with associated closed subscheme $B_0 = \Spec
\cO_B/s(\cL)$. Consider a flat surjective morphism $p:X \to B$, two invertible 
sheaf homomorphisms $s_1:\cL_1\to \cO_X$ and $s_2:\cL_2\to \cO_X$ with
associated closed subschemes $Y_1$ and $Y_2$ intersecting along a subscheme
$D$, and an isomorphsim $\cL_1\otimes \cL_2 
\xrightarrow{\sim} p^* \cL$ carrying $s_1\otimes s_2$ to $p^*s$. We impose the
following non-degeneracy assumption: $D\subset Y_i$ should be a
Cartier divisor. It is clear that cases \ref{Sec:basic-deg} and
\ref{Sec:deg-deg} are special cases of \ref{Sec:general-deg}. 

\subsubsection{The universal case}\label{Sec:universal-deg}  A special case of
\ref{Sec:general-deg} is the following situation, which is the reason
why we want to allow $X$ to be a stack: let $X = \sA^2$, $B= \sA$ and
$p: X \to B$ induced by the multiplication map $\AA^2 \to \AA^1$
sending $(x_1,x_2)$ to $x_1x_2$. Let $Y_1 = \sD_1\times \sA$, $Y_2 =
\sD_2\times \sA$ and $B_0= \sD$. 

Clearly this satisfies the
non-degeneracy assumption in \ref{Sec:general-deg}. On the other hand,
given a situation as in \ref{Sec:general-deg}, the data $(\cL_i, s_i)$
and $(\cL_i, s_i)$ provide a 
canonical commutative diagram 
\begin{equation}\label{Eq:universal-deg}\xymatrix{
X \ar[r]\ar[d]& \sA^2\ar[d]\\ B \ar[r]& \sA.
}
\end{equation}
Nondegeneracy amounts to flatness of the map $X \to B\times_\sA
\sA^2$. So we can think of $\sA^2 \to \sA$ as the universal case of a degeneration.

In the following discussion we allow $X\to B$ to be any of the above.

\subsection{Na\"{\i}ve and universal expanded degenerations}
\label{sec:exp-deg}

 Consider the situation in \ref{Sec:general-deg}. We make definitions analogous to Definitions \ref{Def:standard-pair} and \ref{Def:pair-naive}.

\begin{definition}\label{Def:standard-deg} Let $b$ be a geometric point of $B_0\subset B$ with degenerate fiber $X_b = Y_1\sqcup_DY_2$.
The \emph{standard expansion} of length $\ell\geq 0$ of $X_b$ is a morphism obtained by gluing $\ell$ copies of $P$ to $Y$ along the distinguished divisors:
\begin{equation*}
  X_v[\ell] := Y_1 \mathop{\sqcup}\limits_{D=D_-}  P \mathop{\sqcup}\limits_{D_+=D_-}  \cdots   \mathop{\sqcup}\limits_{D_+=D_-}  P\mathop{\sqcup}\limits_{D_+=D} 
  Y_2
\end{equation*}
where $P = \PP(\cO_{Y_1}(D)|_D \oplus \cO)$.  There is a natural morphism $X_v[\ell]\to X_v$ contracting all the copies of $p$ to $D$. We include the case where  $X_b$ is smooth, declaring the standard expansion of such $X_b$ (necessarily of length 0)  to be the identity $X_b \to X_b$.
\end{definition}

\begin{definition}\label{Def:deg-naive} \begin{enumerate}[leftmargin=0pt,itemindent=3em]
\item A na\"\i ve expansion of $X \to B$ over a base scheme $S$ is a commutative diagram 
$$\xymatrix{
X' \ar[r]\ar[d] & X \ar[d] \\
S \ar[r] & B
}$$
where $X' \to S$ is flat and locally of finite presentation and such that each geometric fiber is  isomorphic over $X$ to a standard expansion.
\item 
We denote by $\fTnaive(X/B,B_0)$ the category of na\"\i ve expansions of $X \to B$, with arrows given by pullback diagrams, and by $X^{\exp}_\naive \to \fTnaive(X/B,B_0)$ the universal na\"\i ve expansion.
\end{enumerate}
\end{definition}

As with Definition \ref{Def:pair-naive} this can be viewed as a category even when $X$ or $B$ is a stack. 
As with Lemma \ref{Lem:exp-basic-comp}, one can pull back naive expanded degenerations along \'etale morphisms $Z \to X$.  As with Lemma \ref{Lem:T-stack}, the definition of arrows in terms of fibered diagrams immediately gives:

\begin{lemma}
The category $\fTnaive(X/B,B_0)$ is a stack.
\end{lemma}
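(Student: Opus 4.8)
The plan is to mirror the proof of Lemma~\ref{Lem:T-stack}, since the only structural inputs needed are that pullbacks of na\"\i ve expansions are again na\"\i ve expansions and that arrows are fiber diagrams. First I would verify that $\fTnaive(X/B,B_0)$ is fibered in groupoids over the category of schemes. Given a na\"\i ve expansion $X'/S$ and a morphism of schemes $T\to S$, form $X'_T := X'\times_S T$. Flatness and local finite presentation are stable under base change, the composite $X'_T\to X'\to X$ fits into the required commutative square over $T\to S\to B$, and each geometric fiber of $X'_T/T$ agrees with a geometric fiber of $X'/S$ and is therefore a standard expansion. Hence pullbacks exist and are again na\"\i ve expansions. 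Because arrows are by definition fiber diagrams, every arrow is cartesian and any arrow lying over an identity is an isomorphism, so the fibered category is fibered in groupoids.

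It then remains to check the two descent conditions for the chosen (fppf or \'etale) topology on schemes: that morphisms form a sheaf, and that descent data for objects are effective. For morphisms, a map between two na\"\i ve expansions over a fixed $S$ is an isomorphism of the total spaces compatible with the maps to $X$; since $\uIsom$ of algebraic spaces (resp.\ stacks) over $X$ is a sheaf, morphisms descend. For objects, given a cover $\{S_i\to S\}$, na\"\i ve expansions $X'_i/S_i$, and gluing isomorphisms satisfying the cocycle condition, effectivity follows because algebraic spaces (resp.\ algebraic stacks) themselves satisfy effective descent: the $X'_i$ glue to an algebraic space or stack $X'/S$, the maps to $X$ glue by descent of morphisms, and the defining properties---flatness, local finite presentation, and the geometric-fiber condition---are all local on the base for the chosen topology, hence hold for $X'/S$.

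The crux, exactly as in Lemma~\ref{Lem:T-stack}, is that the definition permits $X'$ to be an algebraic space or a stack rather than a scheme; this is precisely what makes effectivity of descent automatic, since gluing schemes need not yield a scheme (cf.\ Remark~\ref{Rem:Li-etale}). The only genuine verification is therefore that each clause of Definition~\ref{Def:deg-naive} is local on $S$, which is routine. When $X$ or $B$ is a stack, one first invokes the representability remarks following Definition~\ref{Def:deg-naive} to collapse the a priori $2$-category to a category, after which the argument above applies verbatim.
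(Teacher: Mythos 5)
Your proof is correct and follows essentially the same route as the paper, which simply invokes the argument of Lemma~\ref{Lem:T-stack}: pullbacks of na\"\i ve expansions are na\"\i ve expansions, arrows are fiber diagrams so the category is fibered in groupoids, and descent is automatic precisely because $X'$ is allowed to be an algebraic space or stack. Your write-up just makes explicit the details (base-change stability of flatness, local finite presentation, and the fiber condition; sheafiness of $\uIsom$; locality of the defining properties) that the paper leaves implicit.
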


But as with expanded pairs, this is a problematic category in general: see Section~\ref{Sec:trouble}. We instead rely on the universal situation. The following lemma, which  follows directly from the definition, allows us to work with an arbitrary base.

\begin{lemma}\label{Lem:basechangeT} Given a morphism $\phi:\tilde B \to B$, denote $\tilde B_0 =\phi^{-1} B_0$ and  $\tilde X = X \times_B \tilde B$. Then 
$$\fTnaive(\tilde X/\tilde B,\tilde B_0) \simeq \fTnaive(X/B,B_0)\times_B \tilde B.$$
\end{lemma}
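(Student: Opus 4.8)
The plan is to exhibit an equivalence of the two fibered categories directly, with all of the content supplied by the universal property of the fiber product $\tilde X = X\times_B \tilde B$; no geometry beyond unwinding Definition~\ref{Def:deg-naive} is needed. First I would make the structural functor $\fTnaive(X/B,B_0)\to B$ explicit, since it is this functor along which the right-hand side is base changed: an object over a scheme $S$ is a morphism $s\colon S\to B$ together with an $S$-flat, locally finitely presented family $X'\to X$ lying over $s$ whose geometric fibers are standard expansions, and the functor to $B$ remembers only $s$. Consequently an object of the $2$-fiber product $\fTnaive(X/B,B_0)\times_B\tilde B$ over $S$ is the datum of a morphism $g\colon S\to \tilde B$, an object $(s, X'\to X)$ as above, and an identification $\phi\circ g = s$ in $B$; after using this identification to eliminate $s$, it is simply a morphism $g\colon S\to\tilde B$ together with an $S$-flat family $X'\to X$ over $\phi\circ g$ with the prescribed fibers.

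On the other side, an object of $\fTnaive(\tilde X/\tilde B,\tilde B_0)$ over $S$ is a morphism $g\colon S\to\tilde B$ together with an $S$-flat, locally finitely presented family $\tilde X'\to \tilde X$ over $g$ with geometric fibers standard expansions. I would define the comparison functor by sending such $\tilde X'\to \tilde X$ to the composite $\tilde X'\to \tilde X\to X$, keeping $g$ fixed. For the inverse, given $X'\to X$ over $\phi\circ g$ together with $g$, the two morphisms $X'\to X$ and $X'\to S\xrightarrow{g}\tilde B$ become equal after composition to $B$, so the universal property of $\tilde X=X\times_B\tilde B$ furnishes a unique morphism $X'\to\tilde X$ over $\tilde B$; this recovers the object on the left with the \emph{same} underlying family $X'\to S$. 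Because the underlying $S$-family is literally unchanged by either construction, flatness and local finite presentation transfer for free, and the uniqueness clause of the universal property shows the two constructions are mutually inverse.

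The one verification that requires a moment's attention is the geometric-fiber condition. For a geometric point $\Spec\Omega\to S$ with image $\tilde b\in\tilde B$ and $b=\phi(\tilde b)$, there is a canonical identification $\tilde X\times_{\tilde B}\Omega = (X\times_B\tilde B)\times_{\tilde B}\Omega = X\times_B\Omega$, and under it the fiber map induced by $X'\to\tilde X$ coincides with the one induced by $X'\to X$ (since $X'\to\tilde X\to X$ is the given map). Hence $X'_\Omega$ is isomorphic over $X$ to a standard expansion of $X_b$ exactly when it is isomorphic over $\tilde X$ to a standard expansion of $\tilde X_{\tilde b}$, so the two fiber conditions are identical; in particular, as $\tilde X\to\tilde B$ degenerates precisely over $\tilde B_0=\phi^{-1}B_0$, the notation is consistent.

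Finally I would note that arrows on both sides are fiber diagrams in the sense of Definition~\ref{Def:deg-naive}(2) and correspond under the functor, and that the representability of $X'\to X$ (as in the discussion following Definition~\ref{Def:pair-naive}) collapses the ambient $2$-category so that the equivalence is in fact an isomorphism of stacks, compatible with pullback in $S$. I do not expect a genuine obstacle here; the only thing to keep straight is the identification $\phi\circ g=s$ defining the $2$-fiber product, and this is exactly the coincidence that the universal property of $\tilde X$ absorbs.
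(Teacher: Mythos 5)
Your proof is correct, and it is exactly the argument the paper has in mind: the paper offers no written proof at all, asserting only that the lemma ``follows directly from the definition,'' and your unwinding via the universal property of $\tilde X = X\times_B\tilde B$ is the direct verification being alluded to. The points you flag (the fiber condition being insensitive to whether one works over $X$ or $\tilde X$, and the $2$-categorical collapse via representability of $X'\to X$) are precisely the details the paper leaves implicit.
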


Our definition is:
\begin{definition} \label{Def:deg-exp} Denote $\fT := \fTnaive(\sA^2/\sA,\sD)$, with universal family $$(\sA^2)^{\exp} \to \fT.$$ 
\end{definition}

\begin{definition} \label{Def:deg-exp-X}
For a general degeneration $X \to B$ as in \ref{Sec:general-deg}, let $B \to \sA$ be the morphism associated to $(\sL, s)$ and $X \to \sA^2$ associated to $(\sL_1,s_1)$ and $(\sL_2,s_2)$. Given a $B$-scheme $S$, an expansion of $X/B$ over $S$ is a commutative diagram with cartesian square
$$\xymatrix{
X' \ar[r]\ar[d] & (\sA^2)'\ar[d]\\
X\ar[r]\ar[rd] & \sA^2\times_\sA S\ar[d]\\
 & S
}
$$
in which $(\sA^2)'$ is a na\"ive expansion of $\sA^2$ over $S$.
\end{definition}
\begin{remark}
Again we could define a stack  of expansions $\fT(X/B,B_0)$, but by the universal property of fibered products we have a canonical isomorphism
 $\fT(X/B,B_0):= \fT\times_\sA B$,  with universal expansion $$\sX^{\exp} := \sX \times_{\sA^2}  (\sA^2)^{\exp} \to \fT(X/B,B_0).$$ This makes $\fT\times_\sA B$ into the stack of expansions of any degeneration over $B$.
 \end{remark}

\begin{remark}
There is a similarity between expanded pairs and expanded degenerations which can be misleading. Consider a smooth pair $(Y,D)$ over a field $k$. We can define a morphism $Y\to \sA^2$ whose first factor is defined by $(\cO_Y(D), {\mathbf 1}_D)$ and second factor defined by $(\cO_Y(-D), 0)$. This gives a commutative diagram
  \begin{equation*}
    \xymatrix{
      Y \ar[r] \ar[d] & \sA^2 \ar[d]^m \\
      \Spec k \ar[r] & \sA .
    }
  \end{equation*}
  It is tempting therefore to try to view an expanded pair as a special kind of expanded degeneration over $\fT_0 = \Spec k\times_\sA \fT$.  Indeed, the geometric fibers of $Y' = Y\times_\sA (\sA^2)^{\exp}$ are expansions of $(Y, D)$.  However, since $Y \to \Spec k\times_\sA \sA^2$ is not flat, the resulting map $Y'\to \fT_0$ is not flat, so this is not an expanded pair.
\end{remark}

\subsection{Twisted expansions of pairs and degenerations}
In the papers  \cite{AF} one uses auxiliary stack structures on expansions in order to circumvent difficult deformation aspects of the degeneration formula. This formalism is used in \cite{ACW} to compare orbifold Gromov--Witten invariants to relative Gromov--Witten invariants. Here we lay the foundations, which become straightforward using our universal approach. We build on the notation $P$ and $D_\pm$ of \ref{Def:standard-pair}.

\begin{definition}\label{Def:twisted-standard}
\begin{enumerate}[leftmargin=0pt,itemindent=3em]
\item Let $(Y,D)$ be a pair. Fix a non-negative integer $\ell$ and a tuple of  positive integers $\br:= (r_0,\ldots,r_\ell)$. Consider the root stacks $Y_{r_0} = Y(\sqrt[r_0]{D})$ and for $i=1,\ldots \ell$ write $P_i = P(\sqrt[r_{i-1}]{D_-}, \sqrt[r_{i}]{D_+})$. We use the notation $\cD_{i\ \pm}$ for the resulting root divisors;  for $i=1,\ldots \ell-1$ consider the unique isomorphism $\phi_i:\cD_{i\ +} \simeq \cD_{(i+1)\ -}$ which is band-reversing, in the sense that it identifies the normal bundle of  $\cD_{i\ +}$ with the dual of the normal bundle of $ \cD_{(i+1)\ -}$. 

The standard $\br$-twisted expansion of length $\ell$ of $(Y,D)$ is the stack
\begin{equation*}
  Y[\ell,\br]  :=  Y_{r_0}\mathop{\sqcup}\limits_{\cD\mathop{=}\limits_{\phi_0}\cD_-}  P_1 \mathop{\sqcup}\limits_{\cD_+\mathop{=}\limits_{\phi_1}\cD_-}  \cdots   \mathop{\sqcup}\limits_{\cD_+\mathop{=}\limits_{\phi_\ell}\cD_-}  P_\ell.
\end{equation*}

The notation means to convey that $Y_{r_0}$ is joined to $P_1$, and $P_i$ to $P_{i+1}$, by identifying $\cD_{i\ +}$ with $\cD_{(i+1)\ -}$ through $\phi_i$. 
\item 
Similarly, given a degeneration and $b\in B_0$, the standard $\br$-twisted expansion of length $\ell$ of $X_b$ is the stack

\begin{equation*}
  X_b[\ell,\br]  :=  (Y_1)_{r_0}\mathop{\sqcup}\limits_{\cD\mathop{=}\limits_{\phi_0}\cD_-}  P_1 \mathop{\sqcup}\limits_{\cD_+\mathop{=}\limits_{\phi_1}\cD_-}  \cdots   \mathop{\sqcup}\limits_{\cD_+\mathop{=}\limits_{\phi_\ell}\cD_-}  (Y_2)_{r_\ell} 
\end{equation*}
where  $(Y_2)_{r_\ell} = Y_2(\sqrt[r_\ell]{D})$ and the rest of the notation as above.
\end{enumerate}
\end{definition}

We use the notation $\fTnaivetw(X/B,B_0)$ and $\cTnaivetw(Y,D)$ for the stacks of flat families whose fibers are isomorphic to twisted standard expansions. We will use it only briefly in the next section.

We can now define stacks of expansions:

\begin{definition}\label{Def:twisted-exp}
\begin{enumerate}[leftmargin=0pt,itemindent=3em]
\item Denote by $\cTtw$ the category whose objects over a scheme $S$ are flat families of pairs $(\sA',\sD') \to S$ with a morphism $(\sA',\sD')\to (\sA,\sD)$, such that the geometric fibers over $S$ are isomorphic over $(\sA,\sD)$ to standard $\br$-twisted expansions of $(\sA,\sD)$. We call this the stack of twisted expanded pairs, the universal expansion denoted by $(\sA^{\exp\,\tw}, \sD^{\exp\,\tw})$.

For an arbitrary pair, the stack of twisted expansions  of $(Y,D)$ is again $\cTtw(Y,D):= \cTtw$, with the universal expansion $(Y\times _\sA \sA^{\exp\,\tw}, D\times_\cD \cD^{\exp\,\tw})$. 
\item Denote by $\fTtw$ the category whose objects over a scheme $S$ are flat families of pairs $(\sA^2)' \to S$ with a morphism $(\sA^2)'\to \sA^2$, such that the geometric fibers over $S$ are isomorphic over $\sA^2$ to standard $\br$-twisted expansions of the  degeneration $\sA^2\to \sA$ of \ref{Sec:universal-deg}. We call this the stack of twisted expanded degenerations, with universal expansion $(\sA^2)^{\exp\,\tw}$.\Dan{can we find a way around double scripts?}

For an arbitrary degeneration as in \ref{Sec:general-deg} the stack of twisted expansions  of $X$ is again $\fTtw\times_\sA B$, with the universal expansion $X\times _{\sA^2} (\sA^2)^{\exp\,\tw}$. 
\end{enumerate}
\end{definition}

\begin{remark} 
When $(Y,D) = (\sA,\sD)$ it is not hard to see that $P_i$ are actually isomorphic to $P$, and the stack underlying the standard twisted expansion is
$\sA':=\sA \sqcup P\sqcup\cdots \sqcup P$, not depending on the twisting. What varies with the twisting is the morphism  $\sA'\to \sA$.
\end{remark}

\section{The stacks of expansions are algebraic}

\subsection{Semistable maps and the stack of Graber--Vakil}

Recall that a pre-stable marked curve $C$ is called {\em semistable} if any unstable component is rational and has precisely two special geometric points, counting nodes and marked points of $C$.

In \cite{GV}, Graber and Vakil introduce a stack of expanded pairs as follows:
\begin{definition}\label{Def:TGV}
Consider the stack $\fM_{0,3}$ of 3-pointed genus $0$ pre-stable curves and its substack $\fM^\sst_{0,3}$ of semistable curves. We denote the markings by $0,1,\infty$.  Denote by $\TGV$ the substack where the points marked $1,\infty$ lie on the same irreducible component of the curve. 
\end{definition}

The stack $\fM_{0,3}$ is well known to be algebraic and locally of finite type over $\ZZ$; the substacks  $\fM^\sst_{0,3}$ and  $\TGV$  are open, so they are also  algebraic and locally of finite type over $\ZZ$. Graber and Vakil noted that Li's construction of expanded pairs should be independent of the choice of $Y$. Moreover the objects of the stack $\TGV$ are (as we see below) expansions of   $(\PP^1,0)$. The idea is that once we know what to do to the pair $(\PP^1,0)$ we should just ``do the same" to an arbitrary pair. Since an arbitrary pair does not map to $(\PP^1,0)$, we preferred to go by way of the pair $(\sA,\sD)$. This will require just one more step to show that the stack is algebraic. 

In order to work in closer parallel with expanded pairs and degenerations, we go by way of a variant of $\TGV$:

\begin{definition}
Consider the stack $\fM^\tw_{0,1}(\PP^1,1)$ of (not necessarily representable) maps  of degree 1 from 1-pointed twisted prestable curves of genus 0 to $\PP^1$.  The marked point is permitted to have a nontrivial structure.  This stack has a substack $\fM^{\tw\, \sst}_{0,1}(\PP^1,1)$ where the maps are  semistable. Consider the evaluation map  $e:\fM^{\tw\, \sst}_{0,1}(\PP^1,1)\to \PP^1$. Define $$\cTmapstw := e^{-1}\{0\}$$ and $\cTmaps \subset \cTmapstw$ the open substack of untwisted curves.\Dan{verify that this has the right scheme structure} 
\end{definition}

The reader interested only in the untwisted case can safely ignore all references to twisted curves.

Note again that  the stack $\fM^\tw_{0,1}(\PP^1,1)$  is well known to be algebraic and  locally of finite type over $\ZZ$ (see \cite{AOV} for the general  case over $\ZZ$). The stack $\fM^{\tw\,\sst}_{0,1}(\PP^1,1)$ is algebraic as it is an open substack of $\fM^\tw _{0,1}(\PP^1,1)$, and 
$\cTmapstw = e^{-1}\{0\}$ is algebraic as it is closed in  $\fM^{\tw\,\sst}_{0,1}(\PP^1,1)$. The substack $\cTmaps$ is open in the latter, so is algebraic as well.

\begin{lemma}\label{Lem:TP1} We have isomorphisms $\cTnaivetw(\PP^1,D)\simeq \cTmapstw$ and $\cTnaive(\PP^1,D)\simeq \cTmaps$.  Hence $\cTnaivetw(\PP^1,D)$ is an algebraic stack, locally of finite type over $\ZZ$ and $\cTnaive(\PP^1,D)$ an open substack.\Dan{should we avoid the naive thing and go directly to the proof?}
\end{lemma}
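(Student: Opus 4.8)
The plan is to exhibit mutually quasi-inverse functors between $\cTnaivetw(\PP^1,D)$ and $\cTmapstw$ (and between their untwisted open substacks), the point being that both are fibered categories recording \emph{the same} geometric data read in two ways. We may take $D=\{0\}$ a single reduced point, so that $P\simeq\PP^1$ and a standard (twisted) expansion is a chain of $\PP^1$'s. By Definition~\ref{Def:pair-naive} a twisted naive expansion $(Y',D')\to(\PP^1,0)$ over $S$ already comes with a proper map $f\colon Y'\to\PP^1$ and the distinguished section $\sigma=D'$; we read this as a family of one-pointed twisted prestable maps. Conversely an object $(f\colon C\to\PP^1,\sigma)$ of $\cTmapstw$ has $f(\sigma)=e(\sigma)=0$, so $f$ is a morphism of pairs $(C,\sigma)\to(\PP^1,0)$, which we read as an expansion. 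Since neither construction alters the underlying data, the two functors are visibly mutually inverse once the defining conditions on families are matched; morphisms correspond because both stacks use cartesian diagrams over the fixed target $\PP^1$. Everything thus reduces to comparing fiberwise conditions.

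For the forward direction I would check that the contraction $\PP^1[\ell,\br]\to\PP^1$ of Definition~\ref{Def:twisted-standard} is a semistable degree-$1$ genus-$0$ one-pointed twisted map with $e(\sigma)=0$. Genus $0$ and degree $1$ are immediate, since $Y_{r_0}$ dominates $\PP^1$ with coarse map an isomorphism while every $P_i$ is contracted to $0$; and $e(\sigma)=0$ holds because $\sigma$ lies over $D=0$. Semistability is the statement that each contracted component has at least two special points: the interior $P_i$ carry two nodes, $P_\ell$ carries one node and the marking, and $Y_{r_0}$ is non-contracted (hence exempt). The cyclic stabilizers $\bmu_{r_0},\dots,\bmu_{r_{\ell-1}}$ at the nodes and $\bmu_{r_\ell}$ at $\sigma$ exhibit this as a twisted semistable curve, so the contraction defines an object of $\cTmapstw$, functorially in $S$.

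The real obstacle is the reverse classification: a geometric fiber of an object of $\cTmapstw$ is a standard twisted expansion. Given such $f\colon C\to\PP^1$ with $f(\sigma)=0$, exactly one component $C_0$ dominates $\PP^1$ (with coarse map an isomorphism) and all others are contracted. I would root the (genus-$0$, hence tree-like) dual graph at $C_0$. Adjacent contracted components map to a common point of $\PP^1$, so each connected contracted subtree maps to a single point and meets $C_0$ in exactly one node. Semistability forbids contracted components with fewer than two special points, so every leaf of such a subtree—away from its attaching node—must carry a special point that is not a node, i.e. the marking; since there is only one marked point, this rules out all branching and forces at most one contracted subtree, a chain terminating at $\sigma$. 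That chain is attached to $C_0$ at the point over $f(\sigma)=0$, so $C$ is precisely $\PP^1[\ell,\br]$ with $\ell$ the length of the chain, the orders $r_0,\dots,r_{\ell-1}$ the nodal stabilizer orders and $r_\ell$ the order at $\sigma$ (the case $\ell=0$ being $C=C_0$ marked at $0$). This identifies the fiber with a standard expansion over $(\PP^1,0)$ and so matches the fiberwise conditions on both sides.

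Finally I would record that the condition defining $\cTmapstw=e^{-1}\{0\}$ on $S$-points—that $f\circ\sigma$ factor through $0\hookrightarrow\PP^1$—agrees scheme-theoretically with the requirement that $\sigma$ be the distinguished divisor lying over $D=0$, so the two stack structures correspond and the evaluation fiber carries its expected structure. Algebraicity and local finiteness over $\ZZ$ for $\cTnaivetw(\PP^1,D)$ then transport from $\cTmapstw$, already shown algebraic; and since the equivalence carries untwisted curves to untwisted expansions, it restricts to the isomorphism $\cTnaive(\PP^1,D)\simeq\cTmaps$ of open substacks.
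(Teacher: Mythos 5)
Your proposal is correct and follows essentially the same route as the paper: both sides are compared by exhibiting quasi-inverse, base-preserving functors and then classifying geometric fibers, with your rooted-dual-graph/leaf-counting argument playing the role of the paper's induction-by-pruning of end components. The only detail worth adding is the one the paper makes explicit when regluing the pruned component: identifying the resulting chain with the standard twisted expansion $\PP^1[\ell,\br]$ of Definition~\ref{Def:twisted-standard} uses that twisted curves are by definition balanced, so the stack structure at each node is automatically glued by a band-reversing isomorphism.
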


\begin{proof}
We provide a base-preserving equivalence of categories. Consider an object $(Y',D') \to (\PP^1,0)$ in $ \cTnaivetw(\PP^1,D)(S)$. Then every geometric fiber is by definition a semistable map, since any unstable component is a root stack of $P \simeq \PP^1$ along its two special points, $D_-$ and $D_+$. Since $D'$ maps to $D$ we have that $e$ is identically $0$. This gives an object of $\cTmapstw$. This identification is clearly compatible with pullbacks, giving a functor $\cTnaivetw(\PP^1,D)\to \cTmapstw$, preserving the twisting.

Similarly, an object of  $\cTmapstw$ is a flat family of twisted pointed curves $(Y',D')$ with a map to $(\PP^1,D)$. We need to verify that the fibers are expansions. We apply induction on the number of irreducible components.  There is a unique component of $Y'$ mapping with degree $1$ onto $\PP^1$; if it is the unique component the map is a root stack at $0$ and we are done. If there is another component, consider an end component on the same fiber of $Y'\to \PP^1$. Since the map is semistable, that end component must contain the marked divisor $D'$ and a unique node, which we identify as $\PP^1_D$. Pruning that component off, we get a new semistable map $(Y'',D'') \to (\PP^1,D)$ which is a twisted expansion by induction. Since $Y'$ is obtained by gluing the end component to $Y''$, and since twisted curves are by definition balanced, we conclude that $(Y',D')$ is a twisted expansion.  Again this identification is compatible with pullbacks, giving a functor $\cTmapstw \to\cTnaivetw(\PP^1,D)$.

The compositions of these functors in either way  are the identity, giving the required equivalence.
\end{proof}

We relate these to $\TGV$ using the following well known result. 
\begin{lemma}\label{Lem:GV-maps}
We have an isomorphism $\TGV\simeq \Tmaps$.
\end{lemma}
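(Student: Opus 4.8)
The plan is to exhibit mutually inverse functors between $\TGV$ and $\Tmaps=\cTmaps$ that are compatible with base change, hence define an isomorphism of stacks. The guiding dictionary is that, for a degree-one map $f\colon C\to\PP^1$ from a genus-$0$ semistable curve, there is a unique component $C_0$ on which $f$ is an isomorphism (all others being contracted), so the three points $0,1,\infty$ pull back to three distinguished points of $C_0$; two of these, the preimages of $1$ and $\infty$, will become the markings $p_1,p_\infty$ of $\TGV$, while the marking over $0$ plays the role of $p_0$.

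In the direction $\Phi\colon\Tmaps\to\TGV$: given a family $(C\to S,\ x,\ f)$ in $\cTmaps$, the marked point $x$ lies over $0$, and since (as in the proof of Lemma~\ref{Lem:TP1}) all contracted components lie over $0$, the map $f$ is an isomorphism in a neighborhood of $\{1,\infty\}$. Hence $f^{-1}(1)$ and $f^{-1}(\infty)$ are sections of $C/S$ lying in the smooth locus, on the distinguished component. Setting $p_0:=x$, $p_1:=f^{-1}(1)$, $p_\infty:=f^{-1}(\infty)$ produces a $3$-pointed prestable curve; I would check that it is semistable (adding $p_1,p_\infty$ to the one non-contracted component cannot destabilize any component) and that $p_1,p_\infty$ lie on the same component, so that the result lies in $\TGV$. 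This is manifestly compatible with base change, since the formation of $f^{-1}(1),f^{-1}(\infty)$ is.

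For the inverse $\Psi\colon\TGV\to\Tmaps$, the key device is stabilization together with the rigidity of $\overline{M}_{0,3}$. Given $(C;p_0,p_1,p_\infty)$ in $\TGV$, I would form the canonical contraction $c\colon C\to\overline{C}$ to its stable $3$-pointed model (Knudsen stabilization, which exists and commutes with base change). Since $\overline{M}_{0,3}\simeq\Spec\ZZ$ with universal curve $(\PP^1;0,1,\infty)$, the model $\overline{C}$ is canonically identified with $\PP^1_S$ carrying its markings at $0,1,\infty$; composing gives a canonical map $f\colon C\to\PP^1$ with $f(p_0)=0$, $f(p_1)=1$, $f(p_\infty)=\infty$. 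One checks that $f$ has fiberwise degree $1$ (it is an isomorphism on the one non-contracted component) and that the pointed curve $(C;p_0)$ together with $f$ is a semistable map: the components contracted by $f$ are exactly those contracted by stabilization, each of which still carries at least two special points after $p_1,p_\infty$ (which lie on $C_0$) are forgotten. Thus $\Psi$ lands in $\cTmaps=e^{-1}\{0\}$, and it is base-change compatible because stabilization is.

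Finally I would verify that $\Phi$ and $\Psi$ are mutually inverse. For $\Phi\circ\Psi$ one uses that the canonical identification sends $p_1,p_\infty$ to the reduced points $f^{-1}(1),f^{-1}(\infty)$; for $\Psi\circ\Phi$ one observes that the given degree-one map $f$ is itself a stabilization of $(C;x,f^{-1}(1),f^{-1}(\infty))$ --- it is an isomorphism on the stable part and contracts exactly the unstable components --- so by uniqueness of the stable model it agrees, under the rigidification of $\overline{M}_{0,3}$, with the map produced by $\Psi$. The main obstacle, and the only point requiring real care, is constructing $f$ canonically (with no residual automorphism ambiguity) and functorially in families from the $\TGV$ data; this is exactly what the triviality of $\overline{M}_{0,3}$ supplies, and it is also what forces the semistability bookkeeping under adding and forgetting the two markings $p_1,p_\infty$ to match on both sides.
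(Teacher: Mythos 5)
Your proof is correct and takes essentially the same route as the paper: both constructions agree on the functor $\Tmaps \to \TGV$ (marking $f^{-1}(1)$ and $f^{-1}(\infty)$), and both build the inverse by canonically contracting onto the component carrying the markings $1,\infty$ and identifying the image with $\PP^1_S$ via the three sections. The only cosmetic difference is the device used for that contraction: the paper uses the relatively base-point-free sheaf $\cO_{Y'}(r)$ associated to the point marked $\infty$, while you invoke Knudsen stabilization together with the rigidity of $\overline{M}_{0,3}$; these yield the same canonical morphism.
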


\begin{proof}
Given an object $f:(Y',D')\to \PP^1$ in $\Tmaps(S)$ we obtain an object $$\big(Y', D', f^{-1}(1), f^{-1}(\infty)\big)$$ of  $\TGV(S)$, which clearly gives a base-preserving functor $\Tmaps \to \TGV$. Given an object $(Y', p,q,r) \in \TGV(S)$ the invertible sheaf $\cO_{Y'}(r)$ is base-point-free relative to $S$. We obtain an associated morphism  $Y' \to \PP^1_S$ which maps $(p,q,r)$ to  $(0,1,\infty)$, which is an object of $\Tmaps$. It is easy to see that these functors are quasi-inverses, giving the required base-preserving equivalence.
\end{proof}

We now pass to degenerations, where the replacement for the pair $(\PP^1,0)$ requires more work.

\begin{definition}\label{Def:basicdeg} Consider $X$, the blowup of $ \AA^1 \times \PP^1$ at the origin, and $B = \AA^1$, with the projection morphism $X \to B$. The unique singular fiber lies over the origin $0\in \AA^1$ and has two components $Y_i$ intersecting at a unique point $D$.

Consider the stack $\fMtw_{0,0}(X/\AA^1,1)$ of maps of  prestable curves of genus 0 to the fibers of $X/\AA^1$, again of degree 1---that is, with image class equal to the class of the fiber.  Let $\fM^{\tw\,\sst}_{0,0}(X/\AA^1,1)$ be the substack of $\fMtw_{0,0}(X/\AA^1,1)$  parameterizing maps that are semistable and are isomorphisms away from $D$.  \Jonathan{this was incomplete; is this the correct way of completing it?}
\end{definition}

As before, $\fM^\tw _{0,0}(X/\AA^1,1)$ is well known to be algebraic and locally of finite type over $k$, and $\fM^{\tw\,\sst}_{0,0}(X/\AA^1,1)$ an open substack.

Repeating the arguments above we have:

\begin{lemma}\label{Lem:basicfTalg} We have an isomorphism $\fTnaivetw(X/\AA^1,1)\simeq \fM^{\tw\,\sst}_{0,0}(X/\AA^1,1)$.
In particular $\fTnaivetw(X/\AA^1,1)$ is an algebraic stack, locally of finite type over $k$.
\end{lemma}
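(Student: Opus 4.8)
The plan is to reproduce almost verbatim the two-functor equivalence used in the proof of Lemma~\ref{Lem:TP1}, now with the role of the target pair $(\PP^1,0)$ played by the fibers of the basic degeneration $X/\AA^1$ of Definition~\ref{Def:basicdeg}. The essential simplification is that here the divisor $D$ is a single reduced point, so each $\PP^1$-bundle $P$ and each twisted factor $P_i$ appearing in a standard $\br$-twisted expansion is just a (possibly root-stacky) $\PP^1$, and the components $Y_1,Y_2$ of the special fiber are themselves copies of $\PP^1$. Thus a standard twisted expansion of a fiber is literally a chain of rational components, which is the shape of a semistable genus-$0$ map.

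First I would construct the forward functor $\fTnaivetw(X/\AA^1,1)\to \fM^{\tw\,\sst}_{0,0}(X/\AA^1,1)$. Given a twisted expansion $X'\to X$ over $S$ whose geometric fibers are standard $\br$-twisted expansions $X_b[\ell,\br]=(Y_1)_{r_0}\sqcup P_1\sqcup\cdots\sqcup P_\ell\sqcup (Y_2)_{r_\ell}$, I would observe that the composite $X_b[\ell,\br]\to X_b\hookrightarrow X$ is a genus-$0$ twisted prestable curve mapping into the fiber over $b$, whose two end components $(Y_1)_{r_0}$, $(Y_2)_{r_\ell}$ map isomorphically onto $Y_1$, $Y_2$ while the interior components $P_i$ contract to the point $D$. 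This map has image class equal to the fiber class, is semistable (each contracted component is rational with exactly two special points), and is an isomorphism away from $D$; hence it defines an object of $\fM^{\tw\,\sst}_{0,0}(X/\AA^1,1)$. As in Lemma~\ref{Lem:TP1}, this assignment commutes with base change and preserves the twisting $\br$, so it is a functor.

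Next I would construct the quasi-inverse, and this is where I expect the real work to lie. Given a semistable genus-$0$ degree-$1$ twisted map $C\to X_b$ that is an isomorphism away from $D$, I would argue by induction on the number of irreducible components that $C$ is a standard $\br$-twisted expansion, exactly as in the proof of Lemma~\ref{Lem:TP1}. Over the smooth fibers the map is an isomorphism $\PP^1\to \PP^1$ (the length-$0$ case); over the special fiber the ``isomorphism away from $D$'' condition together with the fact that the image class is the fiber class forces exactly one component to map isomorphically onto each $Y_i$ and every remaining component to contract to the point $D$. Semistability then forces the contracted locus to be a chain of rational components joining the two distinguished ends, and, since twisted curves are balanced by definition, the root structures at consecutive nodes are dual, matching the band-reversing identifications $\phi_i$ of Definition~\ref{Def:twisted-standard}. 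Pruning an end component yields a shorter semistable map to which the inductive hypothesis applies. Checking that the two functors are mutually quasi-inverse and base-preserving then yields the asserted equivalence.

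The main obstacle is this reverse direction, and specifically the bookkeeping forced by there now being \emph{two} distinguished end components (one over each $Y_i$) rather than the single marked end of the pair case. One must verify that the degree-$1$ and isomorphism-away-from-$D$ hypotheses genuinely pin down one isomorphic copy over each of $Y_1$ and $Y_2$, with no component meeting $X_b$ in any other way, and that the balancing at the twisted nodes reproduces precisely the gluing data of the standard $\br$-twisted expansion. Granting the equivalence, algebraicity is immediate: $\fM^\tw_{0,0}(X/\AA^1,1)$ is algebraic and locally of finite type over $k$ by \cite{AOV}, the semistable locus $\fM^{\tw\,\sst}_{0,0}(X/\AA^1,1)$ is an open substack, and the isomorphism transports these properties to $\fTnaivetw(X/\AA^1,1)$.
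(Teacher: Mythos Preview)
Your proposal is correct and follows essentially the same approach as the paper, whose proof is literally ``Repeating the arguments above'' (i.e., the proof of Lemma~\ref{Lem:TP1}); you have simply spelled out the adaptation to the two-ended degeneration situation. Two minor imprecisions worth cleaning up: the end components $(Y_i)_{r_i}$ do not map \emph{isomorphically} onto $Y_i$ when $r_i>1$ (they are the coarse-moduli/root maps, isomorphisms only away from $D$), and in the inductive step you should prune a \emph{contracted} component at an end of the contracted chain rather than an ``end component'' of $C$ itself.
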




\subsection{Algebraicity of $\cTtw$ and $\fTtw$} The following proposition implies, with Lemma \ref{Lem:TP1}, that $\cTtw$ is algebraic and locally of finite type over $\ZZ$.  First we define a morphism in one direction in complete generality.

\begin{definition} We define a morphism of stacks  $\Phi^{(Y,D)}: \cTtw \to \cTnaivetw(Y,D)$ by sending an expansion $(\sA',\sD')/S$ of $(\sA,\sD)$ to the fiber product  $(Y',D') := (Y\times_\sA \sA', D\times_\sD \sD')$. We similarly define a morphism of stacks $\Phi^{X/B}: \fT\times_\sA B \to \fTnaivetw(X/B,B_0)$ by sending an expansion $(\sA^2)'$ of $\sA^2\times_\sA B$ to the fibered product $X \times_{\sA^2\times_\sA B} (\sA^2)'$.
\end{definition}

\begin{proposition}\label{Prop:basic-pair-isom}
The morphism $\Phi^{(\PP^1,0)}: \cTtw \to \cTnaivetw(\PP^1,0)$ is an isomorphism. In particular $\cTtw$ is algebraic and locally of finite type over $\ZZ$.
\end{proposition}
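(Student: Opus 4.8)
The plan is to show that $\Phi^{(\PP^1,0)}$ is pullback along the canonical morphism $\pi\colon \PP^1 \to \sA$ attached to the pair $(\PP^1,0)$, and that this pullback is an equivalence because $\pi$ is faithfully flat and $\sA$ is the quotient $[\AA^1/\Gm]$. First I would record that $\pi$ is flat (as $0$ is a Cartier divisor, cf.\ the discussion preceding Definition~\ref{Def:pair-exp}) and surjective: the open point of $\sA$ is the image of $\PP^1 \setminus 0$ and the closed point $\sD = \cB\Gm$ is the image of $0$. By the twisted analogue of the flat base-change property of Lemma~\ref{Lem:exp-basic-comp} (root stacks and bubbles are preserved under flat pullback, since $\pi^\ast\cO_\sA(\sD) = \cO_{\PP^1}(0)$), $\pi^\ast$ carries a twisted expansion of $(\sA,\sD)$ to one of $(\PP^1,0)$, so $\Phi^{(\PP^1,0)} = \pi^\ast$ is well defined; faithfulness is then immediate from faithfully flat descent.

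For the quasi-inverse I would exploit that the standard chart $U_0 := \PP^1\setminus\{\infty\} = \AA^1 \subset \PP^1$, with its marking $0$, maps to $\sA$ by exactly the tautological atlas $\AA^1 \to [\AA^1/\Gm] = \sA$; in particular $\sA = [U_0/\Gm]$ and $\sD = [0/\Gm]$. Given a twisted expansion $(Y',D') \to (\PP^1,0)$ over $S$, all bubbling is concentrated over $0 \in U_0$ (the map $Y' \to \PP^1$ is an isomorphism away from $0$), so no information is lost on restricting to $U_0$. I would equip the restricted family $Y'|_{U_0} \to U_0 \times S$ with the canonical $\Gm$-action covering the scaling action on $U_0$: on the unmodified locus this is the scaling of $U_0$, while on each bubble it is the canonical action of $\Gm$ on $P = \PP(\cO_D(D)\oplus\cO)$ induced by the $\Gm$-action on $\cO_D(D)$ recalled after Definition~\ref{Def:standard-pair}. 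Forming the quotient $\sA' := [\,Y'|_{U_0}/\Gm\,]$, with divisor $\sD' := [\,D'|_{U_0}/\Gm\,]$, produces a twisted expansion of $(\sA,\sD)$, and this assignment $\Psi$ is the candidate inverse.

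It then remains to check $\Psi\circ\Phi^{(\PP^1,0)} \simeq \id$ and $\Phi^{(\PP^1,0)}\circ\Psi \simeq \id$. The first is the statement that pulling an expansion of $\sA$ back to $\PP^1$, restricting to $U_0$, and taking the $\Gm$-quotient returns the original, which holds because $U_0 \to \sA$ is the atlas presenting $\sA = [U_0/\Gm]$. For the second, I would verify that $\pi^\ast[\,Y'|_{U_0}/\Gm\,]$ reconstructs $Y'$: over $U_0$ this is tautological, and over $\PP^1\setminus 0$---which maps to the open point of $\sA$---the expansion is trivial, so the two descriptions glue over $U_0 \cap (\PP^1\setminus 0) = \Gm$ to give $Y'$ back.

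The main obstacle I expect is the construction just sketched of the canonical $\Gm$-equivariant structure on the family $Y'|_{U_0}$, i.e.\ producing a genuine group action (not merely a fiberwise one) and checking the cocycle condition, so that $[\,Y'|_{U_0}/\Gm\,]$ is legitimately an expansion of $(\sA,\sD)$ rather than only of $(\AA^1,0)$. The key point making this work is the \emph{canonicity} of the $\Gm$-action on the bubbles: because it is defined from the normal-bundle data $\cO_D(D)$, it is natural in the family and compatible with every morphism of expansions over $(\PP^1,0)$. This naturality simultaneously supplies the descent datum needed for essential surjectivity and forces any isomorphism of expansions over $(\PP^1,0)$ to be $\Gm$-equivariant, which upgrades faithfulness of $\pi^\ast$ to fullness. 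Finally, combining the resulting isomorphism $\cTtw \simeq \cTnaivetw(\PP^1,0)$ with Lemma~\ref{Lem:TP1} shows that $\cTtw$ is algebraic and locally of finite type over $\ZZ$.
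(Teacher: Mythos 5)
Your strategy coincides with the paper's: both construct the quasi-inverse by deleting $\infty$ and taking the quotient of the restricted family by the canonical balanced $\Gm$-action, so that $[(\PP^1\smallsetminus\{\infty\})/\Gm]=\sA$ converts a twisted expansion of $(\PP^1,0)$ into one of $(\sA,\sD)$. The difference lies in how the $\Gm$-action on the \emph{family} is obtained, and this is where your proposal has a genuine gap.

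You correctly single out the main obstacle---producing an action of the group scheme $\Gm$ on the total space $Y'|_{U_0}$ over $S$, not merely a fiberwise one---but the resolution you offer (``the action on the bubbles is defined from the normal-bundle data $\cO_D(D)$, hence natural in the family'') is an assertion, not an argument, and it fails as stated: the irreducible components of the fibers do not themselves form families over $S$. At a point of $S$ where a node smooths, two components merge into one, so the locus over which a given bubble exists is closed rather than open, and an action cannot be defined bubble-by-bubble and glued. Canonicity of the fiberwise action gives \emph{uniqueness} of an extension, never \emph{existence}. This is exactly why the paper devotes Appendix~\ref{sec:Gm-action} to the problem: Proposition~\ref{prop:Gm-action} extends the balanced action to an arbitrary family by a three-step argument---(i) infinitesimally, equivariant deformations coincide with all deformations because the balanced action acts trivially on the node-smoothing space $T_xC_1\tensor T_xC_2$; (ii) the resulting formal action over a complete noetherian local base is algebraized using \cite[Th\'eor\`eme~5.4.1]{ega-3-1}; (iii) Artin approximation descends it to general bases---and Proposition~\ref{prop:equivariant} then shows that every morphism of $2$-marked twisted rational curves is automatically equivariant, which is what makes your quotient functorial in $(Y',D')$ and, in your language, upgrades faithfulness of $\pi^\ast$ to fullness. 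The paper's proof of the proposition consists of marking $Y'$ at $(D',f^{-1}(\infty))$ and citing these two results; your proof becomes complete only once you prove (or cite) them, since without them the quotient $[\,Y'|_{U_0}/\Gm\,]$ is not even defined as a stack over $S$.
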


\begin{proof}
We provide a quasi-inverse $\Psi^{(\PP^1,0)}: \cTnaivetw(\PP^1,0) \to \cTtw$ as follows. Let $f:(Y',D') \to (\PP^1,0)$ be a twisted expansion over a scheme $S$. Marking $\PP^1$ at $(0, \infty)$ and $Y'$ at $(D',f^{-1}(\infty))$ we can view $f$ as a morphism of~$2$-marked rational curves. By Proposition \ref{prop:equivariant} the morphism $f$ is  $\GG_m$-equivariant with respect to the balanced $\GG_m$ action. Write 
$$\sY' := \bigl[(Y\smallsetminus\{\infty\}) / \GG_m\bigr];\quad \sD': = [D'   / \GG_m].$$
Since $[(\PP^1\smallsetminus\{\infty\}) / \GG_m] = \sA$ we obtain an expansion $(\sY',\sD') \to (\sA, \sD)$ over $S$. The procedure is clearly compatible with pullbacks, giving a morphism $\Psi^{(\PP^1,0)}: \cTnaivetw(\PP^1,0) \to \cTtw$. Checking that the morphisms are quasi-inverses is standard. \Dan{add such check? should we use the lemma \ref{Lem:passtoopen} about opens?}
\end{proof}

Here is the analogue of Proposition \ref{Prop:basic-pair-isom} for twisted degenerations:

\begin{proposition}\label{Prop:basic-deg-isom}
 Consider the degeneration $X \to \AA^1$ of Definition \ref{Def:basicdeg}. The morphism $\Phi^{X/\AA^1}: \fTtw\times_\sA \AA^1 \to \fTnaivetw(X/\AA^1,0)$ is an isomorphism.

The stack $\fT$ is algebraic and locally of finite type over $\ZZ$.
\end{proposition}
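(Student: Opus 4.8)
The plan is to mimic the proof of Proposition~\ref{Prop:basic-pair-isom} almost verbatim, producing an explicit quasi-inverse to $\Phi^{X/\AA^1}$ and then deducing algebraicity by descent. The first step is to realize the relevant universal degeneration as a global quotient. Writing $\sZ := \sA^2 \times_\sA \AA^1$, the base-change formalism of Definition~\ref{Def:twisted-exp} together with Lemma~\ref{Lem:basechangeT} identifies $\fTtw \times_\sA \AA^1 \simeq \fTnaivetw(\sZ/\AA^1,0)$, so that $\Phi^{X/\AA^1}$ is nothing but base change of twisted expansions along the canonical map $X \to \sZ$ over $\AA^1$. A direct computation (reducing the structure group $\GG_m^2 \to \GG_m$ along the multiplication character) identifies $\sZ$ with $[\AA^2/\GG_m]$, where $\GG_m$ acts with anti-diagonal weights $\lambda\cdot(x_1,x_2) = (\lambda x_1, \lambda^{-1} x_2)$, the map to $\AA^1=B$ being $(x_1,x_2)\mapsto x_1x_2$; its fiber over $0$ is the universal glued fiber $\sA \sqcup_\BGm \sA$. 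Dually, inside the basic degeneration $X$ of Definition~\ref{Def:basicdeg} I would let $X^\circ$ be the complement of the two sections of $X \to \AA^1$ meeting the special fiber away from the node $D$. One checks that $X^\circ$ is isomorphic over $\AA^1$ to $\AA^2$ with coordinates $(x_1,x_2)$ and $t = x_1x_2$, that the fiberwise $\GG_m$-action is precisely the anti-diagonal action, and that $X^\circ \to \sZ$ is the universal $\GG_m$-torsor realizing $\sZ = [X^\circ/\GG_m]$.

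With this dictionary I would construct the quasi-inverse $\Psi^{X/\AA^1}: \fTnaivetw(X/\AA^1,0) \to \fTtw \times_\sA \AA^1$. Given a twisted expansion $g: X' \to X$ over a scheme $S$, restrict to the open locus $X'^\circ := X' \times_X X^\circ$. Marking the two non-special sections and arguing exactly as in Proposition~\ref{Prop:basic-pair-isom}, the restricted map $X'^\circ \to X^\circ = \AA^2$ is $\GG_m$-equivariant for the balanced action (Proposition~\ref{prop:equivariant}). Taking the stack quotient produces $[X'^\circ/\GG_m] \to [\AA^2/\GG_m] = \sZ$, a flat family over $S$ whose geometric fibers are the twisted expansions of $\sA \sqcup_\BGm \sA$ obtained from the chain in $X'$ by deleting its two end points and dividing by the balanced $\GG_m$; this is a twisted expansion of $\sZ$, hence an object of $\fTtw \times_\sA \AA^1$. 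The construction is manifestly compatible with pullback. That $\Phi^{X/\AA^1}$ and $\Psi^{X/\AA^1}$ are mutually quasi-inverse is then the standard check: since $X^\circ \to \sZ$ is a $\GG_m$-torsor, base change along it followed by the quotient by $\GG_m$ is canonically the identity, and symmetrically in the other direction.

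For algebraicity, Lemma~\ref{Lem:basicfTalg} shows that $\fTnaivetw(X/\AA^1,0)$ is algebraic and locally of finite type; running that argument over $\Spec \ZZ$ (the basic degeneration and the stack of twisted maps of Definition~\ref{Def:basicdeg} both make sense integrally, cf.\ \cite{AOV}) gives the same conclusion over $\ZZ$. By the isomorphism just established, $\fTtw \times_\sA \AA^1$ is therefore algebraic and locally of finite type over $\ZZ$. Now $\AA^1 \to \sA$ is the universal $\GG_m$-torsor, in particular representable, smooth, and surjective, so the projection $\fTtw \times_\sA \AA^1 \to \fTtw$ is representable, smooth, and surjective; as algebraicity (with the locally-finite-type hypothesis) is local for the smooth topology on the target, $\fTtw$ is algebraic and locally of finite type over $\ZZ$. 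Finally $\fT = \fTnaive(\sA^2/\sA,\sD)$ of Definition~\ref{Def:deg-exp} is the open substack of $\fTtw$ of untwisted expansions (those with trivial twisting), whence $\fT$ is algebraic and locally of finite type over $\ZZ$.

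I expect the main obstacle to be verifying that the quotient $[X'^\circ/\GG_m]$ genuinely lands in $\fTtw \times_\sA \AA^1$: one must confirm that the balanced-action equivariance of Proposition~\ref{prop:equivariant}, stated there for maps to the pair $(\PP^1,0)$, applies to maps to the two-component degeneration; that the stack quotient stays flat over $S$ with the twisting along the splitting divisors correctly preserved; and that every geometric fiber is again a standard twisted expansion of $\sZ$ in the sense of Definition~\ref{Def:twisted-exp}. By comparison, the identification $X^\circ \cong \AA^2$ and the final descent of algebraicity along $\AA^1 \to \sA$ should be routine, the latter only requiring that $\fTtw$ have representable diagonal, which is inherited from the smooth cover.
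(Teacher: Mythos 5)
Your proposal is correct and follows essentially the same route as the paper: delete the two distinguished sections $s_0,s_\infty$, use the $\GG_m$-equivariance of Proposition~\ref{prop:equivariant} to form the balanced quotient $\bigl[(X'\setminus(s_0\cup s_\infty))/\GG_m\bigr]$ as the quasi-inverse, then obtain algebraicity from Lemma~\ref{Lem:basicfTalg} and descend along the smooth surjection $\AA^1\to\sA$. Your explicit torsor picture $X^\circ\cong\AA^2\to\sZ=[\AA^2/\GG_m]$ (anti-diagonal action) is a welcome refinement that makes the "quasi-inverse" verification transparent, but it is the same argument as the paper's, not a different one.
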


\begin{proof}

 We again provide a quasi-inverse $\Psi^{X/\AA^1}: \fTnaivetw(X/\AA^1,0) \to \fTtw\times_\sA \AA^1$, as follows. The proper transform $s_0$ of $0\times \AA^1$ and $s_\infty$  of $\infty\times \AA^1$ are disjoint markings of $X \to \AA^1$. Thus $X \to \AA^1$ is a family of 2-pointed twisted semistable curves, and an object of $\fTnaivetw(X/\AA^1,0)$ gives a morphism of families of semistable curves $X' \to X$.  By Proposition~\ref{prop:equivariant}, this map is equivariant with respect to the canonical actions of $\Gm$ on $X'$ and on $X$.  Taking $\sX = \bigl[ ( X' \setminus(s_0\cup s_\infty) ) /\GG_m\bigr]$ gives an expansion of $\bigl[ ( X \setminus\{0,\infty\} ) /\GG_m \bigr] = \sA^2 \times_\sA \AA^1$. \Dan{details?}

By Lemma \ref{Lem:basicfTalg} the stack $\fTnaivetw(X/\AA^1,0)\simeq \fTtw\times_\sA \AA^1$ is algebraic and locally of finite type over $k$.  Since $\AA^1 \to \sA$ is smooth and surjective it follows that $\fTtw$ is algebraic as well \cite[Lemma C.5]{AOV}.

\end{proof}

With Propositions \ref{Prop:basic-pair-isom}  and \ref{Prop:basic-deg-isom} we have completed the proofs of Theorem \ref{Th:twisted-algebraic} and Part~(1) of Theorem \ref{Th:main-T}.

\subsection{Exotic isomorphisms}\Jonathan{this section should at least contain a statement of the isomorphism $\fM' \simeq \cTtw \times \BGm$.  Also there should be an explicit description of the isomorphism $\cT \simeq \fT$ since it is much simpler than the construction here:  just delete the distinguished divisor}

\subsubsection{The isomorphism between expanded pairs and expanded degenerations}

An expanded pair induces an expanded degeneration by deleting the distinguished divisor.  The only information forgotten this way is the order of twisting along the distinguished divisor:  a distinguished divisor with any desired order of twisting can always be glued onto one end of an expanded degeneration to make an expanded pair.  We make these processes precise in the following proposition.

\begin{proposition} \label{prop:TisT}
There is an isomorphism $\cTtw \simeq \fTtw \times \ZZ_{> 0}$.
\end{proposition}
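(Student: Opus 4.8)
The plan is to construct an explicit, mutually inverse pair of functors. In one direction, given a twisted expanded pair $(\sA',\sD')\to(\sA,\sD)$ over $S$, I would record the order $r$ of twisting along the distinguished divisor $\sD'$ (this is a locally constant function $S\to\ZZ_{>0}$, giving the $\ZZ_{>0}$-factor), and then \emph{delete} $\sD'$ to obtain a twisted expanded degeneration of $\sA^2/\sA$. Concretely, an object of $\cTtw$ has geometric fibers of the form $\sA_{r_0}\sqcup P_1\sqcup\cdots\sqcup P_\ell$; removing the last distinguished divisor $\cD_{\ell\,+}$ turns the $\PP^1$-bundle end into an $\AA^1$-bundle end, which is exactly the local shape of a standard twisted expansion of the degeneration $\sA^2\to\sA$ near one of its two branches. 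So the output is a flat family over $S$ with a map to $\sA^2$ whose geometric fibers are standard $\br$-twisted expansions; this defines a functor $\cTtw\times_{\text{(recording $r$)}}\to\fTtw\times\ZZ_{>0}$, compatible with pullback.

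In the reverse direction, given a twisted expanded degeneration $(\sA^2)'\to\sA^2$ over $S$ together with an integer $r\in\ZZ_{>0}$, I would \emph{glue on} a distinguished divisor of twisting order $r$ at one designated end. The degeneration $\sA^2\to\sA$ has two symmetric branches $\sD_1\times\sA$ and $\sD_2\times\sA$; fixing a choice of branch (say the first, which is canonical once we recall the universal degeneration is the multiplication map $\sA^2\to\sA$) identifies the end component as an $\AA^1$-bundle over a root stack, and I complete it projectively and impose the $r$-th root structure along the new divisor $\sD'$, recovering the $P$-type end $P_\ell=P(\sqrt[r_{\ell-1}]{\cD_-},\sqrt[r]{\cD_+})$ of a standard twisted expanded pair. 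This produces an object of $\cTtw$ over $S$, again compatibly with pullback.

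The two constructions are visibly quasi-inverse on geometric fibers: deleting then re-gluing a divisor of the recorded order $r$ returns the original fiber, and gluing then deleting returns the original degeneration together with the same $r$. The cleanest way to organize the whole argument, and to avoid hand-checking the gluing, is to route through the already-established comparisons: by Proposition~\ref{Prop:basic-pair-isom} we have $\cTtw\simeq\cTnaivetw(\PP^1,0)$, and by Lemma~\ref{Lem:basicfTalg} we have $\fTnaivetw(X/\AA^1,1)\simeq\fM^{\tw\,\sst}_{0,0}(X/\AA^1,1)$ with $\fTtw\times_\sA\AA^1\simeq\fTnaivetw(X/\AA^1,1)$ by Proposition~\ref{Prop:basic-deg-isom}. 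On the curve side the operation ``delete/glue the distinguished divisor'' becomes ``forget/add a marked point'': the evaluation-fiber stack $\cTmapstw=e^{-1}\{0\}$ of $1$-pointed degree-$1$ semistable maps to $\PP^1$ differs from the corresponding stack of unpointed semistable maps to the fibers of $X/\AA^1$ precisely by the data of one twisted marked point lying over $0$, and the discrete order of twisting at that point is exactly the $\ZZ_{>0}$-factor.

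The step I expect to be the main obstacle is verifying that the gluing construction is \emph{flat and algebraic over $S$} — i.e.\ that attaching a root-stack $\PP^1$-end of prescribed twisting $r$ to a flat family of twisted expanded degenerations yields a flat family of twisted expanded pairs, with the twisting locally constant and the balancing condition at the new node automatically satisfied. The balancing is what forces the normal-bundle identification (\ref{Eq:normal-isom}) and the band-reversing isomorphisms $\phi_i$ of Definition~\ref{Def:twisted-standard} to be consistent with the $r$-th root structure; checking that these data glue over an arbitrary base, rather than fiberwise, is where the real content lies. Phrasing everything through the stack-of-maps descriptions above defuses this, since there the operation is the manifestly flat one of adjoining a section as a (possibly twisted) marked point, and flatness and algebraicity are then inherited from $\fM^{\tw}_{0,1}(\PP^1,1)$.
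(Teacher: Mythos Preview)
Your intuition is right — the isomorphism is indeed ``delete the distinguished divisor and remember its twisting order'' — and the paper says exactly this before the proposition. But both implementations you sketch have gaps.

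For the direct approach: after deleting $\sD'$ from $(\sA', \sD') \to (\sA, \sD)$, you assert the result is a twisted expansion of $\sA^2/\sA$, but you never say where the map $\sA' \setminus \sD' \to \sA^2$ comes from, nor how the base $S$ acquires its structure map to $\sA$. The original data supplies only one map to $\sA$; the second coordinate must be constructed, and this is the actual content. Your gluing direction has the same problem in reverse.

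For the curve fallback: your claim that $\cTmapstw$ and $\fM^{\tw\,\sst}_{0,0}(X/\AA^1,1)$ differ only by the data of a marked point is not correct. The targets are different --- $\PP^1$ versus the blow-up $X$ --- so forgetting the marked point from a map $C \to \PP^1$ does not produce a map to a fiber of $X/\AA^1$. Moreover $\fM^{\tw\,\sst}_{0,0}(X/\AA^1,1) \simeq \fTtw \times_\sA \AA^1$, not $\fTtw$ itself, so there is an extra $\AA^1$ your comparison does not absorb.

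The paper resolves both issues by passing through an intermediate roof. Let $\cT'$ be the stack of twisted expanded pairs of length $\leq 1$, with universal object $\sZ$ the open substack of $\sA^3 \times \ZZ_{>0}$ (coordinates $(x,y,z,n)$) where $y$ and $z$ do not vanish simultaneously. The map $(x,y,z) \mapsto xy$ makes $\sZ/\sA$ a degeneration; the map $(x,y,z) \mapsto xz^n$ is the projection to the universal pair. One then shows that both legs of
\[
\fTtw \times \ZZ_{>0} \ \longleftarrow\ \fTtw_{\naive}(\sZ/\sA,\sD)\ \longrightarrow\ \cTtw
\]
are isomorphisms. The left leg is Lemma~\ref{Lem:passtoopen} applied to the open inclusion $\sA^2 \simeq \{z \neq 0\} \subset \sZ$: this is precisely your ``delete the divisor'' step, but now the map to $\sA^2$ is built in via the coordinates on $\sZ$. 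The right leg amounts to showing every twisted expanded pair factors through a length-$\leq 1$ expansion, which follows from a contraction using the linear system $\cO_C(0+1+\infty)$ in the Graber--Vakil model. The roof through $\sZ$ is exactly what supplies the missing second $\sA$-coordinate and makes flatness evident.
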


To construct the isomorphism, it is convenient first to study expanded pairs of length $1$.  Let $\cT'$ be the moduli space of such expansions; it is isomorphic to the substack $\TGV' \subset \TGV$ of twisted curves with at most one node.  

We can describe $\cT'$ and its universal object very explicitly.  The stack $\cT'$ itself is isomorphic to $\sA \times \ZZ_{>0}$ with the second factor keeping track of twisting order along the distinguished divisor.  The universal object $\sZ$ may be defined as the open substack of $\sA^3 \times \ZZ_{>0}$, with coordinates $(x,y,z,n)$, where $y$ and $z$ do not vanish simultaneously; the distinguished divisor is the vanishing locus of $z$.  The projection to the base $\cT' \simeq \sA$ sends $(x,y,z)$ to $xy$ and the projection to the universal pair $(\sA, \sD)$ sends $(x,y,z)$ to $xz^n$.  

There are maps
\begin{equation} \label{eqn:24} \xymatrix@C=0pt{
& \fTtw_{\naive}(\sZ/\sA, \sD) \ar[dr] \ar[dl] \\
\fTtw \times \ZZ_{>0} & & \cTtw .
} \end{equation}
To describe the map on the left, note that $\sZ$ can be viewed as a degeneration over $\sA$, the map $\sZ \rightarrow \sA^2$ coming from $(x,y,z) \mapsto (xz^n,y)$.  By deleting the distinguished divisor, an expansion of $\sZ/\sA$ becomes an expansion of $\sA^2 /\sA$; the map to $\ZZ_{>0}$ simply records the order of twisting along the distinguished divisor.  As for the map on the right, note that if $\sX$ is an expansion of the degeneration $\sZ/\sA$ then by composition we obtain a map $\sX \rightarrow \sZ \rightarrow \sA$.  Comparing the fibers, one sees that these are all expansions of the pair $(\sA,\sD)$.

We show that both maps in Diagram~\eqref{eqn:24} are isomorphisms with the following two lemmas.

\begin{lemma}\label{Lem:passtoopen}
\begin{enumerate}[leftmargin=0pt,itemindent=3em]
\item Let $(Y,D)$ be a pair and $U\subset Y$ open, containing $D$.  Then $\cTnaivetw(Y,D) \simeq \cTnaivetw(U,D)$.  
\item Let $X \to V$ be a degeneration and  $U\subset X$ open, containing $D$.  Then $\fTnaivetw(X/B,B_0) \simeq \fT(U/B_0)$.
\end{enumerate}
\end{lemma}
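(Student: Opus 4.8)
The plan is to exhibit mutually inverse restriction and extension functors, the point being that a (twisted) na\"ive expansion is canonically trivial away from $D$. First I would record this triviality statement. Given a na\"ive expansion $(Y',D')\to(Y,D)$ over $S$ as in Definition~\ref{Def:pair-naive}, the maps $Y'\to Y$ and $Y'\to S$ assemble into a proper morphism $Y'\to Y\times S$ (product over the ground base) whose restriction over the open $(Y\smallsetminus D)\times S$ is a fiberwise isomorphism: on each geometric fiber over $S$ it is the contraction $Y[\ell]\to Y$ of Definition~\ref{Def:standard-pair} restricted to the complement of $D[\ell]$, which maps isomorphically onto $Y\smallsetminus D$. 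Since source and target are flat and locally of finite presentation over $S$, the fiberwise-isomorphism criterion for proper morphisms upgrades this to a canonical isomorphism $Y'\times_Y(Y\smallsetminus D)\xrightarrow{\ \sim\ }(Y\smallsetminus D)\times S$. The same holds in the twisted case, because the stacky structure of $Y[\ell,\br]$ (Definition~\ref{Def:twisted-standard}) is concentrated along $D$ and the splitting divisors, all of which lie over $D$ and hence inside $U$.

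Next I would define the two functors. Restriction sends $(Y',D')$ to $(U',D'):=(Y'\times_Y U,\,D')$; since $D\subset U$ we have $D'\subset U'$, and fiberwise $U[\ell]$ is a standard expansion of $(U,D)$, so this is a na\"ive expansion of $(U,D)$, manifestly compatible with base change in $S$. Extension sends $(U',D')$ to the algebraic space (or stack) $Y':=U'\cup_{(U\smallsetminus D)\times S}(Y\smallsetminus D)\times S$ obtained by gluing along the common open $(U\smallsetminus D)\times S$, which is identified with an open of $U'$ via the triviality statement applied to $U'$ and with an open of $(Y\smallsetminus D)\times S$ via $U\smallsetminus D\subset Y\smallsetminus D$. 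I would then check that $Y'$ is a na\"ive expansion of $(Y,D)$: flatness and local finite presentation over $S$ are local and immediate; properness of $Y'\to Y\times S$ is checked on the open cover $\{U\times S,\ (Y\smallsetminus D)\times S\}$ of the target, over which $Y'$ restricts to $U'$ (proper over $U\times S$) and to the identity on $(Y\smallsetminus D)\times S$ respectively; and the geometric fiber over $s\in S$ is $U[\ell]\cup_{U\smallsetminus D}(Y\smallsetminus D)=Y[\ell]$, a standard expansion.

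Finally I would verify that the two functors are quasi-inverse. Extending and then restricting recovers $Y'\times_{Y\times S}(U\times S)=U'$; restricting and then extending recovers $U'\cup_{(U\smallsetminus D)\times S}(Y\smallsetminus D)\times S$, which the triviality statement identifies with the original $Y'$ via its canonical trivialization over $(Y\smallsetminus D)\times S$. Both functors commute with pullback in $S$, so they are morphisms of stacks, hence mutually inverse isomorphisms. Part~(2) is proved verbatim with $Y$ replaced by $X$, the ground base replaced by $B$, the products $(-)\times S$ replaced by $(-)\times_B S$, and $D$ taken to be the singular locus $Y_1\cap Y_2$ of $X/B$. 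I expect the main obstacle to be the triviality statement of the first step: passing from the evident fiberwise isomorphism to an isomorphism of families requires the fiberwise criterion for proper (here finite) morphisms between $S$-flat families, and in the twisted setting one must ensure that the gluing of stacks along the open $U\smallsetminus D$ is legitimate and reproduces the correct balanced root structure---which it does precisely because no stacky structure survives away from $D$, so the gluing reduces to one of representable opens.
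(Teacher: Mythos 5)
Your proposal is correct and takes essentially the same route as the paper's proof: restriction is given by taking the preimage $U'=f^{-1}U$, and extension by gluing $U'$ to $(Y\smallsetminus D)\times S$ along the canonical identification of the expansion with the trivial family away from $D$, with the two constructions checked to be quasi-inverse. The only difference is that you spell out justifications the paper leaves implicit, namely the fiberwise-isomorphism criterion yielding the canonical trivialization $U'\smallsetminus f_U^{-1}D\simeq (U\smallsetminus D)\times S$ and the verification that the glued family satisfies the axioms of a na\"ive expansion.
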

\begin{proof}
Given a na\"ive expansion $f:(Y',D') \to (Y,D)$ over $S$, we canonically obtain a na\"ive expansion $(U',D') \to (U,D)$ by taking $U' = f^{-1} U$. Given a na\"ive expansion   $f_U:(U',D') \to (U,D)$ over $S$ we have a canonical isomorphism $U' \smallsetminus f_U^{-1} D \simeq U\smallsetminus D$. We obtain a na\"ive expansion $(Y',D') \to (Y,D)$ by gluing $$Y' =U' \mathop\sqcup\limits_{U\smallsetminus D} (Y\smallsetminus D).$$ These provide functors inverse to each other. The construction for expanded degenerations is identical.  
\end{proof}

Applying Part~(2) of the lemma to the inclusion $\sA^2 \subset \sZ$ (as the locus where $z \not= 0$) implies that the map $\fTtw_{\naive}(\sZ/\sA,\sD) \rightarrow \fTtw \times \ZZ_{>0}$ is an isomorphism.

\begin{lemma}
The map $\fTtw_{\naive}(\sZ / \sA, \sD) \rightarrow \cTtw$ is an isomorphism.
\end{lemma}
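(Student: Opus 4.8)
The plan is to construct an explicit quasi-inverse $\Psi\colon \cTtw \to \fTtw_{\naive}(\sZ/\sA,\sD)$ to the right-hand map $R$ of~\eqref{eqn:24}. Observe first that $R$ is tautological on total spaces: it carries an expansion $\sX\to\sZ$ of the degeneration $\sZ/\sA$ over a base $S$ to the same family $\sX/S$, now equipped with the composite structure map $\sX\to\sZ\to(\sA,\sD)$ and regarded as an expanded pair. Accordingly $\Psi$ should also leave total space and base unchanged: given an expanded pair $(\sA',\sD')/S$, it must upgrade the structure map $\sA'\to(\sA,\sD)$ to a map $\sA'\to\sZ$ lifting it, in such a way that $\sA'/S$ becomes an expansion of the degeneration $\sZ/\sA$. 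It suffices to do this universally, producing a canonical morphism $\sA^{\exp\,\tw}\to\sZ$ over $\sA$ from the universal expanded pair and checking that $\sA^{\exp\,\tw}/\cTtw$, so decorated, is a naive expanded degeneration; this datum is the same as the desired functor $\Psi$.

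To build the lift I will use the explicit coordinates $\sZ\subset\sA^3\times\ZZ_{>0}$, $(x,y,z,n)$, with pair map $(x,y,z)\mapsto xz^n$ and distinguished divisor $\{z=0\}$. On the universal expanded pair the component carrying the distinguished divisor $\sD^{\exp\,\tw}$ — the terminal bubble of the chain — is canonically a copy of $P$ with its two root divisors and its twisting order $\fr$; a defining section of $\sD^{\exp\,\tw}$ supplies the coordinate $z$, the twisting order supplies $n=\fr$, the coordinate $x$ is read off from the structure map to $\sA$, and $y$ is a defining section of the opposite divisor of that terminal bubble, namely the node attaching it to the remainder of the chain. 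Contracting everything but the terminal bubble yields the morphism $\sA^{\exp\,\tw}\to\sZ$. Informally: peel off the last bubble of the chain, use it to exhibit $\sZ$ as its own length-one model, and view the remaining bubbles as inserted at the node of $\sZ/\sA$.

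The crux, and the step I expect to be the main obstacle, is verifying that $\sA^{\exp\,\tw}/\cTtw$ with this map to $\sZ$ is genuinely an expansion of the degeneration $\sZ/\sA$. Fibrewise this amounts to the assertion that a standard expanded pair of $(\sA,\sD)$ of length $m$ is the same as a standard expanded degeneration of the length-one degeneration fibre $\sZ_b=Y_1\sqcup_D Y_2$, under the recipe that inserts $m-1$ bubbles at the node between the body $Y_2\simeq\sA$ and the terminal bubble $Y_1\simeq P$. The geometric input that makes this work is that the node $D$ of $\sZ/\sA$ lies in the open substack $\sA^2=\{z\neq0\}$, disjoint from $\sD_\sZ=\{z=0\}$ (this is the same open used for the left-hand map via Lemma~\ref{Lem:passtoopen}); consequently bubbles inserted at $D$ land strictly between the body and the terminal bubble, and so become exactly the non-terminal bubbles of the pair, while the terminal bubble — carrying the distinguished divisor — is never disturbed.

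Granting the fibrewise identification, the two triangle identities are formal. The composite $R\circ\Psi$ is the identity because $\Psi$ alters neither the total space nor the base and merely relabels the structure map through $\sZ\to(\sA,\sD)$, which $R$ relabels back. The composite $\Psi\circ R$ is the identity because the lift $\sX\to\sZ$ reconstructed from the expanded pair $R(\sX)$ is determined by its terminal bubble, which is precisely the component of $\sX$ lying over the cap $Y_1\simeq P$ of $\sZ$; this recovers the original map $\sX\to\sZ$. Hence $R$ is an isomorphism.
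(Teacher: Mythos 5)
Your plan --- peel the terminal bubble off the chain, contract the intermediate bubbles, and read the result as a map to $\sZ$ --- is in substance the same construction the paper uses, so the issue is not the strategy but a missing proof at its only hard point. You write ``contracting everything but the terminal bubble yields the morphism $\sA^{\exp\,\tw}\to\sZ$,'' but the existence of that contraction \emph{in families}, compatibly with base change and across jumps in the number of components of the fibers, is precisely the content of the lemma; it cannot simply be asserted. Contractions of families of nodal curves require a device that produces them, and the paper's proof consists of exactly this: view the expanded pair in the Graber--Vakil picture as a three-marked twisted curve $C/S$ and contract using the relatively semi-ample linear system $\cO_C(0+1+\infty)$, which collapses exactly the unmarked intermediate components and leaves at most one node per fiber. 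Your fibrewise identification and the triangle identities are essentially fine, but without an argument for the family-level contraction your functor $\Psi$ is only defined on geometric points.

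The coordinate recipe also breaks down concretely at the coordinate $y$. You describe it as ``a defining section of the opposite divisor of that terminal bubble, namely the node attaching it to the remainder of the chain.'' The node locus is not a Cartier divisor in the total space of a family: locally the family is $\{uv=t\}$ with $t$ a function on the base, and the node is $\{u=v=0\}$, of codimension two, so it has no defining section. What is actually required is a section of a line bundle whose divisor is supported on the components lying on the body side of \emph{each} node, with multiplicities that grow along the chain: in the paper's chart $\sA^2[\ell]\subset\sA^{2\ell}$ one has $y=t_1\cdots t_\ell$, and correspondingly $x=s_1\cdots s_\ell$ vanishes on all bubbles, not only the terminal one (it is $xy$, not $x$ or $y$ separately, that is pulled back from the base). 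Producing such a $y$ canonically over an arbitrary base is equivalent to producing the contraction, or equivalently the map $S\to\sA$ recording the product of the node-smoothing parameters, so the gap of the first paragraph reappears here rather than being filled.
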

\begin{proof}
To prove the lemma, we must check that any expanded pair admits a map to an expansion of length $2$.  For this, it is easiest to use the Graber--Vakil perspective on expansions.  Viewing an expanded pair over $S$ as a family of twisted curves $C/S$ with untwisted markings $1$ and $\infty$ and a potentially twisted marking $0$, we use the linear system $\cO_C(0 + 1 + \infty)$ to produce a contraction onto a curve $C'$ that has a marking on every component.  Since $1$ and $\infty$ must lie on the same irreducible component in each fiber of $C'$, the fibers of $C'$ can each have at most one node.
\end{proof}

\subsubsection{Moduli of curves and rigidification}

We describe a second relationship between expansions and semistable curves, this time by \emph{rigidification} (see \cite[Proposition IV.2.3.18]{Giraud} and \cite[Appendix A]{AOVtame}).  

Denote by $\fM' \subset \fM_{0,2}^{\tw\,\sst}$ the open and closed substack of twisted semistable marked curves where the second marking is not twisted. Proposition \ref {prop:equivariant} says that the group scheme $\GG_m$ is a normal subgroup of the inertia stack of $\fM_{0,2}^{\tw\,\sst}$. We can consider the rigidification $\fM'/\cB\GG_m$ (sometimes denoted  $\fM'\!\thickslash \GG_m$) of  $\fM'$. \Dan{switch notation??}

\begin{proposition} \label{prop:pairsiso}
There is a canonical isomorphism $\fM' \xrightarrow{\sim} \cTtw \times \BGm$.
\end{proposition}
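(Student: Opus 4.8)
The plan is to realize $\fM'$ as a trivial $\GG_m$-gerbe over $\cTtw$ via rigidification. By Proposition~\ref{prop:equivariant} the balanced $\GG_m$ sits as a central subgroup of the inertia stack of $\fM'$, so the rigidification $\fM' \thickslash \GG_m$ is defined and the projection $q \colon \fM' \to \fM' \thickslash \GG_m$ is a gerbe banded by $\GG_m$. It therefore suffices to (a) identify $\fM' \thickslash \GG_m$ with $\cTtw$, and (b) show that $q$ admits a section, since a $\GG_m$-gerbe with a section is canonically isomorphic to the product of its base with $\BGm$.

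For (a), first I would construct a morphism $\Phi \colon \fM' \to \cTtw$ by imitating the quotient construction $\Psi^{(\PP^1,0)}$ of Proposition~\ref{Prop:basic-pair-isom}. Writing a curve in $\fM'$ over $S$ as $(C; \Sigma_0, \Sigma_\infty)$ with $\Sigma_0$ possibly twisted and $\Sigma_\infty$ untwisted, I would send it to the pair $\bigl([(C \smallsetminus \Sigma_\infty)/\GG_m],\, [\Sigma_0/\GG_m]\bigr)$, equipped with the map to $(\sA,\sD) = \bigl([\AA^1/\GG_m],\, [0/\GG_m]\bigr)$ induced by the $\GG_m$-equivariant inclusion $\cO(-\Sigma_0) \hookrightarrow \cO$ cutting out the invariant divisor $\Sigma_0$. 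The balanced $\GG_m$-structure from Proposition~\ref{prop:equivariant} makes the quotient well defined, and the fibers are standard twisted expansions of $(\sA,\sD)$ exactly as in Proposition~\ref{Prop:basic-pair-isom}, so $\Phi$ lands in $\cTtw$. Since $\Phi$ is manifestly invariant under the central $\GG_m$, it factors through $q$, yielding $\overline{\Phi} \colon \fM' \thickslash \GG_m \to \cTtw$.

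Then I would show $\overline{\Phi}$ is an isomorphism, and this is where I expect the main work to lie: one must check that $\Phi$ exhibits $\fM'$ as a $\GG_m$-gerbe over $\cTtw$, equivalently that the only datum forgotten in passing from $(C;\Sigma_0,\Sigma_\infty)$ to its quotient expansion is the banding $\GG_m$. The hard part will be matching this balanced $\GG_m$ with the abstract banding of $q$. I would handle it through the identifications already available, $\cTtw \simeq \cTnaivetw(\PP^1,0) \simeq \cTmapstw$: an object of $\cTmapstw$ is a degree-$1$ map $f \colon C \to \PP^1$ with the marking over $0$, and the pair $\bigl(f^{-1}(0), f^{-1}(\infty)\bigr)$ recovers the two markings of $\fM'$, the map $f$ itself being precisely the rigidifying datum that a $\GG_m$-torsor trivializes.

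For (b), the section is supplied directly by this last observation. Using $\cTtw \simeq \cTmapstw$, I would define $\Sigma \colon \cTtw \to \fM'$ by sending $f \colon C \to \PP^1$ to the $2$-pointed twisted curve $(C; f^{-1}(0), f^{-1}(\infty))$; here $f^{-1}(\infty)$ is untwisted because $\infty$ lies in the smooth untwisted locus, so this is indeed an object of $\fM'$. Proposition~\ref{Prop:basic-pair-isom} gives $\Phi \circ \Sigma \simeq \id$, so $\Sigma$ is a section of $q$. Combining (a) and (b), the gerbe $q$ is trivial and $\fM' \simeq (\fM' \thickslash \GG_m) \times \BGm \simeq \cTtw \times \BGm$, and tracing through the construction shows the isomorphism is canonical. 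The role of the hypothesis that the second marking be untwisted is precisely to make $\Sigma_\infty = f^{-1}(\infty)$ untwisted, keeping both the quotient and the section well defined throughout.
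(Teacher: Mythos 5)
Your overall architecture (rigidify along the balanced $\GG_m$, identify the rigidification with $\cTtw$, trivialize the gerbe with a section) is not unreasonable, but as written the proposal has a genuine error and a genuine gap.

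The error: your $\Phi$ does not land in $\cTtw$. You equip $\bigl([(C\smallsetminus\Sigma_\infty)/\GG_m],[\Sigma_0/\GG_m]\bigr)$ with the map to $(\sA,\sD)$ determined by $\cO(-\Sigma_0)\hookrightarrow\cO$. That is the canonical map of the \emph{pair} in the sense of Cadman's lemma, but it is not the structure map of an expansion. By Definitions~\ref{Def:standard-pair} and~\ref{Def:twisted-exp}, the structure morphism of a (twisted) expansion must contract every bubble component onto $\sD$ and restrict to an isomorphism on the main component. Your map does the opposite: on any geometric fiber with at least one bubble, the main component (which does not meet $\Sigma_0$) is sent into the open locus $\sA\smallsetminus\sD$, and the only points hitting $\sD$ are those of the marked divisor itself. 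So the fibers are not standard expansions over $(\sA,\sD)$. The correct structure map is the quotient of a degree-one map $C\smallsetminus\Sigma_\infty\to\AA^1$; but such a map exists only \'etale-locally on $S$ --- its global existence is obstructed precisely by the $\GG_m$-torsor $N_{\Sigma_\infty/C}\smallsetminus 0$ --- so one must either argue that the local quotient maps glue (the local choices differ by the $\GG_m$-action, which dies in the quotient), or, as the paper does, twist $C$ by the dual torsor first. This is exactly where the $\BGm$-factor enters; it cannot be produced from $\Sigma_0$.

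The gap: even with $\Phi$ corrected, the assertion that $\overline{\Phi}:\fM'\thickslash\GG_m\to\cTtw$ is an isomorphism --- equivalently that $\Phi$ is a gerbe banded by the balanced $\GG_m$ --- is the actual content of the proposition, and you defer it ("this is where I expect the main work to lie") with only a gesture toward $\cTmapstw$; note also that the first marking of $\Sigma(f)$ should be the marked gerbe of the one-pointed map, not $f^{-1}(0)$, which is the whole contracted chain. The section $\Sigma$ by itself proves nothing here: a morphism admitting a section need not be a gerbe, so without full faithfulness of $\overline{\Phi}$ (every isomorphism of quotient expansions lifts \'etale-locally to a $\GG_m$-equivariant isomorphism of marked curves, uniquely up to the $\GG_m$-action) you cannot conclude $\fM'\simeq\cTtw\times\BGm$. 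The paper's proof of Proposition~\ref{prop:pairsiso} avoids all of this by a direct construction: it records $Q=N_{E/C}\smallsetminus 0$ as the $\BGm$-coordinate, twists $C$ by $Q^\vee$ (using Proposition~\ref{prop:Gm-action}) so that $N_{E'/C'}$ is canonically trivialized, and observes that the trivialization yields a canonical third marking $1$, producing a three-pointed twisted curve, i.e.\ a point of $\cTtw$; the explicit inverse is $(Q,(C',0,1,\infty))\mapsto(Q\times^{\GG_m}C',0,\infty)$. The rigidification statement you propose to prove first is, in the paper, a \emph{corollary} of the proposition rather than an ingredient of its proof.
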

\begin{proof}
Let $(C, D, E)$ be an $S$-point of $\fM'$.  That is, $C$ is a family of genus zero curves over $S$ with a potentially twisted marking $D$ and an unitwisted marking $E$.  The complement of the zero section in the normal bundle $N_{E/C}$\Jonathan{normal bundle!} is a $\Gm$-torsor on $S$.  This gives a map $\fM' \rightarrow \cTtw$.  Denote this $\Gm$-torsor by $Q$.

Since $C$ has an action of $\Gm$ (Proposition~\ref{prop:Gm-action}), we can twist $C$ by the opposite torsor of $Q$ to produce a family of semistable curves $C' = C \mathop{\times}\limits^{\Gm} Q^\vee$, with sections $D'$ and $E'$, such that $N_{E' / C'}$ is trivial.  But $N_{E'/C'}$ is isomorphic to the complement of the node in the component of $C'$ containing $E'$.  A trivialization of $N_{E'/C'}$ yields a section of $C'$ confined to the component containing $E'$ and not meeting either $E'$ nor a node.  Calling this section $1$ and letting $\infty = E'$ and $0 = D'$, we get an $S$-point of $\cTtw$.  

The pair $(Q, (C', 0, 1, \infty))$ gives the map claimed by the proposition.  The inverse sends $(Q, (C', 0, 1, \infty))$ to $(Q \mathop{\times}\limits^{\Gm} C', 0, \infty)$.
\end{proof}

\begin{corollary}
There is a canonical isomorphism $\cTtw \simeq \fM' / \BGm$.  Restricting to untwisted objects gives $\cT \simeq \fM_{0,2}^{\sst}/\BGm$.
\end{corollary}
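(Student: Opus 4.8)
The plan is to deduce both isomorphisms formally from Proposition~\ref{prop:pairsiso} together with the standard behavior of rigidification. Recall that rigidifying a stack $\cX$ along a flat normal subgroup $\GG_m \subset I_\cX$ is characterized by a universal property (see \cite[Appendix A]{AOVtame}): a morphism out of $\cX/\BGm$ is the same as a morphism out of $\cX$ whose induced action on $\GG_m \subset I_\cX$ is trivial. The key formal input is that rigidification is compatible with products by a trivial factor: for any algebraic stack $\cY$ one has $(\cY \times \BGm)/\BGm \simeq \cY \times (\BGm \!\thickslash \GG_m) \simeq \cY$, where the $\GG_m$ being killed is the inertia of the $\BGm$-factor and $\BGm \!\thickslash \GG_m \simeq \Spec \ZZ$ since the projection $\cY \times \BGm \to \cY$ visibly trivializes that inertia and is universal with this property.

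The one point requiring genuine care is to verify that, under the isomorphism $\fM' \xrightarrow{\sim} \cTtw \times \BGm$ of Proposition~\ref{prop:pairsiso}, the normal subgroup $\GG_m \subset I_{\fM'}$ furnished by Proposition~\ref{prop:equivariant} is carried to the inertia of the $\BGm$-factor. I would check this by tracing through the construction of Proposition~\ref{prop:pairsiso}: the map $\fM' \to \BGm$ sends $(C,D,E)$ to the $\GG_m$-torsor $Q$ given by the complement of the zero section in $N_{E/C}$. The balanced $\GG_m$-action on $C$ (Proposition~\ref{prop:Gm-action}) fixes the section $E$ and scales the normal bundle $N_{E/C}$, hence acts on $Q$ exactly as the band $\GG_m$ of $\BGm$ scales torsors. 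Therefore the normal $\GG_m \subset I_{\fM'}$ matches the $\GG_m$ in the inertia of the second factor, as required.

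Granting this identification, the first isomorphism is immediate: rigidifying $\fM' \simeq \cTtw \times \BGm$ along $\GG_m$ yields $\cTtw \times (\BGm \!\thickslash \GG_m) \simeq \cTtw$, which is precisely the canonical isomorphism $\cTtw \simeq \fM'/\BGm$ claimed. For the untwisted statement I would restrict the entire argument to the open and closed substack of fully untwisted curves in $\fM'$, which is exactly $\fM_{0,2}^{\sst}$ (every untwisted curve trivially has its second marking untwisted, so it lies in $\fM'$). Under the isomorphism of Proposition~\ref{prop:pairsiso} this locus corresponds to $\cT \times \BGm$, where $\cT \subset \cTtw$ is the untwisted part; note that the $\BGm$-factor records the $\GG_m$-action and is insensitive to the stacky structure at the markings, so the untwisting condition constrains only the $\cTtw$-factor. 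Rigidifying as before then gives $\fM_{0,2}^{\sst}/\BGm \simeq \cT$. The main obstacle is the compatibility check of the second paragraph; once the balanced $\GG_m$-action is seen to correspond to the band of the $\BGm$-factor, the remainder is a formal consequence of the universal property of rigidification and its compatibility with products.
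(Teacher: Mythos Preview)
Your proof is correct and follows essentially the same approach the paper intends: the corollary is stated without proof immediately after Proposition~\ref{prop:pairsiso}, so it is meant to be a formal consequence of that isomorphism together with the tautology $(\cY\times\BGm)/\BGm\simeq\cY$. Your explicit verification that the balanced $\GG_m\subset I_{\fM'}$ is carried to the inertia of the $\BGm$-factor (via the scaling action on $N_{E/C}$) is the one point the paper leaves implicit, and it is a worthwhile addition.
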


We could now repeat the argument for $\fTtw$, but it is faster to make use of the isomorphism proved in Proposition~\ref{prop:TisT}.  Letting $\fM''$ denote the locus in $\fM_{0,2}^{\tw\,\sst}$ where \emph{both} markings are untwisted and restricting to the locus where the distinguished divisor is untwisted in Proposition~\ref{prop:pairsiso}, we obtain the following corollary.

\begin{corollary}
There is a canonical isomorphism $\fM'' \simeq \fTtw \times \BGm$.  Therefore $\fTtw \simeq \fM'' / \BGm$ and $\fT \simeq \fM_{0,2}^{\sst} / \BGm$.
\end{corollary}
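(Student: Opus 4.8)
The plan is to obtain the isomorphism by restricting the isomorphism of Proposition~\ref{prop:pairsiso} to a suitable locus and then feeding in the splitting of Proposition~\ref{prop:TisT}. Recall that an object of $\fM'$ is a genus-zero twisted semistable curve $(C,D,E)$ with $D$ a potentially twisted marking and $E$ an untwisted marking, and that the isomorphism $\fM' \xrightarrow{\sim} \cTtw \times \BGm$ of Proposition~\ref{prop:pairsiso} sends $D$ to the distinguished divisor (the marking $0$) of the associated twisted expanded pair. In particular the twisting order of $D$ is carried to the twisting order of the distinguished divisor of the object of $\cTtw$.

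First I would identify $\fM''$ inside $\fM'$. By definition $\fM''$ is the locus of $\fM_{0,2}^{\tw\,\sst}$ where both markings are untwisted; since $\fM'$ already imposes that the second marking is untwisted, $\fM''$ is exactly the open and closed substack of $\fM'$ on which the first marking $D$ is also untwisted. Transporting this condition across the isomorphism of Proposition~\ref{prop:pairsiso}, $\fM''$ corresponds to $\cTtw_0 \times \BGm$, where $\cTtw_0 \subset \cTtw$ denotes the open and closed locus on which the distinguished divisor is untwisted.

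Next I would apply Proposition~\ref{prop:TisT}. Under the isomorphism $\cTtw \simeq \fTtw \times \ZZ_{>0}$ the second factor records precisely the twisting order along the distinguished divisor, so $\cTtw_0$ is the component $\fTtw \times \{1\}$, canonically identified with $\fTtw$. Combining the two steps gives the canonical isomorphism $\fM'' \simeq \fTtw \times \BGm$. The rigidification statement $\fTtw \simeq \fM''/\BGm$ then follows formally, exactly as in the corollary for $\cTtw$: the $\BGm$ factor produced here is the one coming from the canonical $\Gm$ in the inertia (Proposition~\ref{prop:equivariant}), which is precisely the band rigidified in passing to $\fM''/\BGm$, so rigidification deletes the $\BGm$ factor. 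Finally, restricting the whole construction to untwisted curves identifies $\fM''$ with $\fM_{0,2}^{\sst}$ and $\fTtw$ with $\fT$ while leaving the $\BGm$ factor untouched, yielding $\fM_{0,2}^{\sst} \simeq \fT \times \BGm$ and hence $\fT \simeq \fM_{0,2}^{\sst}/\BGm$.

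The main obstacle is the bookkeeping of the twisting data across the two isomorphisms: one must check that the twisting order of the marking $D$ in $\fM'$, the twisting order of the distinguished divisor in $\cTtw$, and the $\ZZ_{>0}$-coordinate of Proposition~\ref{prop:TisT} all agree, and that the $\BGm$ rigidified in the corollary is the same $\Gm$-in-inertia that appears as the $\BGm$ factor. Both are compatibility verifications already implicit in the constructions of Propositions~\ref{prop:pairsiso} and~\ref{prop:TisT}, so the argument amounts to tracking the distinguished divisor's twisting through the existing equivalences rather than supplying any new geometric input.
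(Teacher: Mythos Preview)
Your proposal is correct and follows essentially the same route as the paper: restrict the isomorphism $\fM'\simeq\cTtw\times\BGm$ of Proposition~\ref{prop:pairsiso} to the locus where the distinguished divisor is untwisted, then use Proposition~\ref{prop:TisT} to identify that locus in $\cTtw$ with $\fTtw$. Your additional remarks on tracking the twisting order and the $\BGm$ factor through the two isomorphisms are exactly the bookkeeping the paper leaves implicit; one small caution is that your notation $\cTtw_0$ clashes with the paper's later use of $\cT^{\tw}_0 = 0\times_\sA\fT^{\tw}$, so a different symbol would be safer.
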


\section{Stacks of non-rigid expansions}\Jonathan{We should also mention the closed embedding of $\cTtw_\sim$ in $\cTtw$}
\setcounter{thm}{0}
\numberwithin{thm}{section}

In \cite{GV} Graber and Vakil showed that localization in relative Gromov--Witten theory necessitates one more stack of expansions. This time we fix only the ``divisor" $D$ and a line bundle $N$, standing in for the normal bundle  $N_{D/ Y}$. We allow twisting: 

\begin{definition}
A standard non-rigid twisted expansion of $(D,N)$ is of the form
$$ P_0 \mathop{\sqcup}\limits_{\cD_+\mathop{=}\limits_{\phi_1}\cD_-}  \cdots   \mathop{\sqcup}\limits_{\cD_+\mathop{=}\limits_{\phi_\ell}\cD_-} P_\ell
$$
where $P = \PP_D(N \oplus \cO_D)$ is the projective completion of $N$, and the notation $P_i$, $\cD_\pm$ and $\phi_i$ is as in Definition \ref{Def:twisted-standard}.

We define $\cT^\tw_\sim$ to be the stack whose objects over $S$ are flat families of  twisted non-rigid expansions of $(\BGm,\cL)$, where $\cL$ is the universal line bundle\Dan{or is it the dual??}.

For an arbitrary $(D,N)$, we define an expansion over $S$ to be a commutative diagram with cartesian squares
$$ \xymatrix{ Z \ar[r]\ar[d] & \sP'\ar[d] \\
 D \ar[r]\ar[rd] & \BGm \times S\ar[d] \\
 & S
}$$ where the right column is a non-rigid twisted  expansion of $(\BGm,\cL)$ and in the middle arrow the factor $D \to \BGm$ comes from the line bundle $N$. 
\end{definition}

Again $\cT^\tw_\sim$ is identified as the stack of non-rigid twisted expansions of $(D,N)$, by the universal property of fibered products.

Graber and Vakil identified the stack of flat families of non-rigid expansions  of $(\Spec k,\cO)$ as
$\fM_{0,2}^{\sst}$. As in the proof of Lemma \ref{Lem:TP1} we have 

\begin{proposition}
There is a canonical isomorphism $\cTtw_\sim\simeq \fM_{0,2}^{\tw\, \sst}$. In particular it is an algebraic stack, locally of finite type over $\ZZ$, and $\cT_\sim\simeq \fM_{0,2}^{\sst}$ is an open substack.
\end{proposition}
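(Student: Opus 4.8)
The plan is to produce a base-preserving equivalence $\cTtw_\sim \simeq \fM_{0,2}^{\tw\,\sst}$ in the style of Lemma~\ref{Lem:TP1}, with the extra map to $\BGm$ encoded by the canonical $\Gm$-torsor rather than discarded. The geometric input is the observation that a $2$-pointed genus $0$ twisted semistable curve is forced to be a chain: genus $0$ makes the dual graph a tree, and semistability forbids any rational component with fewer than two special points, so every leaf of the tree must carry one of the two markings. With only two markings the tree is therefore a segment, every component has exactly two special points, and the markings sit on the two extreme components --- precisely the shape of a standard non-rigid expansion.

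First I would build the functor $\fM_{0,2}^{\tw\,\sst} \to \cTtw_\sim$. Given a family $(C,\Sigma_-,\Sigma_+)\to S$, Proposition~\ref{prop:Gm-action} equips $C$ with a canonical $\Gm$-action, balanced at the nodes. Form $\sP' := [C/\Gm]$ with its structure map $\sP' \to \BGm$ induced by $C \to \Spec\ZZ$. Using the chain description, each component is $[\PP^1/\Gm]$ with the weight-$(1,0)$ action, which is canonically $\PP_{\BGm}(\cL\oplus\cO)=P$; the balancing at a (twisted) node is exactly the band-reversing gluing $\phi_i$, and the two (possibly twisted) markings become the extreme root divisors $\cD_-,\cD_+$. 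Thus $\sP'\to \BGm\times S$ is a standard $\br$-twisted non-rigid expansion of $(\BGm,\cL)$, i.e.\ an object of $\cTtw_\sim(S)$, and the construction is plainly compatible with base change.

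In the other direction I would send $\sP'\to\BGm\times S$ in $\cTtw_\sim(S)$ to the pullback $C := \sP'\times_{\BGm}\Spec\ZZ$ along the atlas $\Spec\ZZ\to\BGm$. Since a map to $\BGm$ is the same as a $\Gm$-torsor, $C\to\sP'$ is that torsor and $[C/\Gm]=\sP'$; fiberwise $C$ is the $\Gm$-torsor over the chain $[C_s/\Gm]$, hence the chain $C_s$ itself, so $C\to S$ is a flat family of $2$-pointed genus $0$ twisted semistable curves, with markings the pullbacks of $\cD_\pm$ and twisted nodes inherited from the root structures of the $P_i$. This defines a functor $\cTtw_\sim\to\fM_{0,2}^{\tw\,\sst}$, again compatible with base change. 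The two functors are quasi-inverse: one composite is the quotient identity $[C/\Gm]\times_{\BGm}\Spec\ZZ = C$, the other recovers $\sP'=[C/\Gm]$ from its torsor, and both preserve the markings; crucially no information is lost in passing $\sP'\mapsto C$ because the $\Gm$-action, and hence the map to $\BGm$, is canonical (Proposition~\ref{prop:equivariant}).

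The ``in particular'' is then immediate: $\fM_{0,2}^{\tw\,\sst}$ is an open substack of the algebraic stack of twisted prestable curves, hence algebraic and locally of finite type over $\ZZ$ (\cite{AOV}), and $\cT_\sim$, the locus where both markings and all nodes are untwisted, is carried to the open substack $\fM_{0,2}^{\sst}$. I expect the step requiring the most care to be the verification that an \emph{arbitrary} family of non-rigid expansions pulls back to a \emph{flat} family of twisted semistable curves, and dually that the canonical balanced $\Gm$-action reproduces both the weight-$(1,0)$ $\PP^1$-bundle $P$ and the band-reversing gluings; however this is the same equivariant structure already established in the rigid case in Propositions~\ref{prop:equivariant} and~\ref{prop:pairsiso}, now applied \emph{without} rigidifying by $\BGm$.
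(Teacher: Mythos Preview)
Your proposal is correct and follows the paper's own (extremely terse) approach. The paper gives no proof beyond the phrase ``As in the proof of Lemma~\ref{Lem:TP1}'' together with the Graber--Vakil observation that na\"\i ve non-rigid expansions of $(\Spec k,\cO)$ are $2$-pointed semistable genus~$0$ curves. What you have written is exactly the unpacking of that sentence: the passage between flat families over $\BGm$ (objects of $\cTtw_\sim$) and flat families of curves over $\Spec\ZZ$ (objects of $\fM_{0,2}^{\tw,\sst}$) via the $\Gm$-torsor/quotient, using the canonical balanced action of Proposition~\ref{prop:Gm-action} and its uniqueness from Proposition~\ref{prop:equivariant}. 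This is the non-rigid analogue of Proposition~\ref{Prop:basic-pair-isom}, and your identification of each component $[\PP^1/\Gm]$ with $\PP_{\BGm}(\cL\oplus\cO)=P$, together with the band-reversing gluings, is the correct verification that fibers on each side match.

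One small clarification worth making explicit: the structure map $[C/\Gm]\to\BGm$ you describe is the quotient of the structural map $C\to\Spec\ZZ$ by the trivial action on the target, i.e.\ $[C/\Gm]\to[\Spec\ZZ/\Gm]=\BGm$; it is worth remarking that on each component this coincides with the bundle projection $P=\PP_{\BGm}(\cL\oplus\cO)\to\BGm$, so that the gluing of components (which is over $D=\BGm$) is automatically compatible with this map. With that said, your argument is complete and is the paper's intended one.
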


\section{Diagonals and automorphisms: na\"ive expansions}
\numberwithin{thm}{subsection}
\setcounter{thm}{0}
\label{Sec:diagonals}
In preparation for proving the remainder of Theorem \ref{Th:main-T} and discussing the shortcomings of na\"\i ve expansions, we need some information on the diagonals of stacks of na\"\i ve expansions and automorphisms of na\"\i ve expansions. Since we will not need twisted versions of these we only discuss the untwisted situation.

\subsection{The diagonals of the stacks of na\"ive expansions}
\begin{proposition} \label{prop:diagonal}
  \begin{enumerate}[leftmargin=0pt,itemindent=3em]
  \item Let $(X,D)$ be a smooth pair with $X$ proper.  The diagonal of $\cTnaivetw(X,D)$ is representable by algebraic spaces and is locally of finite presentation.
  \item Let $X \rightarrow B$ be a proper degeneration.  The diagonal of $\fTnaive(X/B,B_0)$ is representable by algebraic spaces and is locally of finite presentation.
  \end{enumerate}
\end{proposition}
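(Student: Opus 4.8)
The plan is to reduce the statement to representability of the relevant $\uIsom$ functors and then invoke the classical theorem of Artin on isomorphisms between proper flat families. Recall that the diagonal of a stack $\cX$ is representable by algebraic spaces and locally of finite presentation exactly when, for every scheme $S$ and every pair of objects $\xi,\eta\in\cX(S)$, the functor $\uIsom_S(\xi,\eta)$ on $S$-schemes is an algebraic space locally of finite presentation over $S$. So in each part I would fix two na\"ive expansions over a scheme $S$ and study their sheaf of isomorphisms.

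For part (1), let $\xi=(Y',D')$ and $\eta=(Y'',D'')$ be na\"ive expansions of $(X,D)$ over $S$, with contraction morphisms $f'\colon Y'\to X$ and $f''\colon Y''\to X$. The crucial preliminary point is that $Y'$ and $Y''$ are proper, flat and locally of finite presentation over $S$: the contractions $Y'\to X$ and $Y''\to X$ are projective (as in the remark after Definition~\ref{Def:standard-pair}), hence proper, and since $X$ is proper over the base field, the cancellation property for proper morphisms applied to $Y'\to X\times S\to S$ (and likewise for $Y''$) yields properness over $S$. Artin's representability theorem for $\uIsom$ of proper, flat, finitely presented algebraic spaces---and, in the twisted case, of proper flat Deligne--Mumford stacks---then shows that $\uIsom_S(Y',Y'')$, the space of isomorphisms of the bare families over $S$, is an algebraic space locally of finite presentation over $S$.

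It remains to cut out those isomorphisms $\phi\colon Y'\to Y''$ that respect the structure of a na\"ive expansion, i.e.\ satisfy $f''\circ\phi=f'$ and $\phi(D')=D''$. Writing $I:=\uIsom_S(Y',Y'')$ and $\phi$ for the universal isomorphism over $I$, the two morphisms $f'_I$ and $f''_I\circ\phi$ from $Y'_I$ to $X$ have a closed equalizer $W\subset Y'_I$ because $X$ is separated; since $Y'_I\to I$ is proper and flat, hence both a closed and an open map, the locus in $I$ over which $W$ equals all of $Y'_I$ is closed, and this is precisely the sublocus where $\phi$ lies over $X$. The divisor condition $\phi(D')=D''$ is closed for the same reason, being the locus where the two $I$-flat closed subspaces $D'_I$ and $\phi^{-1}(D''_I)$ of $Y'_I$ agree. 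Intersecting these closed subspaces of $I$ represents $\uIsom_S(\xi,\eta)$, which is therefore an algebraic space locally of finite presentation over $S$, as required.

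Part (2) runs in complete parallel. For na\"ive expansions $X'$ and $X''$ of the proper degeneration $X\to B$ over an $S\to B$, the contractions $X'\to X$ and $X''\to X$ are again projective, so $X'$ and $X''$ are proper flat of finite presentation over $S$ by the same cancellation argument (now carried out over $B$), Artin gives that $\uIsom_S(X',X'')$ is an algebraic space locally of finite presentation, and the compatibility with the morphisms to $X$ over $B$ cuts out a closed subspace exactly as above. The step I expect to require the most care is the very first one---confirming that the expansion families are genuinely proper over $S$, so that Artin's theorem is applicable (and, in the twisted setting, that $\uIsom$ of proper flat Deligne--Mumford stacks is still an algebraic space). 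Once properness is secured, the remaining closed-condition bookkeeping is a routine assembly of standard representability facts and should present no serious difficulty.
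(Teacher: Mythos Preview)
Your proposal is correct and follows essentially the same strategy as the paper: reduce the diagonal to an $\uIsom$ functor between the two proper flat families over $S$, invoke Artin's representability theorem for $\uIsom$, and then impose compatibility with the contraction to $X$. The only cosmetic difference is in that last step: the paper packages the ``over $X$'' condition as a fiber product
\[
\uIsom_{T^2}(E^{(1)},E^{(2)}) \ \fp_{\uHom_{T^2}(E^{(1)},\,Z\times_{T^2} Z)} \ \uHom_{T^2}(E^{(1)},Z),
\]
all of whose terms are algebraic spaces by Artin, whereas you cut it out as the locus where an equalizer $W\subset Y'_I$ exhausts the fibers. Your argument gives the correct closed \emph{set}; to be sure it represents the subfunctor (i.e., carries the right scheme structure) one either rewrites it as the fiber product above or invokes the standard lemma that the locus where two $S$-maps from a proper flat family to a separated target agree is a closed subscheme. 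You are also slightly more explicit than the paper in spelling out properness over $S$ and in treating the divisor condition separately, both of which are handled correctly.
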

\begin{proof}
  Let $T$ denote $\cTnaive(X,D)$ in the first case of the proposition, and $\fTnaive(X/B,B_0)$ in the second case.  In the first case, let $B$ be a point.  Write $E$ for the universal expansion over $T$.  Let $Y = X \fp_B T$, let $T^2 = T \fp_B T$, and let $Z = X \fp_B T^2$.  Note that there is a proper projection $E \rightarrow Y$.  Let $E^{(i)}$ be the pullback of $E$ to $T^2$ via the $i$-th projection $T^2 \rightarrow T$.  

  To see that the diagonal of $T$ is representable, we must see that the functor
  \begin{equation*}
     \uIsom_Z(E^{(1)}, E^{(2)}) : S \mapsto \Isom_{Z_S}(E^{(1)}_S, E^{(2)}_S)
  \end{equation*}
  is representable by an algebraic space.  Each $E^{(i)}$ is proper over $Z$ and $Z$ is proper over $T^2$, so $E^{(i)}$ is proper, and also flat, over $T^2$. Therefore there is a relative algebraic space $\uIsom_{T^2}(E^{(1)}, E^{(2)})$ whose $S$-points are isomorphisms between $E^{(1)}_S$ and $E^{(2)}_S$.  We can identify\Dan{Please check  -  I did not undersand the formula that was here before} 
  \begin{gather*}
  \uIsom_Z(E^{(1)}, E^{(2)}) = \uIsom_{T^2}(E^{(1)}, E^{(2)}) \fp\limits_{\uHom_{T^2}(E^{(1)},Z\times_{T^2} Z)}\uHom_{T^2}(E^{(1)},Z). 
  \end{gather*}
  In the fiber product the map on the right is induced by the diagonal of $Z$ and the map on the left by the projection $E^{(1)} \to Z$ and the composition $E^{(1)} \to E^{(2)}\to Z$.
  Everything appearing above is an algebraic space over $T^2$ because $E^{(i)}$ are proper and flat over $T^2$ (cf.\ \cite[Theorem~6.1]{Artin-F1}).

  It follows immediately from the fact that $E^{(1)}$ and $E^{(2)}$ are locally of finite presentation over $Z$ that $\uIsom_Z(E^{(1)}, E^{(2)})$ is locally of finite presentation over $T$ (see \cite[Proposition~(8.13.1)]{ega-4-3}).
\end{proof}

\subsection{Automorphisms of na\"\i ve expansions}

\begin{lemma} \label{lem:accordion-auts} \Jonathan{isn't there a reference to Li somewhere?  Maybe \cite[Corollary~1.4]{Li1}}
\begin{enumerate}[leftmargin=0pt,itemindent=3em]
 \item  An isomorphism $(Y'_1, D'_1) \rightarrow (Y'_2, D'_2)$ of expansions of $(Y,D)$ over an algebraically closed field  is given by an order preserving bijection between the sets of irreducible components and an isomorphism on each component isomorphic to $\PP(N_{D/Y} \oplus \cO_D)$ preserving the fixed points of the $\GG_m$ action and structure map. The same holds for the case of expanded degenerations.\Dan{rephrase degeneration case?}
\item   If each irreducible component of an expansion of $(Y,D)$, except $Y$, is identified with $\PP(N_{D/Y} \oplus \cO_D)$ then  automorphisms of that component preserving fixed loci and structure map are in bijection with $\Gm(D)$. The same holds for the case of expanded degenerations.
\item  The automorphism group of an expansion of length~$\ell$ is $\Gm(D)^\ell$.
\end{enumerate}
\end{lemma}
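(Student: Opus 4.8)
The plan is to prove the three parts in sequence, with part~(3) obtained by assembling parts~(1) and~(2) together with a single compatibility check; I expect that check to be the only genuine subtlety.

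For part~(1), the key is that the contraction to $(Y,D)$ canonically singles out the anchor component. In the expansion $Y[\ell]=Y\sqcup_D P\sqcup_D\cdots\sqcup_D P$ of Definition~\ref{Def:standard-pair}, the structure map restricts to an isomorphism on $Y$ but collapses every copy of $P$ onto $D$; hence $Y$ is the unique irreducible component whose image is not contained in $D$. Any isomorphism $(Y_1',D_1')\to(Y_2',D_2')$ over $(Y,D)$ must therefore carry the anchor to the anchor and restrict to the identity there. Since the dual graph of an expansion is a path with the anchor at one end, fixing the anchor rigidifies the orientation, so the induced bijection of components is order preserving. On each copy of $P$ the isomorphism is then an isomorphism over $D$, and since neighbouring components are glued precisely along the distinguished divisors $D_\pm$ and the bijection preserves the order, this restriction must send $D_-\mapsto D_-$ and $D_+\mapsto D_+$; that is, it preserves the $\Gm$-fixed loci and the structure map. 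The degeneration case runs identically, with the two anchors $Y_1,Y_2$ occupying the two ends of the path and distinguished from one another by their images in $X$.

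For part~(2), I would fix one copy $P=\PP(N_{D/Y}\oplus\cO_D)\to D$ and use the canonical identifications of Definition~\ref{Def:standard-pair}: $P\smallsetminus D_+$ is the total space of $\cO_D(-D)$ with zero section $D_-$. An automorphism of $P$ over $D$ fixing both sections restricts on each fibre to a M\"obius transformation of $\PP^1$ fixing $0$ and $\infty$, hence to a scaling; these fibrewise scalings assemble into a global automorphism scaling the line bundle relative to the trivial summand, i.e.\ an element of $\Gm(D)=H^0(D,\cO_D^\times)$, and conversely every such element yields such an automorphism. This is the claimed bijection.

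Part~(3) then follows by assembly: by part~(1) an automorphism of $Y[\ell]$ is the identity on the anchor, and by part~(2) it is recorded on the $i$-th copy of $P$ by some $\lambda_i\in\Gm(D)$. The hard part, such as it is, will be to see that the gluing imposes no relation among the $\lambda_i$; a priori one might fear that compatibility along the shared divisors forces $\lambda_i=\lambda_{i+1}$ and collapses the product. This is dispelled by the observation that the scaling action fixes $D_-$ and $D_+$ \emph{pointwise} — they are the zero section and the section at infinity of the fibrewise $\Gm$-action — so each $\lambda_i$ restricts to the identity on $D_+(P_i)=D_-(P_{i+1})$ and automatically matches the neighbouring factor. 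Hence the $\lambda_i$ are unconstrained and $\Aut(Y[\ell])\cong\Gm(D)^\ell$ as a group. The identical argument, now with $\ell$ interior copies of $P$ between the anchors $Y_1$ and $Y_2$, gives the statement for expanded degenerations.
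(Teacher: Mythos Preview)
Your proposal is correct and follows essentially the same approach as the paper's (very terse) proof: identify the anchor component(s) via the structure map, deduce the order-preserving bijection on components, then compute the automorphism group of each $P$ over $D$ fixing $D_\pm$. The paper compresses parts~(1)--(3) into two sentences and does not spell out the gluing compatibility you check in part~(3); your more careful treatment is fine.

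One small difference worth noting: for part~(2) the paper bypasses the fibrewise M\"obius calculation by observing that an automorphism of $P=\PP(N_{D/Y}\oplus\cO_D)$ over $D$ fixing $D_\pm$ is the same as an automorphism of the $\Gm$-torsor $N_{D/Y}\smallsetminus 0$, which is immediately $\Gm(D)$. This is slightly slicker than assembling fibrewise scalings into a global section of $\cO_D^\times$, but the content is the same.
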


\begin{proof}
Any isomorphism of pairs induces a bijection between the sets of irreducible components.  In this case, the order must be preserved because the divisors $D_1$ and $D_2$ must be respected, as must the components isomorphic to $Y$. 
The automorphism group of $\PP(N_{D/Y} \oplus \cO_D)$ over $D$ is the same as the automorphism group of the underlying $\Gm$-torsor $N_{D/Y} \smallsetminus 0$, which is just $\Gm(D)$.  
\end{proof}

\subsection{Comparison of geometric objects}

\begin{definition}\label{Def:Phi}
\begin{enumerate}[leftmargin=0pt,itemindent=3em] \item
Let $(X,D)$ be a smooth pair.  Define  $\Phi = \Phi^{(X,D)} : \cT \rightarrow \cTnaive(X,D)$ by sending an expansion $(\sA',\sD')/S$ of $(\sA,\sD)$ to $(X',D') := (X\times_\sA \sA', D\times_\sD \sD')$. 
\item
Let $(X/B,B_0)$ be a degeneration.  Define  $\Phi = \Phi^{X/B} : \fT\times_\sA B \rightarrow \cTnaive(X/B,B_0)$ by sending an expansion $(\sA^2)'/S$ of 
$(\sA^2 \fp_{\sA} B) / B$ to $X' := X\times_{\sA^2\times_\sA B} (\sA^2)'$. 
\end{enumerate}
\end{definition}

\begin{proposition}\label{Prop:geometric-points} Assume $D$ is smooth, proper and connected. Then the functors $\Phi^{(Y,D)}$ and $\Phi^{X/B}$ induce equivalences on geometric points.
\end{proposition}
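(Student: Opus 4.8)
The plan is to fix an algebraically closed field $k$ and to check that each functor is essentially surjective and fully faithful on the groupoid of $k$-points. On the source side, a $k$-point of $\cT=\cTnaive(\sA,\sD)$ is (up to isomorphism) a standard expansion $\sA[\ell]$ for a unique $\ell\geq 0$, while a $k$-point of $\fT\times_\sA B$ is a standard expansion of the universal degenerate fiber of $\sA^2\to\sA$ over the image point of $\sA$; on the target side, a $k$-point of $\cTnaive(X,D)$ is a standard expansion $X[\ell]$, and a $k$-point of $\fTnaive(X/B,B_0)$ is some $X_b[\ell]$. In the degeneration case, points not lying over $B_0$ give only the trivial (length $0$) expansion, so the comparison is automatic there and the content is concentrated over $B_0$. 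Since all four fibered categories are groupoids, full faithfulness will reduce to comparing automorphism groups, using that the length is a complete isomorphism invariant on each side.

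For essential surjectivity I would compute the base change directly. Writing $f:X\to\sA$ for the flat map attached to $(X,D)$, Lemma~\ref{Lem:exp-basic-comp} gives $f^*\cO_\sA(\sD)=\cO_X(D)$, so $D=X\times_\sA\sD$ and $N_{\sD/\sA}|_D=N_{D/X}$. Applying $X\times_\sA(-)$ to the gluing $\sA[\ell]=\sA\sqcup_\sD P^\sA_1\sqcup_\sD\cdots\sqcup_\sD P^\sA_\ell$ then replaces $\sA$ by $X$, each $P^\sA_i=\PP_\sD(N_{\sD/\sA}\oplus\cO)$ by $\PP_D(N_{D/X}\oplus\cO)=P$, and each gluing copy of $\sD$ by a copy of $D$, compatibly. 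Hence $\Phi^{(X,D)}(\sA[\ell])=X\times_\sA\sA[\ell]\cong X[\ell]$ over $(X,D)$, so $\Phi^{(X,D)}$ is essentially surjective; the identical computation with $\sA^2\times_\sA B$ in place of $\sA$ gives essential surjectivity of $\Phi^{X/B}$.

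The heart of the argument is full faithfulness, and this is exactly where the hypotheses on $D$ enter. By Lemma~\ref{lem:accordion-auts} the automorphism group of $X[\ell]$ in $\cTnaive(X,D)(k)$ is $\Gm(D)^\ell$, and that of $\sA[\ell]$ in $\cT(k)$ is $\Gm(\sD)^\ell$, with $\Phi^{(X,D)}$ acting factorwise by the base-change map $\Gm(\sD)\to\Gm(D)$ induced by $D\to\sD$. Now $\sD=\BGm$ and $H^0(\BGm,\cO)=k$, so $\Gm(\sD)=k^*$; since $D$ is proper and connected, $\Gm(D)=H^0(D,\cO_D^*)=k^*$ as well, and the base-change map sends the constant $t$ to the constant $t$, hence is an isomorphism $k^*\xrightarrow{\sim}k^*$. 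Thus $\Phi^{(X,D)}$ induces isomorphisms on all automorphism groups; combined with the fact that it preserves the length (a complete isomorphism invariant on each side), so that $\Phi a\cong\Phi b$ forces $a\cong b$, this gives full faithfulness. I expect this to be the main obstacle: it is precisely the properness and connectedness of $D$ that force $\Gm(D)=k^*$ and make the map surjective. Without them the target would acquire more units than the source $\Gm(\sD)=k^*$, and $\Phi$ would fail to be full.

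The degeneration case is handled identically. The intersection divisor of the universal degenerate fiber of $\sA^2\to\sA$ is $\{x_1=x_2=0\}=\BGm\times\BGm$, whose global units are again $k^*$. By Lemma~\ref{lem:accordion-auts} (degeneration case) the automorphism groups of both $(\sA^2)[\ell]$ and $X_b[\ell]$ are $\ell$-fold products of $\Gm$ of the respective intersection divisors, and the base-change map $\Gm(\BGm\times\BGm)=k^*\to\Gm(D)=k^*$ is once more the identity, using that $D$ is proper and connected. This yields full faithfulness of $\Phi^{X/B}$, and together with the essential surjectivity above completes the equivalence on geometric points.
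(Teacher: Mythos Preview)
Your argument is correct and follows essentially the same route as the paper's proof: both identify the isomorphism classes on each side by the length $\ell$ and then compare automorphism groups via Lemma~\ref{lem:accordion-auts}. The paper simply asserts that both automorphism groups equal $\Gm(k)^\ell$ and that the induced map is an isomorphism; you fill in the details by explicitly identifying $\Gm(\sD)=k^\ast$ (since $\sD=\BGm$) and $\Gm(D)=k^\ast$ (using that $D$ is smooth, proper, and connected over the algebraically closed field $k$), and observing the induced map is the identity. Your explicit base-change computation for essential surjectivity and your handling of the case $\Phi a\cong\Phi b\Rightarrow a\cong b$ are also spelled out more fully than in the paper, but the overall strategy is the same.
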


\begin{proof}
  Indeed, on both sides the geometric objects are expansions, which are indexed up to isomorphism by non-negative integers in either case.  Furthermore, the automorphism group of an expansion of length $n$ over an algebraically closed field $k$ is $\Gm(k)^n$ (as we saw in Lemma~\ref{lem:accordion-auts}) both in the stack of expansions and the stack of na\"\i ve expansions.  The map $\Gm(k)^n \rightarrow \Gm(k)^n$ is an isomorphism, so the functor is also fully faithful as required.
  \end{proof}

\subsection{The trouble with na\"\i ve expansions}\label{Sec:trouble}

Throughout the discussion so far, we have used na\"\i ve expansions as a building block of our stacks of expansions. The stacks $\cT$, $\fT$ and $\cT_\sim$ themselves are defined as stacks of na\"\i ve expansions of a universal object. Li's stacks will be studied as substacks of the corresponding stacks of na\"\i ve expansions. And the proofs of the main results use them in several steps.

So why not define the stacks of expansions using na\"\i ve expansions and be done with it? There are several issues that arise with $\cTnaive(Y,D)$ and $\fTnaive(X/B,B_0)$. We describe in the case of the stack of pairs, the case of degenerations being similar.

\subsubsection{Deforming the normal bundle.} First, there are problems with deformation spaces which prevent these stack from being algebraic in general. Assume $Y$ is smooth and projective, $D$ is smooth an irreducible, but assume $H^1(D, \cO_D) \neq 0$. Then the normal bundle $N_{D/ Y}$ has a non-constant deformation to another line bundle $L$. It follows that the na\"\i ve expansion $Y' = Y \sqcup_D P$ admits a non-constant deformation to $Y'' = Y \sqcup_D\ \PP_D(\cO\oplus L)$, and the latter is not an expansion.

Assuming $H^1(D, \cO_D) = 0$ would clearly be too restrictive. Jun Li overcame this issue by defining a stack where local models are fixed, excluding such deformation by definition.  
We would be content if this covered all the situations we care about, but this is not the case.

\subsubsection{Disconnected divisor} Consider now the case where $Y$ is smooth and projective, $D$ is smooth, $H^1(D, \cO_D) = 0$ but $D$ decomposes as $D_1\sqcup D_2$.  Then the expansion $Y' = Y \sqcup_D P_D$ admits a non-constant deformation to $Y'' = Y \sqcup_{D_1} P_{D_1}$, so again the stack of na\"\i ve expansions is not algebraic. Li's approach still gives us a good stack of expansions; the logarithmic approach however diverges - it gives us the stack $\cTnaive(Y,D_1) \times \cTnaive(Y,D_2).$ Of course, one can work Li's 
solution, but things can still get a bit more complicated.
\Dan{Find a place where this situation does arise in the literature} 

\subsubsection{Non-proper divisor} Now drop the assumption that $Y$ and $D$ are proper, so $D$ could be affine of positive dimension. Then the automorphism group of a na\"\i ve expansion is no longer finite dimensional, and the diagonal of the stack of expansions is not locally of finite presentation. This case is not resolved by either Li's or Kim's approach. The most common situation where this occurs in the literature is when $Y$ is the total space of a vector bundle over a projective variety $Y_0$,and $D$ is the pullback of a divisor $D_0$ on $Y_0$. 

Of course, such cases can be treated directly, for instance by pulling back expansions of $(Y_0, D_0)$, which amounts to an ad-hoc application of the principle used in this paper. Our approach using the universal pair $(\sA, \sD)$ allows one to expand any pair without restriction and without need for ad-hoc constructions.

\section{Comparison with Li's approach} \Jonathan{section numbering was messed up in this section; is the change okay?}
\label{sec:TLi}

\subsection{Expanded degenerations}
\label{sec:exp-deg-li}

Throughout this section we assume that $B$ is a smooth complex curve, $B_0 = \{b_0\}$ a point, $X$ is smooth,  $X/B$ is projective and $D$ is  smooth and irreducible.\Dan{fix notation of degenerations vs pairs!!}

\begin{corollary}[{of Lemma~\ref{lem:accordion-auts}; cf.\ \cite[Corollary~1.4]{Li1}}] \label{cor:accordion-auts}
  If $X$ is proper and $D$ is connected then the automorphism group of an expansion of $(X,D)$ of length~$\ell$ is $(\CC^\ast)^\ell$.
\end{corollary}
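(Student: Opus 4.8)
The plan is to read this off directly from Lemma~\ref{lem:accordion-auts}, which has done essentially all of the work. First I would observe that the hypotheses of the corollary are a special case of the setting of that lemma: we are over the algebraically closed field $\CC$, and $(X,D)$ is a smooth pair with $X$ proper and $D$ connected. Part~(3) of Lemma~\ref{lem:accordion-auts} then tells us directly that the automorphism group of an expansion of length~$\ell$ is $\Gm(D)^\ell$, where $\Gm(D)$ denotes the group of $D$-points of the multiplicative group, i.e. $\Gm(D) = \Gamma(D, \cO_D^\times)$, the group of nowhere-vanishing global regular functions on $D$.

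The only remaining point is to identify $\Gm(D)$ with $\CC^\ast$ under the stated hypotheses. Since $X$ is proper over $\CC$ and $D \subset X$ is a closed subscheme, $D$ is itself proper over $\CC$; being a smooth divisor it is reduced, and by assumption it is connected. I would then invoke the standard fact that a proper, connected, reduced scheme over an algebraically closed field has only constant global functions: $\Gamma(D,\cO_D)$ is a finite-dimensional reduced $\CC$-algebra which, having no nontrivial idempotents by connectedness, is a field finite over the algebraically closed field $\CC$, hence equal to $\CC$. Consequently $\Gm(D) = \Gamma(D,\cO_D)^\times = \CC^\times = \CC^\ast$.

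Combining these two observations gives
\begin{equation*}
  \Aut\bigl(X[\ell]\bigr) \;\simeq\; \Gm(D)^\ell \;\simeq\; (\CC^\ast)^\ell,
\end{equation*}
which is exactly the assertion of the corollary.

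There is no serious obstacle here; the statement is genuinely a corollary, and the substance is entirely contained in Lemma~\ref{lem:accordion-auts}(3). The only thing worth stating with care is the computation $\Gm(D) = \CC^\ast$, and in particular the role of \emph{connectedness}: without it one would instead get a product of copies of $\CC^\ast$ indexed by the connected components of $D$ (compare the ``disconnected divisor'' discussion in Section~\ref{Sec:trouble}), so connectedness is precisely what guarantees a single factor $\CC^\ast$ per copy of $P$. Properness of $D$ is what rules out nonconstant invertible functions and keeps the automorphism group finite-dimensional.
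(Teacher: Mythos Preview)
Your proposal is correct and is exactly the intended argument. In the paper the statement is given no separate proof at all: it is labeled a corollary of Lemma~\ref{lem:accordion-auts} and left at that, so you are simply spelling out the one-line deduction the paper suppresses, namely that part~(3) gives $\Gm(D)^\ell$ and that properness and connectedness of $D$ over $\CC$ force $\Gamma(D,\cO_D)=\CC$ and hence $\Gm(D)=\CC^\ast$. Your closing remarks on why connectedness and properness are each needed are accurate and on point.
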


A family of expanded degenerations over a base scheme $S$ is a family of schemes over $S$ that is isomorphic, \emph{\'etale}-locally in $S$, to one of a collection of standard models.  We recall the definition of the standard models below.

Suppose that $\pi : X \rightarrow B$ is a flat family whose general fiber is smooth and whose fiber over $0$ is the union of two smooth schemes along a smooth divisor.  J. Li constructs a collection of standard models 
\begin{equation} \label{eqn:18}
  \xymatrix{
    X[\ell] \ar[r] \ar[d] & X \ar[d] \\
    B[\ell] \ar[r] & B
  }
\end{equation}
for expansions of $X_0$.  
\begin{definition}[{\cite[Definition~1.9]{Li1}}] \label{def:Li-deg}
  Let $\fTLi(X/B,b_0)$ be the fibered category whose $S$-points are families $Z \rightarrow S$ such that, \'etale-locally in $S$, there exists a map $S \rightarrow \AA^{\ell + 1}$ and an isomorphism $Z \cong S \fp_{\AA^{\ell + 1}} X[\ell]$.  Morphisms between such families are morphisms of schemes that commute with the projections to $S \times X$.
\end{definition}

It will follow from the equivalence proved in Theorem \ref{Th:main-T} that up to canonical equivalence, the above definition of $\fTLi(X/B,b_0)$ does not depend on $X$. 
We summarize J.\ Li's construction of the standard models below, taking a slightly different perspective; see \cite[Section~1.1]{Li1} for more details.

For Li's construction, it is enough to work in a small neighborhood of $b_0 \in B$.  We can therefore assume that there is an \'etale map $B \rightarrow \AA^1$ whose fiber over $0 \in \AA^1$ is $b_0 \in B$.  We will construct the standard models $X[\ell] \rightarrow \AA^1[\ell]$ of $X \rightarrow \AA^1$, and then we can define $B[\ell] = \AA^1[\ell] \fp_{\AA^1} B$, through which the map $X[\ell] \rightarrow \AA^1[\ell]$ must factor.    \Jonathan{there's a notation conflict here:  the $\ell$-th expansion of $X$ over $\sA[\ell] = \sA^{\ell}$ is the quotient of the $\ell$-th expansion of $X$ over $\AA[\ell] = \AA^\ell$ by the action of $\Gm^{\ell}$, but the notation $X[\ell]$ is used for both...}

\begin{remark}
  It is possible to avoid having to pass to an \'etale neighborhood of $b_0$ by taking the base of the degeneration to be $\sA = [\AA^1 / \Gm]$ instead of $B$.  The Cartier divisor $b_0$ gives a map $B \rightarrow \sA$, and expansions of the base $B$ are obtained by base change from expansions of $\sA$:  we have $B[\ell] = B \fp_{\sA} \sA[\ell]$ and $\sA[\ell] = \sA^\ell \rightarrow \sA$ is the multiplication map.  

  We leave it to the reader to verify that the discussion in this section carries over word for word replacing the symbol $\AA$ by $\sA$ (and $\AA^\ell$ by $\sA^\ell$).\Dan{move this discussion to the section about limits}
\end{remark}

We will have $\AA^1[\ell] = \AA^{\ell + 1}$ and the projection $\AA^1[\ell] = \AA^{\ell + 1} \rightarrow \AA^1$ sends $(t_0, \ldots, t_{\ell})$ to $\prod t_i$.  The construction of $X[\ell]$ is inductive.  Assume that we have a construction of $\pi_1^X : X[1] \rightarrow \AA^1[1] = \AA^2$ for any family $X \rightarrow \AA^1$ of the type described in Section \ref{Sec:basic-deg}.  We will see that composing $\pi_1$ with the first projection gives a new family $p_1 \circ \pi_1 : X[1] \rightarrow \AA^1$ as in \ref{Sec:basic-deg}.  Expanding this family using the same construction, we obtain a new family $X[1][1] \rightarrow \AA^2$ and a commutative diagram
\begin{equation*}
  \xymatrix{
    X[1][1] \ar[r]^{\pi_1^{X[1]}} \ar[dd] & X[1] \ar[d] \\
    & \AA^1[1] \ar[d]^{p_1} \\
    \AA^1[1] \ar[r]^m & \AA^1
  }
\end{equation*}
where $m : \AA^1[1] = \AA^2 \rightarrow \AA^1$ is the multiplication map and $p_1 : \AA^1[1] = \AA^2 \rightarrow \AA^1$ is the first projection.  We can identify $(\AA^1[1], m) \fp_{\AA^1} (\AA^1[1], p_1)$ with $\AA^3 = \AA^1[2]$ and defining $X[2] = X[1][1]$, we obtain from the diagram above a map $X[2] \rightarrow \AA^1[2]$.  Composing 
\begin{equation*}
  X[2] = X[1][1] \rightarrow \AA^1[2] = \AA^3 \xrightarrow{p_1} \AA^1
\end{equation*}
we obtain a new family as in \ref{Sec:basic-deg}.  Inductively, we obtain $X[\ell+1] = X[\ell][1] \rightarrow \AA^1[\ell] = \AA^{\ell + 1}$, exactly as above.

It remains to explain the construction of $X[1]$ and verify that the composition $X[1] \rightarrow \AA^1[1] \xrightarrow{p_1} \AA^1$ is a family of the type described in Section~\ref{Sec:basic-deg}.  First, let $X'$ be the base change of $X \rightarrow \AA^1$ by the multiplication map $m : \AA^2 \rightarrow \AA^1$ sending $(t_1, t_2) \mapsto t_1 t_2$.  The total space of $X'$ is singular along $D \times (0,0)$.  We construct $X[1]$ as a small resolution of $X'$.

Let $Y_1$ and $Y_2$ be the two irreducible components of $X_0$.  Each is a divisor in $X$.  Let $(\cO_X(Y_i), y_i)$ be a line bundle and section such that the vanishing locus of $y_i$ is $Y_i$.  These are determined up to unique isomoprhism by $Y_i$.

Let $P$ be the $\PP^1$-bundle $\PP(\cO_{X'} \oplus m^\ast \cO_X(Y_2)) \cong \PP(m^\ast \cO_X(Y_1) \oplus \cO_{X'})$ where $m : X' \rightarrow X$ (abusively) denotes the projection.  If $(t_1, t_2)$ denote the coordinates on $\AA^2$, then $(t_1, y_1)$ is a section of $P$ over the complement of $D \times (0,0)$ in $X'$.  Let $X[1]$ be the closure of the image of this section in $P$.

\begin{remark}
  \'Etale locally $X'$ admits a smooth morphism  to the locus $\set{y_1 y_2 = t_1 t_2}$ inside $\AA^4$ and $X[1]$ resolves the indeterminacy of the rational map $(y_1, t_1) : X' \rightarrow \PP^1$.
\end{remark}

Composition of the projections $X[1] \rightarrow X'$ and $X' \rightarrow \AA^2$ gives us a map $\pi_1 : X[1] \rightarrow \AA^2$.   Away from $0 \in \AA^1$, the fiber of $p_1 \pi_1 : X[1] \rightarrow \AA^1$ can be identified with the projection  $X \times (\AA^1 \smallsetminus 0)^2 \rightarrow (\AA^1 \smallsetminus 0)$ on the last component.  It is therefore smooth away from $0$.  Over $0 \in \AA^1$, the fiber is the union of two smooth schemes along a smooth divisor (one is $Y_2$ and the other is the blow-up of $Y_1 \times \AA^1$ along $D \times 0$, where $D$ is the divisor along which $Y_1$ is joined to $Y_2$ in $X_0$).  We may therefore iterate the construction to obtain the standard models.

\begin{remark}\label{Rem:Li-etale}
  In \cite[Definition~1.9]{Li1}, families of expansions are required to be isomorphic to the standard models over an open cover.  This open cover must be taken in the analytic topology since, as the following variant of Hironaka's famous example demonstrates, Zariski open covers do not suffice to obtain a stack in the \'etale topology.  This is why we have insisted on gluing in the \'etale topology above.\Jonathan{I pared this down a little; is it okay?}

  Let $\tS$ be a pair of rational curves joined at two nodes.  Label the nodes $\ts_1$ and $\ts_2$ and the components $\tS_1$ and $\tS_2$.  This carries a free action of $\ZZ / 2 \ZZ$ which exchanges the components and the nodes.  Let $S$ be the quotient, which is a rational curve joined to itself at a single node.

  Over $\tS$ we construct an equivariant family $\tY$ of expansions of $X = \PP^1 \cp_{\rm point} \PP^1$.  On the complement of $\ts_1$, we take the family induced from the standard model $X[1]$ by the inclusion of $\tS - \ts_1$ as the coordinate axes in $\AA^2$.  Over $\tS - \ts_2$, we take the family induced the same inclusion, but with the axes reversed.

  Gluing these together, we get a family over $\tS$ whose fiber over $\ts_1$ has $3$ components.  Passing from $\ts_1$ to $\tS_1$ smoothes the second node in the fiber over $\ts_1$ so that a generic fiber of $\tY$ over $\tS_1$ has two components.  Passing to $\ts_2$ degenerates the first of these components to the union of two components, so that the fiber over $\ts_2$ has $3$ components.  Passing now to $\tS_2$ smoothes the second node, and passing to $\ts_1$ degenerates the first component.

  There is a free action of $\ZZ / 2 \ZZ$ on $\tY$  making the projection to $\tS$ equivariant.  Let $Y \rightarrow S$ be the quotient.  This family is not induced in \emph{any} Zariski neighborhood $S^\circ$ of the node from any $X[n]$ via a map $S^\circ \rightarrow \AA^\ell$.  Indeed, if it were then $Y$ would be quasi-projective.  But if $m_1, m_2, m_3$ are the degrees of the restriction of an ample line bundle to the fiber over the node, we get
  \begin{gather*}
    m_1 + m_2 = m_1 \\
    m_2 + m_3 = m_3 ,
  \end{gather*}
  so any map to a projective space must collapse the middle component.  Thus this family does not appear in the \emph{Zariski} stack.

  This point was brought up and clarified in discussions with Michael Thaddeus.
\end{remark}

\subsection{Expanded pairs}
\label{sec:exp-pairs-li}

J. Li constructs the moduli space of expanded pairs  by reduction to the construction for expanded degenerations:  the first expanded pair is a degeneration of the kind studied in the last section.

If $(X, D)$ is a smooth pair, the first expansion $X[1]$ is obtained by blowing up $X \times \AA^1$ at the locus $(D, 0)$.  This gives a family over $\AA^1$ whose fiber over~$0$ is the union of $X$ and $\PP(N_{D/X} \oplus \cO_D)$ along $D$.  The proper transform of $D \times \AA^1$ is a divisor in $X[1]$.

Expanding $X[1] \rightarrow \AA^1$ as in the last section, we get the standard models $X[\ell] \rightarrow \AA^\ell$.  Note however that there is a shift of index as compared to the standard models for degenerations.  We use the notation $\cTLi(X,D)$ for the stack defined this way. 

\subsection{Algebraicity and comparison}
We note the canonical embeddings which allow us to compare our stacks with   Li's stacks.
\begin{definition} 
Let $\cTLi(X,D) \hookrightarrow \cTnaive(X,D)$ and $\cTLi(X/B,b_0) \hookrightarrow \cTnaive(X/B,b_0)$ be the embeddings given by viewing Li's objects as na\"ive expansions and arrows as morphsims of na\"ive expansions.\end{definition}


\begin{proposition} \label{prop:TLi-algebraic}
  The stacks $\cTLi(X,D)$ and  $\fTLi(X/B,B_0)$ are algebraic.
\end{proposition}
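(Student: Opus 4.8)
The plan is to verify the two conditions for algebraicity of each stack: that its diagonal is representable by algebraic spaces and locally of finite presentation, and that it admits a smooth surjection from a scheme.

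For the diagonal, I would exploit the canonical embeddings $\cTLi(X,D) \hookrightarrow \cTnaive(X,D)$ and $\fTLi(X/B,b_0) \hookrightarrow \fTnaive(X/B,b_0)$. These are fully faithful, since in each case Li's objects form a full subcategory of the naïve expansions (being étale-locally a standard model is a property, carrying no extra data), so each embedding is a monomorphism of stacks. Consequently the diagonal of $\cTLi(X,D)$ (resp. $\fTLi(X/B,b_0)$) is the base change of the diagonal of $\cTnaive(X,D)$ (resp. $\fTnaive(X/B,b_0)$) along the embedding. Under the standing hypotheses of this section, $X$ is proper and $X \to B$ is a proper degeneration, so Proposition \ref{prop:diagonal} applies and shows that the diagonals of the naïve stacks are representable by algebraic spaces and locally of finite presentation; the same then holds after base change for the diagonals of Li's stacks.

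For the atlas, I would use Li's standard models directly. By the inductive construction preceding Definition \ref{def:Li-deg}, each standard model $X[\ell] \to \AA^{\ell+1}$ defines a morphism $u_\ell\colon \AA^{\ell+1} \to \fTLi(X/B,b_0)$, and (with the index shift of Section \ref{sec:exp-pairs-li}) each $X[\ell] \to \AA^\ell$ defines $u_\ell\colon \AA^\ell \to \cTLi(X,D)$. The defining property of Li's stacks---that every family is, étale-locally on its base, a pullback of some standard model---says exactly that the collection $\{u_\ell\}_{\ell \ge 0}$ is jointly surjective, and remains so after any base change. Each $u_\ell$ is representable (by the diagonal analysis above) and locally of finite presentation, since the standard models are finitely presented families.

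The main obstacle is to prove that each $u_\ell$ is \emph{smooth}. Here I would combine the smoothness of the base $\AA^{\ell+1}$ (resp. $\AA^\ell$) with the versality of the standard models: the coordinates $t_0,\dots,t_\ell$ are precisely the parameters smoothing the $\ell$ nodes of the degenerate fibers, node-smoothing deformations are unobstructed and span the deformation space of an expansion, so $X[\ell]$ is versal at each of its points. A versal family with smooth base induces a smooth map to the moduli stack; concretely I would check the infinitesimal lifting criterion for $u_\ell$, lifting a chart $T_0 \to \AA^{\ell+1}$ across a square-zero extension $T_0 \hookrightarrow T$ by formal smoothness of $\AA^{\ell+1}$, with the compatibility of the trivialization over $T$ controlled by the representable Isom-spaces already produced in Proposition \ref{prop:diagonal}; the relevant obstruction vanishes because the automorphisms $\Gm^\ell$ of Corollary \ref{cor:accordion-auts} make $u_\ell$ a torsor-like map of relative dimension $\ell$ over the deepest stratum. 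Granting smoothness, the $u_\ell$ assemble into a smooth surjection $\coprod_\ell \AA^{\ell+1} \to \fTLi(X/B,b_0)$ (resp. from $\coprod_\ell \AA^\ell$ onto $\cTLi(X,D)$), completing the proof. The pair case may alternatively be reduced to the degeneration case, since Li constructs $\cTLi(X,D)$ from the degeneration $X[1] \to \AA^1$ as in Section \ref{sec:exp-pairs-li}.
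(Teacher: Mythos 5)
Your treatment of the diagonal coincides with the paper's: both pull back the diagonal of the na\"ive stack along the embedding $T_\Li \hookrightarrow T_\naive$ and invoke Proposition~\ref{prop:diagonal}, and your verification that the chart maps $\AA^n \rightarrow T_\Li$ are jointly surjective, representable, and locally of finite presentation is also the paper's argument. The gap is in the one step you yourself flag as the main obstacle: smoothness of the charts.

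Your smoothness argument rests on the claim that ``node-smoothing deformations are unobstructed and span the deformation space of an expansion,'' so that $X[\ell]$ is versal and a versal family with smooth base maps smoothly to the moduli stack. But versal for \emph{which} deformation functor? If you mean deformations of the underlying family, i.e.\ deformations as a na\"ive expansion, the claim is false in the generality of this section: the standing hypotheses require only that $D$ be smooth, projective and irreducible, not that $H^1(D,\cO_D)=0$. When $H^1(D,\cO_D)\neq 0$ the normal bundle $N_{D/X}$ deforms nontrivially, so $X \sqcup_D \PP(N_{D/X}\oplus\cO_D)$ deforms to $X \sqcup_D \PP(L\oplus\cO_D)$ with $L\not\simeq N_{D/X}$; this deformation is not a node smoothing and is not induced from $\AA^{\ell+1}$. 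This is precisely the pathology of Section~\ref{Sec:trouble}, which is why the standard models are \emph{not} versal for na\"ive deformations. If instead you mean deformations inside $\cTLi(X,D)$ or $\fTLi(X/B,b_0)$, the claim is true, but only because Li's definition \emph{declares} every object over any base to be, \'etale-locally, a pullback of a standard model---so the claim is the statement being proved, and the argument is circular. The closing appeal to the $\Gm^\ell$-automorphisms of Corollary~\ref{cor:accordion-auts} making $u_\ell$ ``torsor-like'' does not repair this: formal smoothness of $\AA^{\ell+1}$ as a scheme lets you lift the \emph{map} $T_0 \rightarrow \AA^{\ell+1}$ across a square-zero extension, but gives no control over whether the lifted map induces the given family over $T$, and that compatibility is the entire content of smoothness of $u_\ell$. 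The paper's proof sidesteps deformation theory altogether: formal smoothness of $\AA^n \rightarrow T_\Li$ holds \emph{by definition}, since an object of $T_\Li(S')$ is required to lift \'etale-locally to a chart, and applying this when $S'$ is an infinitesimal extension of $S$ is exactly the formal lifting criterion; combined with representability and local finite presentation (which you did establish), this yields smoothness. Replacing your versality paragraph with this definitional observation fixes the proof.
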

\begin{proof}
To treat degenerations and pairs together, we use a common notation as in the proof of Proposition \ref{prop:diagonal}:  $X$ is either the total space of a degeneration or a pair, $X'$ is an expansion, and $T_{\Li}$ and $T_\naive$ are the relevant stacks of expansions. 

  The diagonal of $T_{\Li}$  is pulled back from the diagonal of $T_\naive$ via the embedding $T_\Li\hookrightarrow T_\naive$. The diagonal of $T_\naive$ is representable by Proposition~\ref{prop:diagonal}.
  
  On the other hand, the map $\AA^n \rightarrow T_\Li$ induced from the standard models is formally smooth by definition \cite[Definition~1.9]{Li1}:  any object of $T_\Li(S)$ is required to lift, \'etale locally in $S$, to an object of $\AA^n$.  Applying this to an infinitesimal extension of $S$  implies the formal criterion for smoothness.  Furthermore, $\AA^n \rightarrow T_\Li$ is locally of finite presentation, because $\AA^n$ is locally of finite presentation and the diagonal of $T_\Li$ is locally of finite presentation.  Since $\AA^n \rightarrow T_\Li$ is also representable, this implies that it is smooth and since the maps $\AA^n \rightarrow T_\Li$ cover $T_\Li$, this implies that $T_\Li$ is an algebraic stack.
\end{proof}

The following proposition completes the proof of Theorem \ref{Th:main-T}.

\begin{proposition}
 Assume $D$ is smooth, projective, and connected. Then the map $\Phi^{(Y,D)}$ of Definition \ref{Def:Phi} factors through an equivalence $\cT \rightarrow \cTLi(X,D)$ (for a pair) and the map $\Phi^{X/B}$ factors through  $\fT\times_\sA B \rightarrow \fTLi(X/B,b_0)$ (for a degeneration).
\end{proposition}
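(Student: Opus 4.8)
I would treat pairs and degenerations uniformly, writing $T$ for $\cT$ (resp.\ $\fT\times_\sA B$), $T_{\naive}$ for $\cTnaive(X,D)$ (resp.\ $\fTnaive(X/B,B_0)$), $T_{\Li}$ for $\cTLi(X,D)$ (resp.\ $\fTLi(X/B,b_0)$), and $\Phi$ for the functor of Definition~\ref{Def:Phi}, exactly as in the proofs of Propositions~\ref{prop:diagonal} and~\ref{prop:TLi-algebraic}. The plan has three steps: identify Li's standard models with base changes of standard expansions of the universal object, deduce that $\Phi$ factors through $T_{\Li}$, and upgrade the equivalence on geometric points to an equivalence of stacks.

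First I would show that Li's standard model $X[\ell]\to \AA^N$ (with $N=\ell$ for pairs and $N=\ell+1$ for degenerations) is the base change along the flat map $X\to\sA$ (resp.\ $X\to\sA^2$) of the standard length-$\ell$ expansion of $(\sA,\sD)$ (resp.\ of $\sA^2/\sA$) over the same base $\AA^N$. This is a local computation: the splitting divisors of $X[\ell]$ are the copies of $D=X\times_\sA\sD$, which are pulled back from the copies of $\sD$, and the blow-ups and $\PP$-bundle completions used in Sections~\ref{sec:exp-deg-li}--\ref{sec:exp-pairs-li} to build $X[\ell]$ commute with this base change because $X\to\sA$ classifies $(\cO_X(D),\mathbf{1}_D)$. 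In particular the standard $(\sA,\sD)$-expansion over $\AA^N$ is an object of $T(\AA^N)$ carried by $\Phi$ to $X[\ell]$, which gives a morphism $g:\AA^N\to T$ together with an identification $\Phi\circ g\simeq a$, where $a:\AA^N\to T_{\Li}$ is Li's smooth atlas from Proposition~\ref{prop:TLi-algebraic}. Since membership in $T_{\Li}\subset T_{\naive}$ is an \'etale-local condition and, by the local structure of $T$ established via $T\simeq\TGV$, every object of $T(S)$ is \'etale-locally pulled back from a standard $(\sA,\sD)$-expansion, it follows that $\Phi$ factors through the monomorphism $T_{\Li}\hookrightarrow T_{\naive}$, yielding $\overline{\Phi}:T\to T_{\Li}$.

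Next I would prove that $\overline{\Phi}$ is fully faithful. For objects $\xi_1,\xi_2\in T(S)$, both $\uIsom_S(\xi_1,\xi_2)$ and $\uIsom_S(\overline{\Phi}\xi_1,\overline{\Phi}\xi_2)$ are algebraic spaces over $S$ (by algebraicity of $T$ and by Proposition~\ref{prop:diagonal}), and $\overline{\Phi}$ induces a morphism between them, equivariant for the induced homomorphism of relative automorphism group schemes $\underline{\Aut}_S(\xi_1)\to\underline{\Aut}_S(\overline{\Phi}\xi_1)$. By Lemma~\ref{lem:accordion-auts} these group schemes are products of copies of $\Gm(D)$ indexed by the splitting divisors, and since $D$ is smooth, proper and connected both equal the relative torus $(\Gm)^\ell_S$; the base change $X\times_\sA(-)$ identifies the splitting divisors together with their $\Gm$-actions, so this homomorphism is the identity---the relative enhancement of the isomorphism of geometric automorphism groups recorded in Proposition~\ref{Prop:geometric-points}. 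An equivariant map of torsors under an isomorphism of groups is an isomorphism, and whether the $\uIsom$ spaces are empty is detected on geometric points, where $\overline{\Phi}$ is an equivalence by Proposition~\ref{Prop:geometric-points}; hence $\overline{\Phi}$ is fully faithful.

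Finally I would conclude. Full faithfulness makes the square
\[
\xymatrix{
\AA^N \ar[r]^-{g} \ar[d]_-{\mathrm{id}} & T \ar[d]^-{\overline{\Phi}} \\
\AA^N \ar[r]^-{a} & T_{\Li}
}
\]
$2$-cartesian: an $S$-point $(\xi,s,\overline{\Phi}\xi\simeq a(s))$ of $T\times_{T_{\Li}}\AA^N$ gives $\overline{\Phi}\xi\simeq\overline{\Phi}(g(s))$, whence a unique isomorphism $\xi\simeq g(s)$, identifying $T\times_{T_{\Li}}\AA^N$ with $\AA^N$. Thus the base change of $\overline{\Phi}$ along the smooth cover $a$ is the identity of $\AA^N$, and since being an isomorphism is smooth-local on the target, $\overline{\Phi}$ is an equivalence. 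The main obstacle is the full-faithfulness step: passing from the equivalence on geometric points of Proposition~\ref{Prop:geometric-points} to an isomorphism of the relative $\uIsom$-spaces over an arbitrary base, which is exactly where the hypotheses that $D$ be smooth, proper and connected are used to pin down $\underline{\Aut}_S=(\Gm)^\ell_S$.
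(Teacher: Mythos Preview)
Your strategy differs from the paper's. The paper shows that Li's standard-model charts $\AA^n\to T$ and $\AA^n\to T_{\Li}$ are both smooth with relative automorphism group $\Gm^n$ over the origin, so they descend to \emph{\'etale} maps $\sA^n=[\AA^n/\Gm^n]\to T$ and $\sA^n\to T_{\Li}$ fitting in a commuting triangle. Since \'etaleness is \'etale-local on the source and the $\sA^n$ cover $T$, the map $\overline{\Phi}:T\to T_{\Li}$ is \'etale; combined with the equivalence on geometric points (Proposition~\ref{Prop:geometric-points}) this finishes.

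Your route---full faithfulness first, then a cartesian square plus smooth descent---has a gap in Step~2. You assert that $\underline{\Aut}_S(\xi)$ and $\underline{\Aut}_S(\overline{\Phi}\xi)$ are both ``the relative torus $(\Gm)^\ell_S$'', citing Lemma~\ref{lem:accordion-auts}. But that lemma is only stated over an algebraically closed field. Over a general base the length of the expansion varies and the inertia is \emph{not} a torus: already for the chart $\AA^1\to\sA\to\cT$ parametrizing expansions of length $\le 1$, the pulled-back inertia is $\{(t,\lambda):(\lambda-1)t=0\}\subset\AA^1\times\Gm$, which is neither flat over $\AA^1$ nor a relative $\Gm$. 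So identifying the map of group schemes with ``the identity of $(\Gm)^\ell_S$'' is unjustified; what you call the ``relative enhancement'' of Proposition~\ref{Prop:geometric-points} is exactly the missing content. Without it the torsor argument does not run (and for a pseudo-torsor under a non-flat group scheme, non-emptiness on geometric fibers does not automatically propagate either). The paper's \'etale-atlas argument sidesteps this by reducing everything to the pointwise comparison.
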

\begin{proof} 

First consider the case of pairs.

To see that the factorization exists, note that we have commutative diagrams
\begin{equation*} \xymatrix{
\AA^n \ar[dr] \ar[d] \\
\cT \ar[r]_<>(0.5){\Phi} & \cTnaive(X,D) 
} \end{equation*}
coming from Li's standard models so it will be sufficient to show that the maps $\AA^n \rightarrow \cT$ form a smooth cover.  Since $\cT \simeq \TGV$, it is enough to show that $\AA^n \rightarrow \TGV$ is a versal deformation space for the object $Q$ over $0 \in \AA^n$.  Recall that $Q$ is a chain of $n+1$ rational curves with two marked points on the first component and one marked point on the last component.  The $n$ directions in $\AA^n$ correspond to deforming the $n$ nodes of the chain, and this is well known to be a versal deformation space.

  Note that the automorphism group of $Q$ is $\Gm^n$, acting on $\AA^n$ by scaling the coordinates.  Therefore we obtain an \'etale morphism $\sA^n = [ \AA^n / \Gm^n ] \rightarrow \cT$.

  The composition $\AA^n \rightarrow \cT \rightarrow \cTLi(X,D)$ is also smooth, as we saw in the proof of Proposition~\ref{prop:TLi-algebraic}.  By Lemma~\ref{lem:accordion-auts}, the automorphism group of the image of $0 \in \AA^n$ in $\cTLi(X,D)$ is $\Gm^n$, so there is an induced \'etale map $\sA^n = [ \AA^n / \Gm^n ] \rightarrow \cTLi(X,D)$.  Therefore $\sA^n$ is \'etale over both $\cT$ and $\cTLi(X,D)$.  This implies that the map $\cT \rightarrow \cTLi(X,D)$ is \'etale.\Dan{Streamline!!!}

  Finally, we note that $\cT \rightarrow \cTLi(X,D)$ induces an equivalence on geometric points by Proposition \ref{Prop:geometric-points}.  
  
Next we consider degenerations.  It is possible to deduce that $\Phi^{X/B}$ an isomorphism formally from the isomorphism between the moduli spaces of expanded pairs and degenerations (Proposition~\ref{prop:TisT} and Section~\ref{sec:exp-pairs-li}), so we will only sketch the geometric argument parallel to the one given above for pairs.  

We show that $\Phi^{X/B}$ induces an equivalence $\fT \fp_{\sA} B \rightarrow \fTLi(X/B,b_0)$ is an equivalence in the case $B = \AA^1$, with the general case following by gluing and base change.  To show that $\Phi^{X/B}$ factors through $\fTLi(X/B,b_0)$, we use the isomorphism $\fT\times_\sA \AA^1 \simeq  \fM^{\sst}_{0,0}(X/\AA^1,1)$.  We obtain this time that $\AA^{n+1} \to \fT\times_\sA \AA^1$ is versal as before. The automorphism group  is  $\Gm^{n}$, giving an  \'etale map $ [ \AA^{n+1} / \Gm^{n} ] \rightarrow \fTLi(X/B,b_0)$, and  $\fT\times_\sA B \rightarrow \TLi(X/B,b_0)$ is therefore \'etale. Again the map is an equivalence on geometric objects, completing the argument in this case as well.\Dan{too sketchy}\Jonathan{still needs work; could we use the fact that $\fT \simeq \cT$ somehow to omit the argument?}
\end{proof}

This completes the proof of our Comparison Theorem \ref{Th:main-T}.

\section{Labels and twists}
\label{sec:variants}

\subsection{Labelling by a set}
\label{sec:label-by-set}

It is sometimes desirable---in~\cite{AF}, for example---to work with variants of the stack $\cT$ or $\fT$ which include labels for  the nodes of the fibers.  

Let $F$ be the non-smooth locus of the projection from the universal expanded degeneration to $\fT$.  Let $\sS$ be a set, which we will call the set of labels.  Let $\fT^{\sS}$ be the sheaf on $\fT$ whose sections over $S$ are the locally constant functions from $F_S$ to $\sS$.  This is an \'etale sheaf on $\fT$, so its total space, denoted $\fT^{\sS}$, is a stack equipped with a formally \'etale morphism to $\fT$.

By definition, objects of the stack $\fT^{\sS}$ are expansions of the degeneration $\sA^2 \rightarrow \sA$ together with a continuous labelling of the non-smooth locus in the set $\sS$, and arrows are arrows of expansions preserving the labelling.  The universal expansion $\sX^{\sS}$ is the pullback, via the projection $\fT^{\sS} \rightarrow \fT$ of the universal expansion $\sX$ of $\fT$.

We will also make use of $\cT^{\sS}$, the stack of expanded pairs with a labelling in $\sS$.  In this case, let $F$ be the union of the non-smooth locus of the universal expansion of $(\sA,\sD)$ {\em and its distinguished divisor}.  Let $\cT^{\sS}$ be the \'etale sheaf on $\cT$ whose sections over $S$ are the locally constant maps from $F_S$ to $\sS$.  Since $F \cong F_0 \sqcup \sD$, where $F_0$ is the non-smooth locus of the universal expansion of $(\sA,\sD)$, and since  $\cT \cong \fT$ by Proposition \ref{prop:TisT}, we can identify
\begin{gather*}
  \cT^{\sS} \cong \fT^{\sS} \times \sS .
\end{gather*}
The second factor records the labelling of the distinguished divisor.  We denote the universal family over $\cT^\sS$ by $\cX^\sS$.

\begin{lemma}
  Let $T$ be either $\fT$ or $\cT$ and let $T^{\sS}$ be $\fT^{\sS}$ or $\cT^{\sS}$, respectively.  The projection $T^{\sS} \rightarrow T$ is representable by algebraic spaces. 
\end{lemma}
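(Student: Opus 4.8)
The plan is to verify the definition of representability by algebraic spaces directly: for every scheme $S$ and every morphism $S\to T$, I will show that the fibre product $S\fp_T T^{\sS}$ is an algebraic space. Since $T^{\sS}$ is by construction the total space of an \'etale \emph{sheaf of sets} on $T$, this fibre product is simply the sheaf $\mathcal{F}_S$ on $(\mathrm{Sch}/S)$ whose value on $S'\to S$ is the set of locally constant functions $F_{S'}\to\sS$, where $F_{S'}:=F\fp_T S'$. In particular there are no automorphisms to account for, and it suffices to prove that each $\mathcal{F}_S$ is an algebraic space.

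Being an algebraic space is local for the smooth topology on $S$, and the formation of $\mathcal{F}_S$ commutes with arbitrary base change on $S$, so I may replace $S$ by a smooth cover adapted to the structure of $F$ over $T$. The key geometric input is the local description of $F$ coming from Li's standard models: over a chart $\AA^n\to T$ the pullback of $F$ is the disjoint union $\coprod_{i=1}^n V(t_i)$ of the coordinate divisors (each node, and in the pair case the distinguished divisor, persisting exactly where the corresponding smoothing parameter vanishes), viewed as closed subschemes of $\AA^n$ via their sections. Pulling back along $\AA^n\to T$, I reduce to the case where $F_S$ is a \emph{finite} disjoint union of closed subschemes $Z_i\hookrightarrow S$. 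A locally constant function $F_{S'}\to\sS$ is then the same as a tuple of locally constant functions $(Z_i)_{S'}\to\sS$, so $\mathcal{F}_S$ is a finite fibre product over $S$ of the sheaves $\mathcal{G}_i\colon S'\mapsto\{\text{locally constant }(Z_i)_{S'}\to\sS\}$. As algebraic spaces are closed under finite fibre products, it is enough to represent a single $\mathcal{G}=\mathcal{G}_i$ attached to one closed immersion $Z\hookrightarrow S$.

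The main step is the representability of $\mathcal{G}$, which I will establish by an explicit construction rather than by invoking a general Weil-restriction theorem (the morphism $Z\hookrightarrow S$ is finite but not flat, so the usual hypotheses fail). Let $L$ be the scheme obtained by gluing $\sS$ copies $\{S_a\}_{a\in\sS}$ of $S$ along the common open subscheme $S\smallsetminus Z$; this is a (generally non-separated) scheme, \'etale over $S$, which is an isomorphism over $S\smallsetminus Z$ and has fibre $\sS$ over each point of $Z$. A morphism $S'\to L$ over $S$ is uniquely determined over $S'\smallsetminus Z_{S'}$, while over $Z_{S'}$ it amounts to a morphism $Z_{S'}\to L\fp_S Z=\coprod_{a}Z$ over $Z$, i.e.\ precisely a locally constant function $Z_{S'}\to\sS$; thus $L$ represents $\mathcal{G}$. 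Consequently each $\mathcal{F}_S$ is, smooth-locally on $S$, a scheme, and by smooth descent of algebraic spaces every $\mathcal{F}_S$ is an algebraic space. This shows that $T^{\sS}\to T$ is representable by algebraic spaces.

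I expect the main obstacle to be the reduction of the second paragraph, namely pinning down the local structure of $F$ over $T$: that smooth-locally the non-smooth locus (together with the distinguished divisor, in the pair case) is a disjoint union of closed immersions arising from the smoothing parameters of the standard models, and that only finitely many occur over a quasi-compact chart. Once this is in hand, the representability of $\mathcal{G}$ is the concrete gluing above and the final descent is routine.
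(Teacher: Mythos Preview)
Your argument is correct, and it proceeds along a genuinely different line from the paper's proof. The paper argues abstractly: it observes that $T^{\sS}\to T$ is formally \'etale (because it is the total space of an \'etale sheaf), that it is locally of finite presentation (since $F$ and $\sS\times T$ are), and then verifies that formal sections algebraize uniquely by noting that for $S$ the spectrum of a complete noetherian local ring with closed point $S_0$, the sets of connected components of $F_{S}$ and $F_{S_0}$ coincide. Representability follows from these three ingredients by the standard criterion.

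Your route avoids this criterion entirely and is more constructive. The reduction via the charts $\AA^n\to T$ to a finite disjoint union $F_S=\coprod_i Z_i$ of closed immersions is exactly the ``local structure'' input you flag, and it is available from the explicit description of the standard models. The heart of your argument---representing $S'\mapsto\{\text{locally constant }Z_{S'}\to\sS\}$ by the \'etale $S$-scheme $L$ obtained by gluing $|\sS|$ copies of $S$ along $S\smallsetminus Z$---is valid, though your justification is a touch compressed: a morphism to $L$ is not literally determined by its restrictions to $Z_{S'}$ and its complement (which are not both open), but one checks injectivity because two sections agreeing on $Z_{S'}$ automatically agree on $S'\smallsetminus Z_{S'}$ and then, using that $L\to S$ is \'etale, agree on neighbourhoods of points of $Z_{S'}$; and surjectivity by covering $S'$ by the opens $S'\smallsetminus(Z_{S'}\smallsetminus W_a)$ and gluing the tautological sections into the $a$-th sheet.

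The trade-off: the paper's proof is shorter and fits the general pattern used elsewhere in the paper, but leans on a representability criterion the reader must supply. Your proof is longer but entirely elementary, and it makes the representing object explicit (an \'etale, generally non-separated scheme over each chart), which is arguably more informative.
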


\begin{proof} The morphism is formally \'etale by the discussion above; it is locally of finite presentation over $T$ because both $F$ and $\sS \times T$ are \cite[Proposition~8.13.1]{ega-4-3}. 

We have to check that formal sections algebraize uniquely. \Dan{why do we need this??} \Jonathan{a functor that is \'etale and locally of finite presentation need not be algebraic if it doesn't satisfy algebraization}
Suppose that $S$ is the spectrum of a complete noetherian local ring and $S_0$ is its closed point.  Then the connected components of $F_{S_0}$ are the same as the connected components of $F_S$, so locally constant functions on $F_{S_0}$ algebraize uniquely to locally constant functions on $F_S$.  \end{proof}

Therefore $\cT^\sS$ and $\fT^\sS$ are algebraic:

\begin{proposition}\label{Prop:labels-algebraic}
  There are algebraic stacks $\fT^{\sS}$ and $\cT^{\sS}$, which respectively parameterize expanded degenerations with nodes labelled by $\sS$ and expanded pairs with nodes and distinguished divisor labelled by $\sS$.
\end{proposition}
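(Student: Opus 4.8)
The plan is to deduce the Proposition formally from the two facts already available: that $\fT$ and $\cT$ are algebraic, and that the labelling morphism to them is representable by algebraic spaces. For the first, I would invoke Theorem~\ref{Th:twisted-algebraic} together with its untwisted specializations (Propositions~\ref{Prop:basic-pair-isom} and~\ref{Prop:basic-deg-isom}), which give that $\fT$ and $\cT$ are algebraic stacks, locally of finite type over $\ZZ$. For the second, I would use the preceding Lemma, which shows that for $T$ either $\fT$ or $\cT$ the forgetful morphism $T^{\sS} \to T$ is representable by algebraic spaces and locally of finite presentation. The key point is that this Lemma already upgrades the \emph{formally} \'etale sheaf of $\sS$-valued labellings to a genuinely algebraic one, by algebraizing locally constant functions on $F$ over a complete noetherian local base (using that $F_{S_0}$ and $F_S$ have the same connected components).

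Granting these, I would assemble the standard principle that a stack equipped with a morphism representable by algebraic spaces to an algebraic stack is itself algebraic. Concretely, choose a smooth surjective atlas $U \to T$ from a scheme; base change along $T^{\sS} \to T$ produces a smooth surjection $U^{\sS} := T^{\sS} \fp_T U \to T^{\sS}$ whose source is an algebraic space by the Lemma. Choosing in turn a scheme atlas $V \to U^{\sS}$ and composing, one obtains a smooth surjective morphism $V \to T^{\sS}$ from a scheme. For the diagonal I would observe that an isomorphism of two $S$-points $(x,\lambda)$, $(x',\lambda')$ of $T^{\sS}$ is an isomorphism $x \simeq x'$ in $T$ carrying $\lambda$ to $\lambda'$; hence $\uIsom_{T^{\sS}} \to \uIsom_T$ is the (open and closed) locus where the induced identification of labellings agrees, and since the diagonal of $T$ is representable by algebraic spaces and this labelling condition is representable, the diagonal of $T^{\sS}$ is representable as well. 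Together with the scheme atlas $V \to T^{\sS}$, this shows $T^{\sS}$ is algebraic. The parametrization statements are then immediate from the definitions of $F$ in the two cases — for $\fT^{\sS}$ the non-smooth locus of the universal expansion, and for $\cT^{\sS}$ its union with the distinguished divisor, compatibly with the identification $\cT \cong \fT$ of Proposition~\ref{prop:TisT}.

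I do not expect a genuine obstacle here: the substantive content — that the labelling functor satisfies algebraization and is therefore representable by algebraic spaces, rather than being merely formally \'etale — is precisely what the preceding Lemma supplies, and the present Proposition is bookkeeping on top of it. The only care required is to state the ``representable over algebraic $\Rightarrow$ algebraic'' argument cleanly (in particular handling the diagonal) and to match the two descriptions of $F$ to the claimed moduli interpretations.
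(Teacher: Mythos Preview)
Your proposal is correct and follows essentially the same approach as the paper. In the paper, the Proposition is stated immediately after the Lemma with only the one-line justification ``Therefore $\cT^\sS$ and $\fT^\sS$ are algebraic''; you have simply spelled out in detail the standard argument (atlas by base change, diagonal) that the paper leaves implicit.
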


\subsection{Twisting choice} 
In this section we work over a field of characteristic 0. We will now relate the stack $\cT^\tw$ to Olsson's log-twisting construction.  
\label{sec:twisting-choice} 
It is convenient to use the formalism of Borne and Vistoli \cite{BV} for logarithmic structures. We use the notation $\bN = \ZZ_{>0}$. 

The stack $\cT$  admits a canonical locally free logarithmic structure $(\cT,M_\cT)$ coming from the normal crossings divisor over which the universal expansion fails to be smooth, smilarly for $(\fT,M_\fT)$. Pulling back, this gives a locally free logarithmic structure on $\cT^{\sS}$ and $\fT^\sS$ for any set~$\sS$.  

Following~\cite{BV}, we think about a logarithmic structure as a homomorphism from an \'etale sheaf of monoids $\oM$ into $\sA$.  Put $T = \cT$ or $T = \fT$ as the case warrants and write $T^{\sS}$ for its labelled analogue.  Define $F$ as in Section~\ref{sec:label-by-set}.  We note that, \'etale locally in a $T$-scheme $S$, the connected components of $F$ are in bijection with the generators of $\oM_T$.  Therefore we can use a map $r : F \rightarrow \bN$ to construct a new sheaf of monoids $\oM(r)$ that adds an $r(z)$-th root to the generator corresponding to a connected component $z$ of $F$ whenever $z$ corresponds to a singularity. 

Now, consider a map $\fr : \sS \rightarrow \bN$; we will call it a \emph{twisting choice} because of the way we will use it below.  Composing with the tautological map $F\to \sS$ on $T^{\sS}$, we obtain a map $F \rightarrow \bN$.  We may then construct a sheaf of monoids $\oM(\fr)$ on~$T^{\sS}$, as above.
\begin{definition}\label{Def:r-twisted-T} Fix a twisting choice $\sS \to \bN$.
Denote by $\cT^{\fr}$ the stack of  objects of $\cT^\sS$ together with  extensions of the map $\oM_\cT \rightarrow \sA$ to a map $\oM_\cT(\fr) \rightarrow \sA$. Similarly, denote by $\fT^{\fr}$ the stack of objects of $\fT^\sS$ together with extensions of the morphism $\oM_\fT \rightarrow \sA$ to a map $\oM_\fT(\fr) \rightarrow \sA$.
\end{definition}

\begin{proposition}
  The maps $\cT^{\fr} \rightarrow \cT^{\sS}$ and $\fT^{\fr} \rightarrow \fT^{\sS}$ are of Deligne--Mumford type and are isomorphisms on dense open substacks of source and target.
\end{proposition}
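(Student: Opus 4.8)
The plan is to recognize both $\cT^{\fr} \to \cT^{\sS}$ and $\fT^{\fr} \to \fT^{\sS}$ as root stacks and to deduce the two assertions from the standard local structure of root stacks. Write $T$ for $\cT$ or $\fT$ and $T^{\sS}$, $T^{\fr}$ for the labelled and twisted analogues, and recall that $F$ (Section~\ref{sec:label-by-set}) is the locus whose connected components index, étale locally, the generators of the characteristic monoid $\oM_T$, with the singular components corresponding to the boundary divisors of $T^{\sS}$ over which the universal expansion acquires a node. By \cite{BV} a logarithmic structure is the datum of a symmetric monoidal functor $\oM_T \to \sA$, and adjoining an $n$-th root to a generator corresponds on the level of stacks to passing to the $n$-th root stack along the associated effective Cartier divisor. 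Thus, by \emph{loc.\ cit.}, the stack $T^{\fr}$ of extensions of $\oM_T \to \sA$ along $\oM_T \hookrightarrow \oM_T(\fr)$ is canonically a root stack over $T^{\sS}$: étale locally, where $\oM_T$ is free, the choice of an extension is the choice, for each singular component $z$ of $F$, of an $\fr(z)$-th root in $\sA$ of the pair $(L_z, s_z)$ that is the image of the corresponding generator, so that $T^{\fr}$ is the iterated root stack along the boundary divisors $\{V(s_z)\}$ with orders $\fr(z)$.

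First I would verify that the morphism is of Deligne--Mumford type. Since we work over a field of characteristic $0$, each $\bmu_n$ is étale, so every root stack $\sqrt[n]{(L,s)} \to S$ is representable by a separated Deligne--Mumford stack, with relative inertia $\bmu_n$. Forming fibre products over $T^{\sS}$ preserves this, so for any scheme mapping to $T^{\sS}$ the base change of $T^{\fr}$ is a Deligne--Mumford stack; equivalently the relative diagonal of $T^{\fr} \to T^{\sS}$ is unramified, its relative inertia being étale locally the finite group $\prod_z \bmu_{\fr(z)}$ over the singular components $z$. Hence $T^{\fr} \to T^{\sS}$ is of Deligne--Mumford type.

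Next I would establish the generic isomorphism. Recall that $\sqrt[n]{(L,s)} \to S$ restricts to an isomorphism over the open locus $S \smallsetminus V(s)$ where the section is nowhere vanishing, introducing $\bmu_n$-stabilizers only over $V(s)$. Roots are adjoined in $\oM_T(\fr)$ solely at components of $F$ corresponding to singularities, which are supported on the boundary divisor of $T^{\sS}$. Let $U \subset T^{\sS}$ be the open substack over which the universal expansion is smooth---for pairs the length-$0$ locus, for degenerations the locus lying over $\sA \smallsetminus \sD$. On $U$ there are no singular components, so $\oM_T(\fr) = \oM_T$ and the extension is unique; hence $T^{\fr} \to T^{\sS}$ is an isomorphism over $U$, and its preimage $U' \subset T^{\fr}$ maps isomorphically onto $U$. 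Finally $U$ is dense: the generic object of $\cT \simeq \TGV$ (resp.\ of $\fT$) is a smooth curve, so the nodal locus is a proper closed substack; by Proposition~\ref{prop:TisT} the same holds for the labelled stacks. Thus $U$ and $U'$ are dense open substacks over which the morphism is an isomorphism.

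The main obstacle I anticipate is the bookkeeping in the first paragraph: pinning down precisely that extending the Borne--Vistoli functor to $\oM_T(\fr)$ coincides with the iterated root stack taken only at the \emph{singular} components of $F$. In particular, in the pair case the distinguished divisor is a component of $F$ but is not a singularity, so it contributes a generator to $\oM_\cT$ yet receives no root; one must check that this component does not disturb the comparison over $U$ (indeed $\oM_\cT(\fr) = \oM_\cT$ there regardless). Once this identification is made, the Deligne--Mumford property and the generic isomorphism follow formally from the known local structure of root stacks.
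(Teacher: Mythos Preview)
Your proposal is correct and follows essentially the same route as the paper's proof: identify the map \'etale locally as a root stack via the Borne--Vistoli description, invoke that root stacks are Deligne--Mumford in characteristic~$0$, and observe that on the dense open locus where $\oM_T$ is trivial (equivalently, where the universal expansion is smooth) no root is adjoined and the map is an isomorphism. The paper's argument is simply a compressed version of yours, and your closing remark about the distinguished divisor in the pair case is a fair clarification of a point the paper leaves implicit.
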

\begin{proof}
  Since $\oM_{\cT}$ is trivial on the dense open point of $\cT$ the map $\cT^{\fr} \rightarrow \cT^{\sS}$ is an isomorphism there.  To see that the map is of Deligne--Mumford type, we can work \'etale locally on an $S$-point of $\cT^{\sS}$ and assume that the logarithmic  structure is free, at which point $\cT^{\fr} \fp_{\cT^{\sS}} S$ becomes a root stack, which is Deligne--Mumford because we work in characteristic 0. The proof for $\fT^\fr$ is identical.
\end{proof}

The pre-image of the divisor that was used to define the logarithmic structure on $\fT^\sS$ is a normal crossings divisor in the universal expansion $\sX^\sS$.  It therefore induces a locally free logarithmic structure on $\sX^\sS$.  \'Etale locally in $\fT$ (i.e., \'etale locally in $S$ for each $S$-point of $\fT$) each generator of $\oM_{\cT}$ maps, away from $F$, to a generator of $\oM_{\sX}$, and to a sum of two generators near $F$. Define $\oM_{\sX}(\fr)$ to be the sheaf of monoids obtained by adjoining an $\fr(z)$-th root to each of these generators. 

For the universal family $\cX^\sS$ over $\cT^\sS$ we apply the same procedure with the logarithmic structure associated to the normal crossings divisor associated to both singular locus and marking.


\begin{definition}\label{Def:r-twisted-X}  Denote by $\sX^{\fr}$ over $\sX$ the moduli stack of extensions of the map $\oM_{\sX} \rightarrow \sA$ to $\oM_{\sX}(\fr) \rightarrow \sA$; and similarly for $\cX^{\fr}$ over $\cX$. We call these the stacks of $\fr$-twisted  expansions. 
\end{definition}

\begin{lemma} \label{lem:tw-fibers}
\begin{enumerate}[leftmargin=0pt,itemindent=3em]
 \item The fibers of $\sX^{\fr}$ over $\fT^{\fr}$ are twisted expansions of $\sA^2$ over $\sA$. The  fibers of $\cX^{\fr}$ over $\cT^{\fr}$ are twisted expansions of $\sA$.
 \item The resulting morphism $\fT^{\fr} \to \fT^\tw\times_\fT \fT^\sS$ or  $\cT^{\fr} \to \cT^\tw\times_\cT \cT^\sS$ is an isomorphism onto the open substack consisting of pairs $({X^\tw}', \phi:F\to \sS)$, with ${X^\tw}'$ a twisted expansion and $ \phi:F\to \sS$ a labelling as before, such that the order of the stabilizer of a point $z$ lying over $\bar z \in F$ is $\fr(\phi(\bar z))$.
 \item When $\sS = \bN$ and $\fr: \bN\to \bN$ is the identity, we have $\fT^{\fr} = \fT^\tw$ and $\cT^{\fr} = \cT^\tw$.
 \end{enumerate}
\end{lemma}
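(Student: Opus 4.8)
The plan is to reduce all three parts to the Borne--Vistoli dictionary \cite{BV} between root extensions of a Deligne--Faltings structure and root stacks, combined with the description of twisted curves as balanced root stacks of prescribed cyclic order \cite{AOV}. The whole statement is really a fiberwise translation of the log-twisting data into orbifold structure, and much of the work consists in unwinding the definition of $\oM_\sX(\fr)$. For part~(1) I would argue as follows. By Definition~\ref{Def:r-twisted-X} the stack $\sX^{\fr}\to\sX$ parametrizes extensions of the map $\oM_\sX \to \sA$ to $\oM_\sX(\fr)\to\sA$, where $\oM_\sX(\fr)$ adjoins an $\fr(z)$-th root to each branch generator of the normal-crossings divisor. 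By the Borne--Vistoli correspondence such an extension is exactly a root-stack datum: away from $F$ one gets the $\fr(z)$-th root stack along the smooth divisor (and, for pairs, along the marked divisor), while near a node $z$, where the generator of $\oM_\fT$ maps to the sum $x+y$ of the two branch generators of $\oM_\sX$, adjoining $\fr(z)$-th roots to both $x$ and $y$ produces the balanced twisted node of order $\fr(z)$. Restricting to a geometric fiber over $\fT^{\fr}$ (resp.\ $\cT^{\fr}$), the fiber of $\sX^{\sS}$ is an untwisted standard expansion, a chain of copies of $P$; the root-stack operation then replaces the $i$-th component by $P_i = P(\sqrt[r_{i-1}]{D_-},\sqrt[r_i]{D_+})$ with $r_i = \fr(\phi(z_i))$, and the balanced node realizes the band-reversing gluing $\phi_i$ of Definition~\ref{Def:twisted-standard}. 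Hence each fiber is a standard $\br$-twisted expansion.

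For part~(2), part~(1) shows that an object of $\fT^{\fr}$ (an $\fr$-extension over a labelled expansion) induces a family of twisted expansions, i.e.\ an object of $\fT^\tw$, while remembering the labelling gives an object of $\fT^\sS$; both lie over the same underlying untwisted expansion in $\fT$, so together they furnish the morphism to $\fT^\tw\times_\fT \fT^\sS$. By construction the twisting order of the resulting expansion at a point over $\bar z\in F$ is $\fr(\phi(\bar z))$, so the morphism lands in the asserted substack. For the inverse I would use that a twisted expansion is fiberwise a chain of root stacks with balanced twisted nodes: by the Borne--Vistoli/Olsson dictionary a twisted structure with cyclic stabilizers of prescribed orders is equivalent to the root extension $\oM_\sX(\fr)\to\sA$ together with its counterpart $\oM_\fT(\fr)\to\sA$ on the base. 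The hypothesis that the stabilizer order at $\bar z$ equal $\fr(\phi(\bar z))$ is precisely what makes the prescribed orders match those coming from $\fr\circ\phi$, so the $\fr$-extension is recovered uniquely, giving a two-sided inverse. That the matching locus is open (in fact open and closed) follows because both the stabilizer order and $\fr\circ\phi$ are locally constant on $F$. The argument for $\cT^{\fr}$ is identical, incorporating the marked divisor.

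Part~(3) is then immediate: when $\sS=\bN$ and $\fr=\id$, a twisted expansion carries a canonical locally constant function $F\to\bN$ recording its stabilizer orders, and the open substack cut out in part~(2) is exactly the graph of this function inside $\fT^\tw\times_\fT \fT^\bN$. The projection to $\fT^\tw$ is therefore an isomorphism onto this graph, yielding $\fT^{\fr}\simeq\fT^\tw$, and the same reasoning gives $\cT^{\fr}\simeq\cT^\tw$.

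I expect the main obstacle to be part~(1): precisely matching the balanced twisted node produced by the abstract Borne--Vistoli root extension with the hand-built band-reversing gluing $\phi_i$ of Definition~\ref{Def:twisted-standard}, including the compatibility of the dual normal bundles and of the root data on the two branches at each node. This identification is local and follows because the two branches already carry dual normal bundles in the untwisted chain and the new root of the smoothing generator factors as the product of the two branch roots; but it is the one point where the orbifold geometry must be checked by hand rather than read off formally. Once this local comparison is established, parts~(2) and~(3) are essentially formal consequences of the equivalence between twisted structures and log-twistings.
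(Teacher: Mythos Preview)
Your proposal is correct and follows essentially the same route as the paper: both arguments identify the extension $\oM\subset\oM(\fr)\to\sA$ with a root-stack/log-twisting datum and invoke the equivalence between such data and balanced twisted orbifold structures (the paper cites Olsson and \cite{AOVmaps}, you cite Borne--Vistoli and \cite{AOV}), and both note that part~(3) follows formally from part~(2). The only difference is a minor packaging choice: the paper first pulls back along $\AA^1\to\sA$ (and $X\to\sA^2$) so that the fibers literally become nodal curves, at which point Olsson's theorem on log-twisted curves can be quoted verbatim; you instead work directly on the universal expansion via the Borne--Vistoli dictionary and verify the balanced-node identification by hand. Either way the content is the same, and your anticipated ``main obstacle'' is exactly what the curve reduction is designed to absorb into a citation.
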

\begin{proof}
We can check the statements after pulling back $\cX^{\fr} \to \sA$ via the map $\AA^1 \to \sA$,  and similarly pulling back $\sX^{\fr} \to \sA^2$ via the map $X \to \sA^2$. Also the third statement follows from the second.

According to \cite{Olsson}, \cite{AOVmaps} there is an equivalence of categories between log twisted curves and twisted curves. Thus the stack obtained from a nodal curve $C/S$, with canonical log structure $\oM_C\to \sA$ over $\oM_S\to \sA$ by imposing a simple extension of the log structure $\oM_C \subset \oM_C' \to \sA$ compatible with $\oM_S\subset \oM_S' \to \sA$,  is a twisted curve, and every twisted curve is canonically obtained in this manner. The order of the stabilizer of a point $z$ corresponding to the $i$-th component of the locally free log structure is precisely the orther of the $i$-th component of the quotient $\oM_C' / \oM_C$.  In our situation the simple extension is $\oM\subset \oM(\fr) \to \sA$.  \Dan{tighten this}
\end{proof}

\subsection{Partial untwisting}

Let $\fr, \fr' : \sS \rightarrow \bZ_{>0}$ be two twisting choices.  We write that $\fr \prec \fr'$, and say that $\fr$ \emph{divides} $\fr'$, if $\fr(s)$ divides $\fr'(s)$ for every $s \in \sS$.  We saw in Definitions \ref{Def:r-twisted-T} and  \ref{Def:r-twisted-T} that $\cT^{\fr}$ can be constructed as a moduli space of enlargements of the log.\ structure on $\cT^{\sS}$ to one with characteristic monoid $M_{\cT}(\fr)$ and that its universal object $\sX^{\fr}$ is the moduli space of extensions of $M_{\sX}(\fr)$ admits a similar description.  If $\fr \prec \fr'$ then there are canonical inclusions $M_{\cT}(\fr) \subset M_{\cT}(\fr')$ and $M_{\sX}(\fr) \subset M_{\sX}(\fr')$ inducing a commutative diagram
\begin{equation*} \xymatrix{
  \sX^{\fr'} \ar[r] \ar[d] & \sX^{\fr} \ar[d] \\
  \cT^{\fr'} \ar[r] & \cT^{\fr} .
} \end{equation*}
Moreover, these maps are compatible with the projections to $\sX^{\sS}$ over $\cT^{\sS}$, so we obtain
\begin{proposition}
  If $\fr, \fr' : \sS \rightarrow \bZ_{>0}$ are twisting choices such that $\fr \prec \fr'$ then there is a map $\cT^{\fr'} \rightarrow \cT^{\fr}$, compatible with the projections to $\cT^{\sS}$, that is an isomorphism on dense open substacks of source and target.  If $\fr \prec \fr' \prec \fr''$, the resulting triangle is commutative in a canonical way.
\end{proposition}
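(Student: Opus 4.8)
The plan is to derive the entire statement from a single canonical inclusion of sheaves of monoids together with the fact that ``restriction of an extension along an inclusion of monoids'' is strictly functorial. Recall from Section~\ref{sec:twisting-choice} that $\oM_\cT(\fr)$ is obtained from $\oM_\cT$ by adjoining, \'etale locally, an $\fr(z)$-th root to the generator attached to each connected component $z$ of the singular locus $F$. First I would produce from $\fr \prec \fr'$ a canonical inclusion $\iota : \oM_\cT(\fr) \hookrightarrow \oM_\cT(\fr')$ of sheaves of monoids over $\oM_\cT$: writing $\fr'(z) = k_z\,\fr(z)$ with $k_z \in \bN$, the chosen $\fr(z)$-th root of a generator $u_z$ is the $k_z$-th power of its $\fr'(z)$-th root, so each generator of $\oM_\cT(\fr)$ lands canonically in $\oM_\cT(\fr')$ and $\iota$ restricts to the identity on $\oM_\cT$. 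The identical recipe yields $\oM_\sX(\fr) \hookrightarrow \oM_\sX(\fr')$ on the universal expansion.

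By Definition~\ref{Def:r-twisted-T}, an object of $\cT^{\fr'}$ over $S$ is an object $t$ of $\cT^\sS$ together with an extension $\psi' : \oM_\cT(\fr') \to \sA$ of the structure map $\oM_\cT \to \sA$. Composing with $\iota$ gives $\psi := \psi' \circ \iota : \oM_\cT(\fr) \to \sA$, which still extends $\oM_\cT \to \sA$ because $\iota$ restricts to the identity on $\oM_\cT$; this defines the partial untwisting map $\cT^{\fr'} \to \cT^\fr$, while composing with $\oM_\sX(\fr) \hookrightarrow \oM_\sX(\fr')$ supplies the compatible map $\sX^{\fr'} \to \sX^\fr$ over it, exactly the diagram preceding the statement. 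Since the underlying $\cT^\sS$-object $t$ is left untouched, the map commutes with the projections to $\cT^\sS$. For the assertion about dense opens I would invoke the previous proposition: over the dense open locus of $\cT$ on which $\oM_\cT$ is trivial, both $\oM_\cT(\fr)$ and $\oM_\cT(\fr')$ are trivial and admit a unique extension, so $\cT^\fr$ and $\cT^{\fr'}$ each restrict there to the corresponding open of $\cT^\sS$; compatibility with the projection to $\cT^\sS$ then forces $\cT^{\fr'} \to \cT^\fr$ to be the identity on this open, hence an isomorphism. The degeneration case $\fT^{\fr'} \to \fT^\fr$ is obtained verbatim.

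For the triangle, given $\fr \prec \fr' \prec \fr''$, I would check that the monoid inclusions compose on the nose. Because $\fr(z) \mid \fr'(z) \mid \fr''(z)$, the adjoined roots nest strictly---the $\fr(z)$-th root is a fixed power of the $\fr'(z)$-th root, which is in turn a fixed power of the $\fr''(z)$-th root---so the composite $\oM_\cT(\fr) \hookrightarrow \oM_\cT(\fr') \hookrightarrow \oM_\cT(\fr'')$ equals the direct inclusion $\oM_\cT(\fr) \hookrightarrow \oM_\cT(\fr'')$ as a map of sheaves of monoids, not merely up to canonical isomorphism. Restriction of extensions is then strictly functorial, so the composite $\cT^{\fr''} \to \cT^{\fr'} \to \cT^\fr$ coincides with $\cT^{\fr''} \to \cT^\fr$, yielding the canonical commutativity. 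The single point that deserves genuine care---and which I regard as the main obstacle---is precisely this strictness: one must verify that the root construction $\oM_\cT(-)$ and the inclusions $\iota$ are defined by honest canonical inclusions of sheaves of monoids, compatibly with the \'etale-local identification of the generators of $\oM_\cT$ with the components of $F$, since otherwise one would obtain only $2$-commutativity in place of the asserted canonical commutativity of the triangle.
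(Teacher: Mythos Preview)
Your proposal is correct and follows essentially the same approach as the paper: the paper's argument, contained in the paragraph immediately preceding the proposition, also constructs the map via the canonical inclusions $\oM_\cT(\fr) \subset \oM_\cT(\fr')$ and $\oM_\sX(\fr) \subset \oM_\sX(\fr')$ and then restricts extensions along them. Your write-up is simply a more detailed elaboration of that argument, including an explicit verification of the dense-open claim (via the previous proposition) and the strict composability of the monoid inclusions for the triangle, both of which the paper leaves implicit.
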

\subsection{Split expansions}
\label{sec:split-expansions}

It is particularly important in degeneration formulas to understand the fiber of $\fT \rightarrow \sA$ over the origin $0\in \AA^1 \to\sA$, and the corresponding families of expanded degenerations.  Let $\cT_0 = 0 \fp_{\sA} \fT$ be this locus, and  let $\cT_0^{\twst} = 0 \fp_{\sA} \fT^{\twst}$ where $\fT^{\twst} \rightarrow \sA$ is the composition of the untwisting map $\fT^{\twst} \rightarrow \fT$ and the projection $\fT \rightarrow \sA$.

Define $\cT_{\spl} = \cT \times \cT$.  If $(\sY_1,\sE_1)$ and $(\sY_2, \sE_2)$ denote two expanded pairs over $S$ then we can attach $\sY_1$ to $\sY_2$ along the isomorphisms $\sE_1 \xrightarrow{\sim} \sD \xleftarrow{\sim} \sE_2$ (cf.\ \cite[Appendix~A]{AGV}).  This gives a map $\Tspl \rightarrow \cT_0$. 

We refer to \cite{Costello} for the notion of maps of pure degree.

\begin{proposition}
  $\Tspl \rightarrow \cT_0$ has pure degree~$1$.
\end{proposition}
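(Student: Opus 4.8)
The plan is to show that the gluing map $f : \Tspl \to \cT_0$ is an isomorphism over a dense open substack of $\cT_0$; by Costello's formalism \cite{Costello} this is precisely what is needed to conclude that $f$ has pure degree~$1$.

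First I would make the source geometric, reusing the description from the discussion of Proposition~\ref{Prop:gluing}. An $S$-point of $\Tspl = \cT \times \cT$ is a pair of expanded pairs $(\sY_1,\sE_1)$, $(\sY_2,\sE_2)$, and attaching them along $\sE_1 \xrightarrow{\sim} \sD \xleftarrow{\sim} \sE_2$ produces an expanded degeneration of $\sA^2/\sA$ over $0$ together with a distinguished node, namely the image of the glued divisors. Conversely, cutting an expanded degeneration at any one of its nodes recovers such a pair. Thus $\Tspl$ is canonically the stack of expanded degenerations over $0$ equipped with a choice of splitting divisor, and $f$ forgets that choice. In particular $f$ is representable and finite: over a degeneration whose fibre is a chain with $\ell$ copies of $P$ there are exactly $\ell+1$ nodes, so $f^{-1}$ of that point is a finite set of $\ell+1$ points.

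Next I would check that $f$ preserves automorphisms, so that it is unramified and introduces no gerbe. Splitting a length-$\ell$ chain at a node separates its $\ell$ copies of $P$ into $a$ on the $\sY_1$ side and $b=\ell-a$ on the $\sY_2$ side; by Lemma~\ref{lem:accordion-auts} the two resulting expanded pairs have automorphism groups $\Gm^a$ and $\Gm^b$, whose product $\Gm^{a+b}=\Gm^\ell$ is exactly the automorphism group of the original expanded degeneration. Hence the automorphism group of each object upstairs maps isomorphically onto that of its image, in contrast to the twisted situation of Proposition~\ref{Prop:gluing-tw}, where lifting the normal-bundle isomorphism to $\fr$-th roots produces a $\bmu_\fr$-gerbe.

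Finally I would pass to the generic point. Both $\Tspl=\cT\times\cT$ and $\cT_0$ are irreducible: the generic point of $\cT$ is the trivial (length~$0$) expansion, so the generic point of $\Tspl$ is the pair of trivial expansions, while within $\cT_0$ the minimal expansion $Y_1 \sqcup_D Y_2$ is the dense open stratum (one may smooth all but one node of any deeper chain while staying over $0$, as is visible in Li's models $\AA^{\ell+1}\to\AA^1$, whose fibre over $0$ has dense open locus where exactly one coordinate vanishes). This minimal expansion has a single node, hence a single splitting divisor, so over the length-$0$ locus $\cT_0^\circ\subset\cT_0$ the fibre of $f$ is a single point with matching automorphism group; that is, $f$ restricts to an isomorphism $\Tspl\times_{\cT_0}\cT_0^\circ \xrightarrow{\sim} \cT_0^\circ$. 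Since $\cT_0^\circ$ is dense and open in the irreducible stack $\cT_0$ and $f$ is proper, $f_*[\Tspl]=[\cT_0]$, so $f$ has pure degree~$1$. The hard part is the verification in this last paragraph that the minimal expansion really is the generic point of $\cT_0$ and that the automorphism groups match exactly there, ruling out any hidden automorphism from the stacky origin $0=\cB\Gm\subset\sA$ or any failure of uniqueness of the splitting divisor; this is controlled entirely by the automorphism computation of Lemma~\ref{lem:accordion-auts} and by the local structure of the origin fibre.
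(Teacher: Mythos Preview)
Your proposal is correct and follows essentially the same approach as the paper: identify a dense open substack of $\cT_0$ over which the map is an isomorphism. The paper's proof is a single sentence to this effect, naming the open substack as the locus where the singular locus of the universal expansion has one connected component. Your additional checks (representability and finiteness of $f$, the automorphism-group match for arbitrary length $\ell$) are correct but not needed for the degree-$1$ conclusion; at the generic point of $\cT_0$ both sides have trivial automorphism group, so the isomorphism there is immediate.
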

\begin{proof}
The morphism is an isomorphism above the dense open substack  of $\cT_0$ where the fibers of the singular locus of the universal expansion have one irreducible component. \end{proof}

The proposition above is sufficient to understand splitting in the untwisted case.  The twisted case is less straightforward.  Instead of a direct map to relate $\cTtw_{\spl}$ to $\cTtw_0$, we have a correspondence
\begin{equation*}
  \xymatrix{
    & \cTtw_{\spl} \ar[dl] \ar[dr] \\
    \cT^{\rm tw} \fp_{\bZ_{>0}} \cT^{\rm tw} & & \cT_{\spl} \fp_{\cT_0} \cTtw_0 \ar[r]^<>(0.5){\pi} & \cTtw_0 .
  }
\end{equation*}
(All of the stacks in the diagram will be defined below.)  The map $\pi$ is obtained from base change from $\cT_{\spl} \rightarrow \cT_0$ so is of pure degree $1$.  We will see below that both of the other two maps are of the same pure degree $\frac{1}{\br}$, which permits one, using Costello's theorem \cite[Theorem~5.0.1]{Costello} or Manolache's refinement \cite[Proposition~2]{Manolache}, to translate directly between one stack and the other in questions of virtual enumerative geometry \cite[Section~5]{AF}.

Let $\Ttwspl$ be the stack whose $S$-points are pairs of twisted expanded pairs $(\sA'_1, \sD'_1)$ and $(\sD'_2, \sD'_2)$, together with a band-inverting isomorphism of $\Gm$-gerbes $\sD'_1 \rightarrow \sD'_2$ and a commutative diagram
\begin{gather*}
  \xymatrix{
    \sD'_1 \ar[rr] \ar[dr] & & \sD'_2 \ar[dl] \\
    & \sD
  }
\end{gather*}
where the maps $\sD'_i \rightarrow \sD$ are those induced by untwisting.  Note that there is no such isomorphism if the twisting of the distinguished divisors $\sD'_1$ and $\sD'_2$ are different.  Locally in $S$ such an isomorphism always exists and the $2$-automorphism group of such an isomorphism is canonically isomorphic to $\mu_r$.  This proves the first half of the following.

\begin{proposition}
  Let $\br$ be the locally constant function recording the order of twisting at the splitting divisor.
  \begin{enumerate}
  \item   The forgetful map $\cTtw_{\spl} \rightarrow \cT^{\rm tw} \fp_{\bZ_{>0}} \cT^{\rm tw}$ makes $\cTtw_{\spl}$ into a gerbe banded by $\mu_{\br}$ over $\cT^{\rm tw} \fp_{\bZ_{>0}} \cT^{\rm tw}$.
  \item   The map $\cTtw_{\spl} \rightarrow \cTtw_0 \fp_{\cT_0} \cT_{\spl}$ has pure degree $\frac{1}{\br}$.
  \end{enumerate}
\end{proposition}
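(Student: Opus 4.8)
The plan is to prove the two parts by separate arguments, in both cases reducing to the generic stratum and invoking Costello's description of pure degree through generic fibres; the whole computation is controlled by the group $\mu_\br$ of $2$-automorphisms of a band-inverting isomorphism of $\Gm$-gerbes that was isolated in the discussion preceding the statement.

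Part~(1) is essentially contained in that discussion. I would record that, \'etale locally on $\cTtw \fp_{\bZ_{>0}} \cTtw$, a band-inverting isomorphism $\sD'_1 \to \sD'_2$ exists (the two distinguished divisors are $\Gm$-gerbes over $\sD$ with dual bands, so the obstruction to such an isomorphism is a class that vanishes locally), that any two such isomorphisms over a common point become isomorphic after an \'etale cover, and that the $2$-automorphisms of a fixed one form a group canonically isomorphic to $\mu_\br$. These three facts are exactly the axioms exhibiting the forgetful map $\cTtw_\spl \to \cTtw \fp_{\bZ_{>0}} \cTtw$ as a gerbe banded by $\mu_\br$; its pure degree is the standard contribution $1/\br$ of a $\mu_\br$-gerbe.

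For part~(2), write $\mathcal Z = \cTtw_0 \fp_{\cT_0} \cT_\spl$. I would first observe that the projection $\pi \colon \mathcal Z \to \cTtw_0$ is the base change, along the untwisting map $\cTtw_0 \to \cT_0$, of the gluing map $\cT_\spl \to \cT_0$; since the latter has pure degree $1$ by the previous proposition and pure degree is stable under base change (as used for $\pi$ above), $\pi$ has pure degree $1$, and the composite $\pi \circ q$ reproduces the twisted gluing map of the introduction. It therefore suffices to analyse $q \colon \cTtw_\spl \to \mathcal Z$ on the generic stratum. By Lemma~\ref{lem:tw-fibers} the assertion may be checked after pulling back along $\AA^1 \to \sA$, so I would take a geometric point $z = (\sX^\tw, \sA'_1, \sA'_2)$ of $\mathcal Z$ whose expansion has a single splitting node, of twisting order $\br = r$, and length-$0$ branches.

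The point is then to identify the fibre $q^{-1}(z)$. Over such $z$ the two twisted pairs are forced to be the order-$r$ root stacks of $\sA'_1$ and $\sA'_2$, and the only datum left is a band-inverting isomorphism $\phi \colon \sD'_1 \to \sD'_2$ of $\Gm$-gerbes realizing $\sX^\tw$ as the gluing; as in part~(1) such a $\phi$ is unique up to isomorphism, and its group of $2$-automorphisms---equivalently, the group of $r$-th roots of unity lifting the identity of the normal-bundle identification \eqref{Eq:normal-isom} to the $r$-th roots of $N_{D/Y_1}$ and $N_{D/Y_2}$---is $\mu_r$. Hence $q^{-1}(z) \simeq \cB\mu_r$, so $q$ is generically a gerbe banded by $\mu_\br$ and has pure degree $1/\br$; this is consistent with $\pi$ having pure degree $1$ and $\pi \circ q$ having pure degree $1/\br$. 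I expect the identification of $q^{-1}(z)$ with $\cB\mu_\br$ to be the main obstacle: one must treat the distinguished divisors as $\Gm$-gerbes and read off the $\mu_\br$ as the $2$-automorphisms of the gluing isomorphism (equivalently as the ambiguity in extracting $r$-th roots in \eqref{Eq:normal-isom}), since it is precisely this that separates pure degree $1/\br$ from the incorrect answers $\br$ or $1$ that a naive point-count of lifts would suggest.
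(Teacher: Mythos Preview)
Your treatment of part~(1) is fine and matches the paper's: the discussion preceding the proposition already records local existence, local transitivity, and the identification of the $2$-automorphism group with $\mu_r$, which are precisely the gerbe axioms.

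Part~(2), however, has a genuine gap. The map $q\colon \cTtw_\spl \to \cTtw_0 \fp_{\cT_0} \cT_\spl$ is \emph{not} generically a gerbe; as the paper warns in the introduction, it ``is not a gerbe but rather more like the embedding of a scheme in an $(r{-}1)$-st infinitesimal neighborhood.'' The paper's proof makes this explicit. Over the open locus $\cU \subset \fT$ of expansions with at most one node one has $\cU \cong \sA$, and on the $r$-th component the untwisting map $\cU^\tw \to \cU$ is the $r$-th power map $\sA \to \sA$. Since $\cU_\spl = \cU_0$, the target becomes
\[
\cU^\tw_0 \;\cong\; [\,\Spec(\bC[t]/t^r)\,/\,\mu_r\,],
\]
which is \emph{non-reduced}, while the source is $\cU^\tw_\spl \cong B\mu_r = [\,\Spec(\bC[t]/t)\,/\,\mu_r\,]$. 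The map $q$ is the closed immersion $t=0$, and the pure degree $1/r$ comes from the ratio of lengths, not from extra stabilizers.

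Your fibre computation misfires for exactly this reason. The $\mu_r$ you identify as $2$-automorphisms of the gluing isomorphism $\phi$ is already the automorphism group of the point $z$ in the target: on this locus $q$ induces an isomorphism $\mu_r \to \mu_r$ on inertia, so the fibre over a geometric point is a single reduced point, not $B\mu_r$. Geometric fibres simply cannot see the nilpotent thickening of $\cU^\tw_0$; invoking Costello via ``generic fibres'' requires working with the generic point and its scheme structure, not merely with geometric points.
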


For the second claim, it is enough to consider a dense open substack of $\cTtw_0 \fp_{\cT_0} \cT_{\spl}$.  Let $\cU \subset \cT$ be the open substack of $\fT$ over which the corresponding expansion of $\AA^2 \rightarrow \AA$ has at most one node in the fibers;  define $\cU^{\twst}$ to be the pullback of $\cU$ under the untwisting map, $\cU_0$ its intersection with $\cT_0$, etc.  Note that $\cU_{\spl} = \cU_0$, so $\cU^{\rm tw}_0 \fp_{\cU_0} \cU_{\spl} = \cU^{\rm tw}_0$.

Now, $\cU \cong \sA$ and $\cU^{\rm tw}$ is isomorphic to the union of infinitely many copies of $\sA$, indexed by $\bN$, and joined along their open points.  If the open substack labelled by $r$ is identified with $\sA$ then the untwisting map $\cU^{\rm tw} \rightarrow \cU$ is identified with the $r$-th power map.  Thus
\begin{gather*}
  \cU^{\rm tw}_0 \cong \coprod_{r \in \bN} (\sA, [r]) \fp_{\sA} 0  \cong \coprod_{r \in \bN} \Big[ \big( \Spec \bC[t] / t^r \big) \bigg/ \mu_r \Big]
\end{gather*}
The notation in the second term is meant to indicate the fiber product of $0 \rightarrow \sA$ and the $r$-th power map $[r] : \sA \rightarrow \sA$.  On the other hand,
\begin{gather*}
  \cU^{\rm tw}_{\spl} \cong \coprod_{r \in \bN} B \mu_r \cong \coprod_{r \in \bN} \Big[ \big( \Spec \bC[t] / t \big) \bigg/ \mu_r \Big]
\end{gather*}
That is, on the $r$-th component, $\cU^{\rm tw}_0$ is given by the equation $t^r = 0$ and $\cU^{\rm tw}_{\spl}$ is given by $t = 0$.  This proves the proposition. \qed

\section{Other incarnations of the stacks of expansions}\label{Sec:otherdef}

The stack $\cT$, its sister $\fT$ and their twisted cousins were described in terms of the stack $\sA$. Since the stack $\sA$ itself has modular interpretations, we can in turn interpret the various stacks of expansions in corresponding modular terms. 

In Section \ref{Sec:configs} we use the fact that $\sA$ parametrize line bundles with sections to interpret  $\cT$ et al.\ as a stacks of {\em configurations of line bundles}. The stack $\sA$ also serves as the stack of rank-1 Deligne--Faltings logarithmic structures, an open substack of Olsson's stack $LOG$ of all logarithmic structures. 

In Section \ref{Sec:aligned} this is used to interpret $\cT$ as a stack of {\em aligned logarithmic structures}. The connection is made through the work of Borne and Vistoli relating logarithmic structures and configurations of line bundles. 

Finally, the stacks of configurations of line bundles have a readily available \'etale covering by stacks of the form $\sA^n$, giving an immediate concrete construction of $\cT$ in terms of a categorical colimit of stacks, see section \ref{Sec:colimits}.

\subsection{Aligned logarithmic structures}\label{Sec:aligned}

It is convenient to start with the logarithmic picture.

\begin{definition}
An \emph{aligned logarithmic structure} on a scheme $S$ is a locally free logarithmic structure $M \rightarrow \cO_S$, together with a subset $\oM_0 \subset \oM$ such that on each fiber the generators of $\oM$ can be labelled $\set{e_1, \ldots, e_n}$ so that the corresponding fiber of $\oM_0$ consists of the elements
\begin{equation*}
0, e_1, e_1 + e_2, e_1 + e_2 + e_3, \ldots, e_1 + \cdots + e_n .
\end{equation*}
Let $\Tlog$ be the moduli stack of aligned logarithmic structures.

We similarly define a twisted version: let $\Tlog^\tw$ be the moduli stack of aligned logarithmic structures along with a simple extension $M\hookrightarrow N$.
\end{definition}

We construct an aligned logarithmic structure on $\Tpairs$.  Let $\sX \rightarrow \Tpairs$ be the universal expansion of the pair $(\PP^1, 0)$.  Then $\sX$ is a family of pre-stable rational curves, hence gives rise to a logarithmic structure $M$ on $\Tpairs$.  Let $\oM$ be the characteristic monoid of $M$.  The generators in a fiber of $\oM$ over $\Tpairs$ are in bijection with the nodes in the corresponding fiber of $\sX$.  In particular, they are totally ordered by their positions on the chain of rational curves.  

Let $\oM_0$ be the sheaf on $\Tpairs$ whose sections are subsets of $M$ satisfying the following two conditions:\Dan{I do not understand this}
\begin{enumerate}[label=(\roman{*})]
\item if $\sigma$ is a section of $\oM_0$ and $f \in \sigma$ then $f$ restricts to a generator in each fiber of $\oM$, and
\item if $f$ is an element of a section $\sigma$ of $\oM_0$ and $e$ is a section of $\oM$ that restricts to a generator in the fibers with $e < f$ in every fiber, then $e \in \sigma$.
\end{enumerate}
There is an embedding $\oM_0 \rightarrow \oM$ sending a subset $\sigma$ of $\oM$ to $\sum_{f \in \sigma} f$.  One can verify easily by looking at the fibers that this map is injective and $\oM_0 \subset \oM$ is an aligned logarithmic structure.

\begin{proposition}
The map $\cT \rightarrow \Tlog$ described by the construction above is an isomorphism. Similarly $\cT^\tw \rightarrow \Tlog^\tw\times \ZZ_{>0}$, where the second factor records the twisting along $D$, is an isomorphism.  
\end{proposition}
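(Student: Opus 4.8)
The plan is to produce a quasi-inverse $\Tlog \to \cT$ and check that the two composites are canonically the identity. Throughout I will use the Borne--Vistoli dictionary \cite{BV} identifying a locally free logarithmic structure of rank $n$ with a configuration of $n$ line bundles with sections, i.e.\ (\'etale locally, once the generators are ordered) a morphism to $\sA^n$, together with the \'etale charts $\sA^n \to \cT$ constructed in the proof of Theorem~\ref{Th:main-T}. The identification $\cT \simeq \TGV$ will let me pass freely between an expanded pair and its underlying chain of rational curves.

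To build the inverse, start from an aligned logarithmic structure $(M,\oM_0)$ on $S$. \'Etale locally on $S$ the alignment condition lets me label the generators of $\oM$ as $e_1,\ldots,e_n$ so that the sections of $\oM_0$ are exactly the partial sums $0, e_1, e_1+e_2,\ldots$; in particular the generators acquire a canonical total order. By \cite{BV} the logarithmic structure $M$ is then a morphism $S \to \sA^n$, and composing with the \'etale chart $\sA^n \to \cT$ yields an expanded pair over $S$, which I take as the image of $(M,\oM_0)$. The role of the alignment is precisely to rigidify the symmetric-group ambiguity inherent in the Borne--Vistoli presentation: a bare rank-$n$ logarithmic structure only gives a map to $[\sA^n/S_n]$, whereas the total order supplied by $\oM_0$ pins the map down to $\sA^n$, hence determines the linear order in which the components of the chain are glued. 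The \'etale-local expansions so obtained glue to a global object because $\oM_0$ is a globally defined subsheaf and the transition maps, which only permute generators, must preserve $\oM_0$ and hence the order.

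For full faithfulness I compare automorphisms. By Lemma~\ref{lem:accordion-auts}, an isomorphism of length-$n$ expanded pairs over an algebraically closed field is governed by $\Gm^n$, one factor per copy of $P$; on the logarithmic side the automorphisms of the associated configuration of line bundles with sections are likewise $\Gm^n$ by \cite{BV}, and each such automorphism automatically preserves $\oM_0$ since the latter is determined by the canonical characteristic monoid. The functor $\cT \to \Tlog$ induces the evident isomorphism of these groups, so, with the inverse of the previous paragraph supplying essential surjectivity, it is an equivalence. The twisted statement follows by decorating this argument: a simple extension $M \hookrightarrow N$ is, by the log-twisted-curve/twisted-curve correspondence of \cite{Olsson} already used in Lemma~\ref{lem:tw-fibers}, the same datum as a twisting order at each node, which is exactly what distinguishes $\cTtw$ from $\cT$ along the chain; the twisting along the distinguished divisor $D$ is not seen by the logarithmic structure of the curve and therefore appears as the separate factor $\ZZ_{>0}$, matching the splitting $\cTtw \simeq \fTtw \times \ZZ_{>0}$ of Proposition~\ref{prop:TisT}. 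Running the previous two paragraphs with $N$ in place of $M$ gives the isomorphism $\cTtw \to \Tlog^\tw \times \ZZ_{>0}$.

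The step I expect to be the main obstacle is the gluing in the second paragraph: one must verify that conditions (i)--(ii) defining $\oM_0$ encode exactly the total order of the nodes on the chain, so that the \'etale-local morphisms to $\sA^n$ assemble into a single global expanded pair with no residual permutation ambiguity, and that no aligned logarithmic structure arises that fails to come from a chain. Everything else is the bookkeeping of checking that the two constructions are mutually inverse and compatible with base change.
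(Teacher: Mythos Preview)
Your approach is correct and is a genuinely different route from the paper's own sketch. The paper argues indirectly: it checks that the map $\cT \to \Tlog$ is a bijection on isomorphism classes of geometric points (both sides indexed by the length $n$), then that automorphism groups agree (both are $\Gm^n$, the paper noting that an aligned free log structure of rank $n$ has automorphism group $\Gm^n$ once the generators are pinned down), and finally asserts that the infinitesimal deformation spaces match, so the map is \'etale and hence an isomorphism. The twisted case is handled by invoking Olsson's log-twisted-curve dictionary, exactly as you do.

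Your argument is more constructive: you build the inverse explicitly by reading an aligned logarithmic structure as a morphism to $\sA^n$ via Borne--Vistoli and composing with the \'etale chart $\sA^n \to \cT$. This is essentially the composite $\Tlog \to \Ttoset \to \cT$ that the paper assembles in the two sections immediately following this proposition, so you are in effect anticipating that material and folding it into the proof. The gluing worry you flag is real but is precisely what the colimit description $\cT \simeq \varinjlim_n \sA^n$ (proved later in the paper) handles; the alignment $\oM_0$ fixes the order of generators so that the transition maps among the local $\sA^n$-charts are the canonical open embeddings, and descent goes through. One modest advantage of your route is that it does not rely on knowing $\Tlog$ is algebraic in advance, whereas the paper's ``\'etale and bijective on geometric points implies isomorphism'' step tacitly uses this; on the other hand the paper's argument is shorter once one accepts the deformation comparison.
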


\begin{proof}[Sketch of proof] It is evident that the functor induces a bijection on isomorphism classes of geometric objects: for $\cT$ these are determined by the number of nodes and for $\Tlog$ by the cardinality of $\oM_0$.  Considering automorphisms, the functor in fact gives an equivalence on geometric points.  Indeed the automorphism group of an accordion of length $n$ is $\Gm^n$, and the same holds for a free log structure with fixed basis of the characteristic.  It is also easy to see that the infinitesimal deformation spaces agree so the morphism is \'etale, giving the result. The twisted case follows from Olsson's theory of log-twisted curves.
\end{proof}

One can similarly define $\fT_{\log}$ and $\fT_{\log}^\tw$ with isomorphisms $\fT \to \fT_{\log}$ and $\fT^\tw \to \fT_{\log}^\tw$.

\subsection{Configurations of line bundles}\label{Sec:configs}

It is possible to remove the sheaf of monoids in the description sketched above and retain only the sheaf  $\oM_0$.  We say that a sheaf $E$ on the \'etale site of a scheme is a \emph{sheaf of totally ordered finite sets}\Dan{Should this be in the previous section describing the nature of $\oM_o$?} if $E$ is a sheaf of partially ordered sets and
\begin{enumerate}[label=(\roman{*})]
\item $E$ is constructible, and
\item any two sections of $E$ are locally comparable.
\end{enumerate}
There is a stack associated to $E$ whose objects are the sections of $E$.  If $x$ and $y$ are two sections of $E$, there is a unique morphism from $x$ to $y$ if and only if $x \geq y$.

Let $\sL$ be the category of line bundles; its fibers $\sL(S)$ are denoted $\mathfrak{Pic}\: S$ in \cite{BV}.  An $S$-point of $\sL$ is a line bundle on $\sL$, and arrows are arbitrary arrows of line bundles, which may vanish.  Note that $\sL$  differs from $\BGm$ and is not a stack in groupoids: a morphism between line bundles is induced from a morphism of their underlying $\Gm$-torsors if and only if it is an isomorphism.

\begin{definition}
Let $\Ttoset$ be the stack whose objects are pairs $(E, L)$ where $E$ is a sheaf of totally ordered finite sets, and $L : E \rightarrow \sL$ is a morphism of stacks such that if $x \geq y$ are sections of $E$ and $L(x) \rightarrow L(y)$ is an isomorphism then $x = y$.  We refer to $\Ttoset$ as the stack of \emph{totally ordered configurations of line bundles}.
\end{definition}

Somewhat heuristically, we can describe an object of $\Ttoset$ as a diagram of line bundles $\cL_n \to \cL_{n-1}\to \cdots \to\cL_1 \to \cO$, with the agreement that, when $\cL_i \to \cL_{i-1}$ is an isomorphism, the diagram is equivalent to the one with $\cL_{i-1} $ removed and the arrow $\cL_i \to \cL_{i-1}$ is the composition of  $\cL_i \to \cL_i\to \cL_{i-1}$.\Jonathan{this isn't very accurate}

If $M$ is a logarithmic structure and $\oM_0$ is an alignment of $M$ then for each section $x$ of $\oM$, the fiber of $M$ over $x$ is a $\Gm$-torsor $P(x)$, equipped with a $\Gm$-equivariant morphism to $\cO = P(0)$.  For each $x \in M$, let $L(x)$ be the line bundle associated to the $\Gm$-torsor $P(x)$.  If $x \geq y$ then there is some $z \in \oM$ such that that $x = y + z$.  We get a map $L(z) \rightarrow \cO$ and this induces a map
\begin{equation*}
L(x) \simeq L(y) \tensor L(z) \rightarrow L(y) \tensor \cO \simeq L(y) .
\end{equation*}
These maps fit together to give a morphism of stacks $L : \oM_0 \rightarrow \sL$.  This construction defines a morphism $\Tlog \rightarrow \Ttoset$.

\begin{proposition}
The map $\Tlog \rightarrow \Ttoset$ described above is an equivalence.
\end{proposition}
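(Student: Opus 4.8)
The plan is to exhibit a quasi-inverse $\Psi:\Ttoset\to\Tlog$ to the functor $\Phi:\Tlog\to\Ttoset$ constructed above, and to produce natural isomorphisms identifying both composites with the respective identities; since $\Tlog$ and $\Ttoset$ are fibered in groupoids over schemes and every construction below is manifestly compatible with pullback, this yields an equivalence of stacks. Throughout I use the dictionary of Borne--Vistoli: a locally free logarithmic structure on $S$ is the same datum as a sheaf of free sharp monoids $\oM$ together with a symmetric monoidal functor $\lambda:\oM\to\sA$, where $\sA(S)=\mathfrak{Div}(S)$ is the groupoid of pairs $(L,s)$ with $s:L\to\cO_S$, and the characteristic of the associated log structure is $\oM$ precisely when no generator of $\oM$ is sent to an invertible section.

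To define $\Psi$, let $(E,L)$ be an object of $\Ttoset(S)$; as the heuristic chain $\cL_n\to\cdots\to\cL_1\to\cO$ indicates, I read the data so that the (necessarily unique) least section $0$ of $E$ satisfies $L(0)=\cO$. \'Etale locally $E=\{0=x_0<x_1<\cdots<x_n\}$, with the generating relations given by the consecutive pairs $x_{k-1}<x_k$. For each such pair, form the object $(\cL_k,s_k)\in\sA(S)$ with $\cL_k:=L(x_k)\otimes L(x_{k-1})^{-1}$ and $s_k:\cL_k\to\cO$ the arrow obtained from the transition map $L(x_k)\to L(x_{k-1})$ upon tensoring with $L(x_{k-1})^{-1}$. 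Because any two sections of $E$ are locally comparable and $E$ is constructible, these successive quotients are canonical and glue to a sheaf of free monoids $\oM$ (freely generated by the consecutive relations) equipped with a symmetric monoidal functor $\lambda:\oM\to\sA$ carrying a generator to its successive quotient; the image $\oM_0\simeq E$ of the partial sums is an alignment. The non-degeneracy hypothesis---that $L(x)\to L(y)$ is an isomorphism only when $x=y$---says precisely that each $s_k$, and more generally the section attached to any nonzero element of $\oM$, is non-invertible, so by Borne--Vistoli $\oM$ is the characteristic of the resulting locally free log structure. Thus $\Psi(E,L)=(\oM,\oM_0)\in\Tlog(S)$, functorially in $S$.

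I would then verify the two round-trips. Starting from $(M,\oM_0)\in\Tlog$, the functor $\Phi$ records $\lambda$ on the partial sums, and $\Psi$ takes successive quotients $L(f_k)\otimes L(f_{k-1})^{-1}$, recovering exactly $\lambda(e_k)$ together with its section from the induced transition map; hence $\Psi\circ\Phi\cong\id_{\Tlog}$ canonically. Conversely, starting from $(E,L)$, the composite assigns to $x_k$ the line bundle $\bigotimes_{j\le k}\big(L(x_j)\otimes L(x_{j-1})^{-1}\big)=L(x_k)\otimes L(x_0)^{-1}=L(x_k)$, using $L(x_0)=\cO$, with transition maps the original ones, so $\Phi\circ\Psi\cong\id_{\Ttoset}$. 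Both natural isomorphisms respect base change, so $\Phi$ is an equivalence of stacks; in particular one need not check full faithfulness and essential surjectivity separately.

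The step I expect to require the most care is the globalization inside the construction of $\Psi$: a priori the successive-quotient recipe is written on an \'etale chart where $E$ is a literal chain $\{x_0<\cdots<x_n\}$, and one must confirm that the generating relations, the free monoid $\oM$ they generate, and the monoidal functor $\lambda$ are independent of the chart and descend to an honest sheaf of monoids with a symmetric monoidal functor to $\sA$ over all of $S$. This is exactly where constructibility of $E$ and local comparability of its sections are used, together with the Borne--Vistoli equivalence to certify that the resulting datum is a genuine locally free logarithmic structure rather than a merely formal collection of line bundles; and the normalization $L(0)=\cO$ is what makes the two round-trips strictly the identity rather than identities only up to a global twist by $L(x_0)$.
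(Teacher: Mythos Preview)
Your proposal is correct and follows the same route as the paper. The paper's own proof is a single sentence: it says this is an immediate application of Borne and Vistoli's equivalence between logarithmic structures and Deligne--Faltings data (line bundles with sections indexed by a monoid). You invoke the same dictionary, but you actually spell out the quasi-inverse $\Psi$ (successive quotients of the chain $L(x_k)\otimes L(x_{k-1})^{-1}$ generating a free monoid mapping to $\sA$) and verify both round-trips, so your argument is more detailed than what the paper offers.

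One small caveat worth flagging: you assume that $E$ has a globally least section $0$ with $L(0)=\cO$. The paper's formal definition of $\Ttoset$ does not state this explicitly, but it is implicit from the description via the \'etale charts $\sA^n\to\Ttoset$ (where the chain always terminates in $\cO$) and from the image of $\Tlog$ (where $0\in\oM_0$ always sits at the bottom). So this is a harmless normalization, not a gap; but if you want the argument to be fully self-contained you should either note that the $\sA^n$-charts force this, or remark that tensoring by $L(x_0)^{-1}$ reduces to that case.
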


This is an immediate application of Borne and Vstoli's interpretation of logarithmic structures in terms of line bundles, given in \cite{BV}.

\Dan{I want here a quick description of the universal family}

\Dan{What's the right thing for twisted??}


\subsection{Colimits}\label{Sec:colimits}

On $\sA^n$ there is a natural sequence of homomorphisms of line bundles.  Let $(L_1, s_1), \ldots, (L_n, s_n)$ be the ``coordinates'' on $\sA^n$, viewing $\sA$ as the moduli space of line bundles with sections.  Then we have a sequence of homomorphisms,
\begin{equation*}
L_1^\vee \tensor \cdots \tensor L_n^\vee \xrightarrow{s_n} L_1^\vee \tensor \cdots \tensor L_{n-1}^\vee \xrightarrow{s_{n-1}} \cdots \xrightarrow{s_3} L_1^\vee \tensor L_2^\vee \xrightarrow{s_2} L_1^\vee \xrightarrow{s_1} \cO .
\end{equation*}
Let $[n]$ denote the totally ordered set $\set{0 \leq 1 \leq \cdots \leq n}$ and let $[n]_{\sA^n}$ be the constant sheaf of totally ordered sets on $\sA^n$ associated to $[n]$.  The sequence above defines a map $L : [n]_{\sA^n} \rightarrow \sL$.  Define $E$ to be the quotient of $[n]_{\sA^n}$ by the relation $i \sim j$ if $L(i) \rightarrow L(j)$ is an isomorphism.  This gives a map $\sA^n \rightarrow \Ttoset$.

If $u : [n] \rightarrow [m]$ is a morphism of totally ordered sets it induces a morphism $\sA^u : \sA^n \rightarrow \sA^m$.  If $(L_i, s_i)$ are the coordinates on $\sA^m$ then ${\sA^u}^\ast (L_i, s_i) = \bigotimes_{u(j) = i} (L_j, s_j)$.  If $u$ is an injection then $\sA^u$ is an open embedding and it is not hard to check that the diagram 
\begin{equation*} \xymatrix{
\sA^n \ar[rr] \ar[dr] & & \sA^m \ar[dl] \\
& \Ttoset
} \end{equation*}
is commutative.

\begin{proposition}
The maps $\sA^n \rightarrow \Ttoset$ are \'etale and induce an equivalence
\begin{equation*}
\varinjlim_n \sA^n \xrightarrow{\sim} \Ttoset
\end{equation*}
where the colimit is taken in the $2$-category of stacks.
\end{proposition}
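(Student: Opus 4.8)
The plan is to construct the canonical comparison functor from the universal property of the colimit and then verify it is an isomorphism by showing it is étale, a monomorphism, and surjective. First I would record that the maps $\sA^n \to \Ttoset$ constructed above, together with the commuting triangles attached to each injective monotone map $u$, assemble into a cocone over the indexing category $\Lambda$ of finite totally ordered sets with injective order-preserving maps; since the transition maps $\sA^u$ are open immersions, this cocone induces a canonical map $c \colon \varinjlim_n \sA^n \to \Ttoset$. The whole proof then reduces to three properties of $c$: that it is étale, a monomorphism, and surjective. Indeed a flat monomorphism locally of finite presentation is an open immersion, and a surjective open immersion is an isomorphism, so these three properties together finish the argument (the source being legitimately a stack with étale atlas $\coprod_n \sA^n$ because the transition maps are open immersions).

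For étale-ness it suffices to treat each $\sA^n \to \Ttoset$ separately. Here I would exploit the equivalences $\Ttoset \simeq \Tlog \simeq \cT$ already established and compare $\sA^n \to \Ttoset$ with the étale chart $\sA^n = [\AA^n / \Gm^n] \to \cT$ produced in the proof of the Comparison Theorem: under these identifications the configuration of line bundles attached to a point of $\sA^n$ is exactly the alignment of normal bundles of the corresponding chain of rational curves, and the origin of $\AA^n$ versally smooths the $n$ nodes. Alternatively one can argue directly by the infinitesimal criterion, the key inputs being that the automorphism group of a length-$\ell$ configuration is $\Gm^\ell$ --- the automorphisms being monoidal, so that the bottom bundle $\cO$ stays rigid and scalings are determined by their values on the $\ell$ generators --- and that this matches the stabilizer $\Gm^\ell$ of the corresponding point of $\sA^n$, at which $n-\ell$ of the coordinate sections are invertible. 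Either way $\sA^n \to \Ttoset$ is étale of relative dimension $0$, hence so is $c$.

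Surjectivity is the easy half of the remaining work: given an object $(E,L)$ over $S$, pass to an étale cover on which $E$ becomes the constant sheaf $[k]$; then the chain $L(k) \to \cdots \to L(0)$ with $L(0) = \cO$ presents $(E,L)$ as the image of the evident $S$-point of $\sA^k$, so $\coprod_n \sA^n \to \Ttoset$ is surjective. The main obstacle is the monomorphism property, which is where the combinatorics of the colimit enters. Here I would control isomorphisms in $\Ttoset$ directly: an isomorphism of configurations is forced to preserve the total order on $E$ and to commute with the (generically non-invertible) line-bundle arrows, so the only identifications between a point of $\sA^n$ and a point of $\sA^m$ are the tautological ones. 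Concretely, a configuration of length $\ell$ has, for each $n \geq \ell$, exactly one preimage point in $\sA^n$ for every injection $[\ell] \hookrightarrow [n]$ --- the injection recording which coordinate sections are invertible --- and each such point is the image under the corresponding transition map of the deepest point of $\sA^\ell$. Thus all these preimages are already identified in $\varinjlim_n \sA^n$, so $c$ is injective on isomorphism classes; combined with the identification of automorphism groups from the étale step, $c$ is a monomorphism. I expect this bookkeeping of the two-fold fiber products $\sA^n \times_{\Ttoset} \sA^m$, and the verification that they are exactly assembled from the transition injections, to be the technical heart of the argument.
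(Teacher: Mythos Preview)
Your proposal is correct, and the core content --- checking \'etaleness via the identification $\Ttoset \simeq \cT$ and then verifying the map is a bijection on geometric points with matching automorphism groups --- is exactly what the paper does. The difference is one of packaging. The paper's proof is a single sentence: since the $\sA^n \to \Ttoset$ are representable and \'etale, it suffices to check that the induced map is an equivalence on geometric points, which is clear. You instead decompose this into \'etale + monomorphism + surjective and argue each piece separately, with the monomorphism step carrying the fibre-product bookkeeping you flag as the technical heart.

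Both routes land on the same verification: a length-$\ell$ configuration has automorphism group $\Gm^\ell$, and all its preimages in the various $\sA^n$ are already identified by the transition open immersions. Your decomposition is a perfectly valid repackaging, and arguably more transparent --- the paper's proof is terse enough that one of the authors left a marginal note asking for more explanation. One small caution: your appeal to ``flat monomorphism locally of finite presentation is an open immersion'' presupposes that the colimit $\varinjlim_n \sA^n$ is already known to be algebraic, or at least that $c$ is representable; you gesture at this (``the source being legitimately a stack with \'etale atlas $\coprod_n \sA^n$''), but the indexing category is not filtered, so this deserves a sentence of care. The paper sidesteps this by working directly with the representable \'etale maps $\sA^n \to \Ttoset$ rather than with $c$ itself, which is slightly cleaner.
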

\begin{proof}
Since the $\sA^n$ are all representable and \'etale over $\Ttoset$, it is enough to see that the map induces an equivalence on geometric points, which is clear.
\end{proof}

\Dan{a couple of words about why this is true?}

\makeatletter
\@addtoreset{thm}{section}
\@removefromreset{thm}{subsection}
\counterwithin{thm}{section}
\makeatother
\appendix 


\section{Balanced $\Gm$ action on twisted rational curves}
\label{sec:Gm-action}

Let $C$ be a chain of twisted rational curves corresponding to a geometric point $s$ of $\fM_{0,2}^{\tw\,\sst}$, so each of the end components has a marked point, which itself may be twisted.  If $C$ is not twisted, there is a canonical balanced action of $\Gm$ on $C$ by scaling all components of the chain simultaneously.   We show below that there is also a balanced action on a chain of twisted curves, which uniquely extends to families.  In fact, there are two canonical actions, due to the inversion automorphism of $\Gm$, so we choose the convention that the first marked point corresponds to $0$ and the second marked point to $\infty$.

\subsection*{Action of a group on a stack} Defining an action of a group on a stack can be more subtle than an action on a scheme. We avoid such complications by building the quotient stack into the definition:

\begin{definition}
  Let $G$ be an algebraic group-scheme, $BG$ its classifying stack, and $E \to BG$ the universal torsor.  An action of $G$ on a stack $X$ is the data of  a stack $\oX$, a morphism  $\oX\to BG$, and an isomorphism $X \simeq \oX \fp_{BG} E$.
\end{definition}

\subsection*{Balanced action on fibers} We first define the balanced action on fibers.

\begin{proposition} \label{prop:tw-chain-Gm}
  Let $C$ be a chain of twisted rational curves over a field with a single, possibly twisted, marking on each end of the chain.  Then there is a canonical action of $\Gm$ on $C$ which induces the canonical balanced action on the coarse moduli space.
\end{proposition}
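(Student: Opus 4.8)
The plan is to construct the action componentwise on the normalization of $C$ and then descend it across the nodes, in the form demanded by the definition of a group action on a stack given above: it suffices to produce a quotient stack $[C/\Gm]$ with a map to $\cB\Gm$ whose associated torsor (pullback along $E \to \cB\Gm$) recovers $C$. I fix the convention that the first marking of the chain is labelled $0$ and the last $\infty$, and that at each node the $\infty$-branch of one component is glued to the $0$-branch of the next; this choice removes the ambiguity coming from the inversion automorphism of $\Gm$.

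Let $\tilde C = \bigsqcup_i F_i$ be the normalization, so each $F_i$ is a smooth twisted rational curve with two special (possibly twisted) points, i.e. a root stack $F_i = \PP^1(\sqrt[r_i]{0},\sqrt[s_i]{\infty})$ of $\PP^1$ along the $\Gm$-invariant divisors $\{0\}$ and $\{\infty\}$. The first step is to lift the scaling action $t\cdot[x:y]=[x:ty]$ to each $F_i$. Writing $F_i$ as the fibre product $\PP^1 \fp_{\sA,[r_i]}\sA$ along the classifying map $(\cO(0),s_0):\PP^1\to\sA$ (and similarly at $\infty$), the key point is that this classifying map is \emph{canonically} $\Gm$-invariant: the section $s_0$ is a $\Gm$-semi-invariant, and its scaling by $t$ is exactly a gauge transformation by the very $\Gm$ one quotients by in $\sA=[\AA^1/\Gm]$. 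Hence the action on the first factor lifts canonically through the power map $[r_i]$ to a $\Gm$-action on $F_i$, and dually at $\infty$. This produces a canonical $\Gm$-action, equivalently a quotient $[F_i/\Gm]$, on every component.

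Next I would check that these actions glue. Near $\infty\in F_i$ with local coordinate $u$ one has $u\mapsto t^{-1}u$ (weight $-1$), while near $0\in F_{i+1}$ with coordinate $v$ one has $v\mapsto tv$ (weight $+1$); thus the two branches at a node carry opposite weights and the node equation $uv=0$ is $\Gm$-invariant. The band-reversing gluing isomorphism $\phi_i$ identifies the normal bundle of the $\infty$-gerbe of $F_i$ with the \emph{dual} of the normal bundle of the $0$-gerbe of $F_{i+1}$; since these carry $\Gm$-weights $-1$ and $+1$ respectively, and the dual of weight $+1$ is weight $-1$, the isomorphism $\phi_i$ intertwines the two local actions. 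Therefore the quotients $[F_i/\Gm]$ glue along the node gerbes to a quotient stack $[C/\Gm]$ over $\cB\Gm$ with $[C/\Gm]\fp_{\cB\Gm}E \simeq C$, i.e. a canonical $\Gm$-action on $C$. Coarsification is compatible with root stacks and with this gluing, and on each $\PP^1$ the action coarsens to the scaling fixing $0,\infty$; the opposite weights at the nodes show the coarse action is balanced, as required.

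The main obstacle is precisely this node compatibility in the twisted setting: in the untwisted case balancedness of the coarse action is classical, but here one must verify that the canonical lift to the root stacks is compatible with the band-reversing identifications of the node gerbes. This reduces to the weight computation above, which \emph{is} the balancedness condition; the only genuine choice is the global orientation of the chain, pinned down by the convention first-marking-$=0$. Canonicity then follows because the lift of a $\Gm$-action to a root stack along an invariant divisor is unique once the invariance $2$-isomorphism is fixed, and here that $2$-isomorphism is the canonical one scaling the defining section.
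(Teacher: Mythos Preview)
Your proposal is correct and follows essentially the same strategy as the paper: build the action on local pieces and verify compatibility at the nodes, packaging the result as a quotient stack over $\BGm$. The paper works with the affine charts $[\AA^1/\mu_r]$ rather than the full components $F_i$, and directly identifies each quotient as $\sA$ with structure map $(L,s)\mapsto L^{\otimes r}$ to $\BGm$; at a node it then observes that the two copies of $\sA$ glue along the \emph{inversion} isomorphism of $\BGm$, which is precisely your weight\,/\,band-reversing compatibility recast in modular language. Your route via lifting the $\Gm$-action through the root-stack fibre product $\PP^1\times_{\sA,[r]}\sA$ is equivalent; the paper's packaging has the minor advantage that the quotient stack $[C/\Gm]$ is made completely explicit, which is what the adopted definition of ``action on a stack'' literally asks for.
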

\begin{proof}
  We define the action of $\Gm$ on $X = [\AA^1 / \mu_r]$ and then glue.  Let $\oX = \sA$ where the map $\oX \rightarrow \BGm$ sends a line bundle and section $(L, s)$ to $L^{\tensor r}$.  Then $\oX \fp_{\BGm} E$ is the moduli space of $r$-th roots of a trivialized line bundle and section---the $r$-th root stack of $\AA^1$ at the origin---which is precisely $X$.

  We now obtain the action of $\Gm$ on twisted curves by gluing together the actions on root stacks of $\AA^1$.  Note that gluing along the open orbit of $\AA^1$ is trivial, since the quotient is a point. At a node, the quotient of
\begin{equation*}
  X = [\AA^1 / \mu_r ] \mathop{\sqcup}_{[0 / \mu_r]} [\AA^1 / \mu_r],
\end{equation*}
glued along the automorphism of $B\mu_r$ induced from the inversion map on $\mu_r$, is $\sX = \sA \sqcup_{\BGm} \sA$, glued along the \emph{inversion} isomorphism $\BGm \simeq \BGm$.  Indeed, $\sX$ may be identified as the moduli space of $(L,x,y)$ where $x$ is a section of $L$, the section $y$ is a section of $L^{\tensor (-1)}$ and $xy = 0$. The structure map $\sX \to \BGm$ sends $(L,x,y)$ to $L^{\otimes r}$.  Taking the fiber product with $E \rightarrow \BGm$ gives the same data, along with a trivialization of $L^{\tensor r}$, which is canonically isomorphic to $X$.
\end{proof}

\subsection*{Extension to families}

\begin{proposition} \label{prop:Gm-action}
  The balanced action defined above extends uniquely to every family of curves in $\fM_{0,2}^{\tw\,\sst}$.
\end{proposition}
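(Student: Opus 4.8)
The plan is to reduce to the universal family and then globalize the fiberwise construction of Proposition~\ref{prop:tw-chain-Gm}. Write $\fM = \fM_{0,2}^{\tw\,\sst}$ and let $\sC \to \fM$ be the universal twisted curve, with markings $\sigma_0$ and $\sigma_\infty$. Every family in $\fM$ is pulled back from $\sC/\fM$ along a unique map to $\fM$, and both the quotient datum $\overline{\sC}\to\BGm$ and the isomorphism $\sC \simeq \overline{\sC}\fp_{\BGm} E$ defining an action pull back; so it suffices to produce one canonical action on $\sC/\fM$ and to prove it unique there.

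To construct it, I would build the relative quotient $\overline{\sC} \to \fM$ together with its structure map $\overline{\sC} \to \BGm$ by patching the local models from the proof of Proposition~\ref{prop:tw-chain-Gm}. \'Etale locally on $\fM$ the family $\sC$ is a chain assembled from: the open orbits of the components, where $\Gm$ acts freely and the relative quotient is trivial; the root stacks $[\AA^1/\mu_r]$ at the markings and at the two branches of each node; and the nodes themselves. For a marking of order $r$ the local quotient is $\sA$ with the map $(L,s)\mapsto L^{\otimes r}$, and for a node it is $\sA\sqcup_{\BGm}\sA$ glued along the inversion isomorphism of $\BGm$ and carrying $(L,x,y)\mapsto L^{\otimes r}$, exactly as in that proof. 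These descriptions are canonical and functorial in the curve, and their gluing data match on overlaps; hence they descend to a global stack $\overline{\sC}$ over $\fM$ with a single map $\overline{\sC}\to\BGm$. Forming $\overline{\sC}\fp_{\BGm} E$ returns $\sC$ chart by chart, giving the required isomorphism, and restricting to a geometric point of $\fM$ recovers the action of Proposition~\ref{prop:tw-chain-Gm} by construction.

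For uniqueness, I would argue on each connected component of $\fM$. The components are indexed by the locally constant twisting orders at $\sigma_0$ and $\sigma_\infty$, and each contains a dense open substack $\fM^\circ$ parametrizing smooth single-component curves, namely a root stack of $\PP^1$ at its two marked points. Over $\fM^\circ$ any $\Gm$-action inducing the balanced action on the coarse space is the standard scaling, so the two candidate extensions agree there. Since $\fM$ is smooth, hence reduced, and the datum of an action is equivalently a line bundle on $\overline{\sC}$ realizing $\sC$ as its associated $\Gm$-torsor, which forms a separated moduli problem, agreement on the dense open $\fM^\circ$ propagates to all of $\fM$, and the comparison isomorphism between two extensions is itself unique.

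The main obstacle is the gluing across nodes in families: one must check that the inversion identification of $\BGm$ used to glue the two copies of $\sA$ is compatible with the single global map sending $(L,\,\cdot\,)\mapsto L^{\otimes r}$, i.e.\ that the balancing condition built into twisted curves is precisely what forces the two local structure maps to agree after inversion, and that this compatibility persists across the degenerations connecting strata of different chain length. Verifying this is the technical heart; existence on the individual pieces and the density argument for uniqueness are then routine.
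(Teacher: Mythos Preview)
Your existence argument has a genuine gap that you yourself flag but do not close. The local model at a node in a family is (\'etale locally on the base) the stack $[\Spec A[x,y]/(xy-t)\,/\,\mu_r]$, and the balanced $\Gm$-action you want is $\lambda\cdot(x,y)=(\lambda x,\lambda^{-1}y)$. The difficulty is that the coordinates $x,y$ are \emph{not} canonical: an automorphism of the local chart such as $(x,y)\mapsto (x(1+ay),\,y/(1+ay))$ preserves the equation $xy=t$ but does \emph{not} commute with the $\Gm$-action. So two overlapping charts will in general carry genuinely different $\Gm$-actions, and ``these descriptions are canonical and functorial in the curve'' is precisely what needs to be proved and is false at the level of charts. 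This is not only the compatibility at nodes in a fixed fiber (where balancing does the job, as in Proposition~\ref{prop:tw-chain-Gm}), but the compatibility as the combinatorial type varies across the family, and your sketch does not address it.

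The paper avoids this problem entirely by taking an infinitesimal route. Step~1 shows that for any square-zero extension $S\subset S'$, the set of $\Gm$-equivariant deformations of $C/S$ to $S'$ maps bijectively to the set of all deformations: both are torsors under $\Ext^1(\LL_{C_0/S_0},\cO_{C_0})$, the equivariant one under the $\Gm$-invariants, and balancing makes $\Gm$ act trivially on each summand $T_xC_1\otimes T_xC_2$; a parallel computation with $\Hom(\Omega_{C_0/S_0}(p_1+p_2),\cO_{C_0})$ handles automorphisms. This gives unique formal extensions. Step~2 algebraizes over complete local rings via \cite[Th\'eor\`eme~5.4.1]{ega-3-1}, and Step~3 uses Artin approximation to pass to arbitrary bases. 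At no point is a global gluing of local coordinate charts attempted.

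Your uniqueness argument is closer to correct, though the phrasing ``a line bundle on $\overline{\sC}$'' is circular since $\overline{\sC}$ depends on the action. A clean version is: two actions are two morphisms $\Gm\times_{\fM}\sC\rightrightarrows\sC$ over $\fM$; their equalizer is closed in the source because $\sC$ is separated over $\fM$, and the source is smooth over the ground ring (the total space of the universal twisted curve is a smooth stack), hence reduced; since the equalizer contains the preimage of the dense open $\fM^\circ$, it is everything. This works, but note that the paper gets uniqueness already at the infinitesimal level as a byproduct of Step~1, without invoking density.
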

\begin{proof}
{\sc Step 1: Infinitesimal extension.}  Suppose that $S$ is the spectrum of an artinian local ring, and $S'$ is a square-zero extension whose ideal is isomorphic to the residue field.  Let $C'$ be an $S'$-point of $\fM_{0,2}^{\tw\,\sst}$ and let $C$ be its fiber over $S$.  Assume that we have already constructed a $\Gm$ action on $C$ over $S$ whose restriction to the residue field is the balanced action described above, and that this extension is unique.  We show that it can also be extended to an action on $C'$ over $S'$.

Denote the projection by $\pi : C \rightarrow S$ and let $\pi_0 : C_0 \rightarrow S_0$ be the fiber of $\pi$ over the residue field.  The isomorphism classes of extensions of $C$ to $S'$ form a torsor under 
\begin{equation*}
  \Ext^1(\LL_{C/S}, \pi^\ast \cO_{S_0}) = \Ext^1(\LL_{C_0/S_0}, \cO_{C_0}) = \sum_x T_x C_1 \tensor T_x C_2
\end{equation*}
where the sum is taken over the nodes $x$ of $C_0$, and $C_1, C_2$  are the components of $C_0$ joined at $x$.

The isomorphism classes of equivariant extensions form a torsor under
\begin{equation*} \begin{split}
  \Ext^1(\LL_{[C/\Gm] / S \times \BGm}, \opi^\ast \cO_{S_0}) & = \Ext^1(\LL_{[C_0/\Gm] / S \times \BGm}, \cO_{\oC_0}) \\ & = \Ext^1(\LL_{C_0/S_0}, \cO_{C_0})^{\Gm} ,
\end{split}
\end{equation*}
the group of $\Gm$-invariants in $\Ext^1(\LL_{C_0/S_0}, \cO_{C_0})$.  But $\Gm$ acts trivially on $T_x C_1 \tensor T_x C_2$ since the action is balanced, so isomorphism classes of deformations and of equivariant deformations form pseudo-torsors under the same group. The same argument applied to $\Ext^2$ shows that equivariant obstructions coincide with obstructions, which vanish, so these pseudo-torsors are torsors. Therefore the map from equivariant deformations to non-equivariant deformations is a bijection on isomorphism classes.

To see that it is actually an equivalence, we must consider the automorphisms of an equivariant deformation, which can be identified with the $\Gm$-invariants in the group of all automorphisms:
\begin{equation*}
  \Hom(\Omega_{C_0/S_0}(p_1 + p_2), \cO_{C_0})^{\Gm} \subset \Hom(\Omega_{C_0/S_0}(p_1 + p_2), \cO_{C_0}) .
\end{equation*}
Since a vector field on a chain of rational curves must vanish at the nodes, an element of $\Hom(\Omega_{C_0/S_0}(p_1 + p_2), \cO_{C_0})$ corresponds to one vector field on each of the irreducible components of $C_0$, vanishing at the two special points of that component.  There is a one-dimensional space of vector fields on $\PP^1$ that vanish at $0$ and $\infty$. These vector fields lift uniquely to any twisting of $\PP^1$ and are equivariant with respect to the $\Gm$-action fixing $0$ and $\infty$.  Therefore the inclusion displayed above is an isomorphism, and every automorphism of $C'$ is equivariant.

We may deduce by induction that whenever $S$ is an infinitesimal thickening of a point and $C$ is an $S$-point of $\fM_{0,2}^{\tw\,\sst}$ there is a unique action of $\Gm$ on $C$ over $S$ extending the balanced action on the fiber.

{\sc Step 2: algebraization.}  Suppose that $C$ is an $S$-point of $\fM_{0,2}^{\tw\,\sst}$ where $S = \Spec R$ and $R$ is a complete noetherian local ring.  By the above steps, we have a balanced action of $\Gm$ on $C_n = C \fp_S S_n$ where $S_n = \Spec R / \fm^n$ and $\fm$ is the maximal ideal of $R$.  We wish to extend the action to $C$.

It is equivalent to show that for any pair of maps $T \rightarrow S$ and $t : T \rightarrow \Gm$ the formal automorphism of $C_T$ over $T$ given by $t$ can be algebraized uniquely.  Since we are proving that the algbraization is unique, we can work locally and assume $T$ is affine.  Since $\fM_{0,2}^{\tw\,\sst}$ is locally of finite presentation, we can also assume that $T$ is locally of finite presentation over $S$.  Then $T$ is noetherian and adic, so by \cite[Th\'eor\`eme~5.4.1]{ega-3-1} a formal morphism $C_T \rightarrow C_T$ can be algebraized uniquely.  This gives the algebraization.

{\sc Step 3: approximation.}  To construct the action on an arbitrary family $C \rightarrow S$, it is sufficient to construct it \'etale locally, and since $\fM_{0,2}^{\tw\,\sst}$ is locally of finite presentation, it is even enough to assume that $S$ is the spectrum of a henselian local ring $R$.  Using local finite presentation once again, we can even assume that $R$ is the henselization at a prime ideal of a ring that is of finite type over $\ZZ$.\Jonathan{removed reference to excellence}

Let $T \rightarrow S$ and $t : T \rightarrow \Gm$ be any maps, as before.  It is enough to show that we can construct the action of $t$ on $C_T$, and it is enough to do this locally.  Using the local finite presentation of $\fM_{0,2}^{\tw\,\sst}$, it is enough to assume that $T$ is the spectrum of the henselization of an algebra of finite type over $R$.  We can therefore apply Artin's approximation and find that the automorphism defined by $t$ over $\hT$ can be approximated by an automorphism over $T$.

We can therefore approximate the action of $\Gm$ on $C_S$ arbitrarily well in an \'etale neighborhood of each geometric point of $\Gm$.  Since $\Gm$ is quasi-compact, we can approximate the action of $\Gm$ on $C$ arbitrarily well on an \'etale cover of $\Gm$.  Moreover, we saw before that the infinitesimal deformation theory of this action is trivial so the approximation to the action over $C_{\hS}$ just constructed agrees to all orders with the action on $C_{\hS}$.  Therefore the approximation is unique and descends to $\Gm$ from the \'etale cover on which it was constructed above.  This completes the proof. 
\end{proof}

\subsection*{Functoriality}

\begin{proposition} \label{prop:equivariant}
  Any morphism of~$2$-marked twisted rational curves is $\Gm$-equivariant with respect to the canonical $\Gm$-action.
\end{proposition}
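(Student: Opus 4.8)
\emph{The content of the proposition is that, for a morphism $f\colon C \to C'$ of families of $2$-marked twisted rational curves over a base $S$, the two morphisms $\phi_1 = f\circ a_C$ and $\phi_2 = a_{C'}\circ(\id_{\Gm}\times f)$ from $\Gm\times_S C$ to $C'$ coincide, where $a_C$ and $a_{C'}$ are the canonical balanced actions supplied by Proposition~\ref{prop:Gm-action}.} Both $\phi_1$ and $\phi_2$ restrict to $f$ over the identity section $1\in\Gm$, so they determine two sections of the relative Hom-space $\uHom_S(C,C')$ that agree at $1$. Since $C$ is proper and flat over $S$ and $C'$ is a separated Deligne--Mumford stack over $S$, this Hom-space is a separated algebraic space locally of finite presentation over $S$ (exactly as in the proof of Proposition~\ref{prop:diagonal}), and hence the equaliser of $\phi_1$ and $\phi_2$ is represented by a closed subscheme $Z\hookrightarrow \Gm\times S$ containing the identity section. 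It remains to prove $Z = \Gm\times S$.

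First I would check equivariance on geometric fibres. Over an algebraically closed field, $C$ is a chain of twisted rational curves, and removing the two markings and all nodes leaves a disjoint union of open substacks, each of which is a single free $\Gm$-orbit isomorphic to $\Gm$ (the root-stack structure is concentrated at the special points, which are $\Gm$-fixed). A marking-preserving morphism $f$ sends markings to markings and nodes to $\Gm$-fixed points, so on each open orbit $f$ is either a $\Gm$-equivariant isomorphism onto an open orbit of $C'$---the marking convention fixing $0$ and $\infty$ rules out the inversion, leaving the scalings $t\mapsto \lambda t$---or the constant map to a fixed point. In either case $f$ is $\Gm$-equivariant on a schematically dense open substack of the fibre, and since $C'$ is separated this forces $\phi_1 = \phi_2$ on the whole fibre. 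Thus $Z$ contains every geometric point of $\Gm\times S$.

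Finally I would propagate this to arbitrary families. When $S$ is reduced the previous step already gives $Z = \Gm\times S$; in general one runs the infinitesimal--algebraisation--approximation argument of Proposition~\ref{prop:Gm-action} verbatim. The essential input, established there, is that $\Gm$ acts trivially on the deformation space $\Ext^1(\LL_{C_0/S_0},\cO_{C_0})$ and on the space of infinitesimal automorphisms $\Hom(\Omega_{C_0/S_0}(p_1+p_2),\cO_{C_0})$ of a chain of rational curves; consequently the equivariant and the ordinary deformations of the morphism $f$ are governed by the same groups, so the fibrewise equality of $\phi_1$ and $\phi_2$ lifts uniquely over square-zero thickenings, algebraises over complete local rings by \cite[Th\'eor\`eme~5.4.1]{ega-3-1}, and descends after Artin approximation. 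I expect this scheme-theoretic propagation over non-reduced bases to be the main obstacle, precisely because it is where one must match equivariant against non-equivariant deformations of $f$; this is exactly the point at which the triviality of the $\Gm$-action on the relevant cohomology, already exploited in Proposition~\ref{prop:Gm-action}, is indispensable.
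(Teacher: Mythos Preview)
Your overall strategy—set up the equaliser, show it is closed, verify it on geometric fibres, then propagate—matches the paper's, but you are considerably more careful at the last step.  The paper's proof is much terser: it fixes $\lambda$, observes that the equaliser of $\lambda\circ f$ and $f\circ\lambda$ in $C_1$ is closed (since $C_2$ is separated), that its complement has open image in $S$ (since $C_1\to S$ is flat), and that this image is empty because the maps agree on every geometric fibre; hence ``the locus in $S$ where $f\circ\lambda=\lambda\circ f$'' is all of $S$.  Read literally this gives only the \emph{topological} statement $|E|=|C_1|$, which forces $E=C_1$ when $C_1$ is reduced (equivalently when $S$ is reduced, since $C_1\to S$ is flat with reduced fibres), but not otherwise.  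Your explicit step ``when $S$ is reduced the previous step already gives $Z=\Gm\times S$'' is exactly the observation that completes the paper's argument in the reduced case.

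For non-reduced $S$ you are right that additional input is needed, and invoking the deformation-theoretic machinery of Proposition~\ref{prop:Gm-action} is a reasonable way to supply it.  The relevant claim—that $\Gm$ acts trivially on $\Hom(f_0^*\Omega_{C'_0/S_0}(p_1+p_2),\cO_{C_0})$—does hold, for the same reason it held for automorphisms: on each component the only sections are multiples of the weight-zero vector field $t\,\partial_t$.  A lighter alternative, sufficient for the applications in Propositions~\ref{Prop:basic-pair-isom} and~\ref{Prop:basic-deg-isom}, is to note that those particular morphisms (contractions to $\PP^1$ or to $X$) are pulled back from a universal family over a smooth, hence reduced, Hom-stack, so the reduced case already suffices.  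Your more careful formulation is what the proposition \emph{as stated} requires; the paper's short argument is adequate for its intended uses but leaves the non-reduced case implicit.
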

\begin{proof}
Suppose $f : C_1 \rightarrow C_2$ is a morphism of $2$-marked rational curves over $S$ and $\lambda$ is a point of $\Gm$.  The locus of $x \in C_1$ where $\lambda . f(x) = f(\lambda . x)$ is closed since $C_2$ is separated over $S$.  Its complement is therefore open in $C_1$ and has an open image in $S$ since $C_1$ is flat over $S$.  Therefore the locus in $S$ where $f \circ \lambda = \lambda \circ f$ is closed.  But this locus clearly contains all of the geometric points of $S$, so it must be all of $S$. 
\end{proof}

\bibliographystyle{amsalpha}
\bibliography{ACFW}

\end{document}